\definecolor{light-gray1}{gray}{0.90}
\definecolor{light-gray2}{gray}{0.80}
\definecolor{deepgreen}{cmyk}{1,0,1,0.5}
\newcommand{\B}{\mathcal{B}}
\newcommand{\E}{\mathcal{E}}
\newcommand{\HH}{\mathcal{H}}
\newcommand{\cS}{\mathcal{S}}
\newcommand{\N}{\mathbb{N}}
\newcommand{\R}{\mathbb{R}}
\newcommand{\Sp}{\mathbb{S}}
\newcommand{\Z}{\mathbb{Z}}
\newcommand{\al}{\alpha}
\newcommand{\be}{\beta}
\newcommand{\de}{\delta}
\newcommand{\e}{\varepsilon}
\newcommand{\om}{\omega}
\newcommand{\la}{\lambda}
\newcommand{\te}{\theta}
\newcommand{\na}{\nabla}
\newcommand{\supp}{\operatorname{supp}}
\newcommand{\Rmnum}[1]{\expandafter\@slowromancap\romannumeral #1@}
\newcommand{\I}{\infty}
\newcommand{\ba}{\overline}
\newcommand{\ang}[1]{\left\langle{#1}\right\rangle}
\newcommand{\abs}[1]{\left\lvert{#1}\right\rvert}
\newcommand{\ds}{\displaystyle}
\newcommand{\ant}[1]{\begin{align*}\begin{split} #1 \end{split}\end{align*}}
\newcommand{\EQ}[1]{\begin{equation}\begin{split} #1 \end{split}\end{equation}}
\newcommand{\Del}[1]{}
\numberwithin{equation}{section}
\newtheorem{thm}{Theorem}[section]
\newtheorem{cor}[thm]{Corollary}
\newtheorem{lem}[thm]{Lemma}
\newtheorem{prop}[thm]{Proposition}
\newtheorem{claim}[thm]{Claim}
\theoremstyle{remark}
\newtheorem{rem}{Remark}
\newtheorem{defn}{Definition}
\newcommand{\mif}{{\ \ \text{if} \ \ }}
\newcommand{\mfor}{{\ \ \text{for} \ \ }}
\newcommand{\mas}{{\ \ \text{as} \ \ }}
\renewcommand\Re{\mathrm{Re}\,}
\def\glei{\mathrm{eq}}
\begin{document}

\title[Scattering for the cubic wave equation]{Scattering for the radial $3d$ cubic wave equation}
\author{Benjamin Dodson and Andrew Lawrie}
\begin{abstract} Consider the Cauchy problem for the radial cubic wave equation in $1+3$ dimensions with either the focusing or defocusing sign. This problem is critical in $\dot{H}^{\frac{1}{2}} \times \dot{H}^{-\frac{1}{2}}(\R^3)$ and subcritical with respect to the conserved energy. Here we prove that if the critical norm of a solution remains bounded on the maximal time-interval of existence, then the solution must in fact be global-in-time and scatter to free waves as $t \to \pm \infty$. 

\end{abstract}

\thanks{Support of the National Science Foundation, DMS-1103914  for the first author, and DMS-1302782 for the second author, is gratefully acknowledged.}


\maketitle

\section{Introduction} 
Consider the Cauchy problem for the cubic  semi-linear wave equation in $\R^{1+3}$, namely, 
\EQ{\label{u gen eq}
&u_{tt}- \Delta u  +   \mu u^3  = 0,\\
&\vec u(0) = (u_0, u_1), 
}
restricted to the radial setting and with $\mu \in \{  \pm 1\}$.  The case $\mu = 1$ yields what is referred to as the defocusing problem since here the conserved energy, 
\EQ{ \label{def foc}
E( \vec u)(t) := \int_{\R^3}  \left[\frac{1}{2}( \abs{u_t(t)}^2 + \abs{\nabla u(t)}^2)  + \frac{1}{4} \abs{u(t)}^4\right] \, dx = \textrm{constant},  
}
is positive for sufficiently regular  non-zero solutions, and the $\dot{H}^1 \times L^2( \R^3)$ norm of a solution, $$\vec u(t) := (u(t), u_t(t)),$$ is bounded by its energy. 

The case $\mu = -1$ gives the focusing problem and the conserved energy for sufficiently regular solutions to~\eqref{u gen eq} is given by 
\EQ{ \label{foc}
E( \vec u)(t) := \int_{\R^3}  \left[\frac{1}{2}( \abs{u_t(t)}^2 + \abs{\nabla u(t)}^2)  - \frac{1}{4} \abs{u(t)}^4\right] \, dx = \textrm{constant}.
}
As we will only be considering radial solutions to~\eqref{u gen eq}, we will often slightly abuse notation by writing $u(t, x) = u(t, r)$ where here  $(r, \om)$, with $r= \abs{x}$, $x= r  \om$,  $\om \in  \Sp^2$, are polar coordinates on $\R^3$. In this setting we can rewrite the Cauchy problem~\eqref{u gen eq} as
\EQ{ \label{u eq}
&u_{tt}- u_{rr}- \frac{2}{r} u_r  \pm u^3 = 0,\\
&\vec u(0) = (u_0, u_1),
}
and the conserved energy (up to a constant multiple) by
 \EQ{\label{E}
E( \vec u)(t) = \int_0^{\infty} \left[ \frac{1}{2}( u_t^2(t) + u_r^2(t))  \pm \frac{1}{4} u^4(t) \right] \, r^2 \, dr.
}
The Cauchy problem~\eqref{u eq} is invariant under the scaling
\EQ{
\vec u(t, r) \mapsto   \vec u_{\la}(t, r) := (\la^{-1} u( t/ \la, r/ \la), \la^{-2} u_t( t/ \la, r/ \la)).
}
One can also check that this scaling leaves unchanged the $\dot{H}^{\frac{1}{2}} \times \dot{H}^{-\frac{1}{2}}$-norm of the solution. It is for this reason that~\eqref{u eq} is called {\em energy-subcritical}.  It is natural to consider the Cauchy problem with initial data $(u_0, u_1) \in\dot{H}^{\frac{1}{2}} \times \dot{H}^{-\frac{1}{2}}$. We remark that~\eqref{u eq} is also invariant under conformal inversion, 
\EQ{
u(t, r) \mapsto \frac{1}{t^2-r^2} u \left(\frac{t}{t^2-r^2}, \, \frac{r}{t^2-r^2} \right).
} 

A standard argument based on Strichartz estimates shows that both the de-focusing and focusing problems are locally well-posed in $\dot{H}^{\frac{1}{2}} \times \dot{H}^{-\frac{1}{2}}(\R^3)$. This  means that for all initial data $\vec u(0) =(u_0, u_1) \in \dot{H}^{\frac{1}{2}} \times \dot{H}^{-\frac{1}{2}}$, there is a unique solution $\vec u(t)$ defined on a maximal interval of existence $I_{\max}$ with $\vec u(t)  \in C(I_{\max} ; \dot{H}^{\frac{1}{2}} \times \dot{H}^{-\frac{1}{2}})$. Moreover, for every compact time interval $J \subset I_{\max}$ we have $u \in S(J):= L^{4}_t(J; L^4_x)$. The Strichartz norm $S(J)$ determines a criteria for both scattering and finite time blow up and we  make these statements precise in Proposition~\ref{small data}. Here we note that in particular one can show that if the initial data $\vec u(0)$ has sufficiently  small  $ \dot{H}^{\frac{1}{2}} \times \dot{H}^{-\frac{1}{2}}$-norm, then the corresponding solution $\vec u(t)$ has finite $S(\R)$-norm and hence scatters to free waves as $t \to \pm \infty$. 

The theory for solutions to~\eqref{u eq} with initial data that is small in $\dot{H}^{\frac{1}{2}} \times \dot{H}^{-\frac{1}{2}}$ is thus very well understood -- all solutions are global-in-time and scatter to free waves as $t \to \pm \infty$. However, much less is known regarding the  \emph{asymptotic dynamics} of solutions to either the defocusing or focusing problems once one leaves the perturbative regime.


It is well known that there are solutions to the focusing problem that blow-up in finite time. To give an example,  
\EQ{ \label{phi}
\phi_T(t, r) = \frac{ \sqrt{2}}{T-t}
}
solves the ODE,  $\phi_{tt} = \phi^3$. Using finite speed of propagation, one can construct from $\phi_T$ a compactly supported (in space) self-similar blow-up solution to~\eqref{u eq}. Indeed define  $\vec{u}_T(t)$, to be the solution to~\eqref{u eq} with initial data $\vec u_T(0, x) = \chi_{2T}(x)  \sqrt{2}$, where $\chi_{2T} \in C_0^{\infty}(\R^3)$, satisfies $\chi_{2T}(x) =1$ if $\abs{x} \le 2T$. Then $\vec u_T(t) = \phi_T(t)$ for all $r \le T$ and $0 \le t < T$ and    blows up at time $t=T$. However, such a self-similar solution  must have its  critical $\dot{H}^{\frac{1}{2}} \times \dot{H}^{-\frac{1}{2}}$--norm tending to $\infty$ as $t \to T$, i.e., 
\ant{
\lim_{t \to T} \| \vec{u}_T(t)\|_{\dot{H}^{\frac{1}{2}} \times \dot{H}^{-\frac{1}{2}}} =  \infty.
}
Indeed, one can show by a direct computation that the $L^{3}(\R^3)$ norm of $\vec u_T(t, x)$ tends to $\infty$ as $t \to T_+$. Since $\dot{H}^{\frac{1}{2}}  \subset L^3$, this means that the $\dot{H}^{\frac{1}{2}} \times \dot{H}^{-\frac{1}{2}}$ norm must blow up as well.  Such behavior is typically referred to as type-I, or ODE blow-up. 

One the other hand, type-II solutions, $\vec u(t)$, are those whose critical norm remains bounded on their maximal interval of existence, $I_{\max}$, i.e., 
\EQ{ \label{type 2}
\sup_{t  \in I_{\max}} \| \vec{u}(t)\|_{\dot{H}^{\frac{1}{2}} \times \dot{H}^{-\frac{1}{2}}} <  \infty.
}
In this paper we restrict our attention to type-II solutions, i.e., those which satisfy~\eqref{type 2}. We prove that if a solution  $\vec u(t)$ to~\eqref{u eq} satisfies~\eqref{type 2}, then $\vec u(t)$  must in fact exist globally-in-time and scatter to free waves in both time directions. 
To be precise, we establish the following result. 

\begin{thm}\label{main}
 Let $\vec u(t) \in \dot{H}^{\frac{1}{2}} \times \dot{H}^{-\frac{1}{2}}(\R^3)$ be a radial solution to~\eqref{u eq} defined on its maximal interval of existence $I_{\max}=(T_-, T_+)$. Suppose in addition that  
\begin{equation}\label{1.3}
\sup_{t \in I_{\max}} \| \vec u(t) \|_{\dot{H}^{1/2} \times \dot{H}^{-1/2}(\R^{3})} < \infty.
\end{equation}
Then,  $I_{\max} = \R$, i.e., $\vec u(t)$ is defined globally in time. Moreover,  
\begin{equation}\label{1.4}
\| u \|_{L_{t,x}^{4}( \R^{1+3})} <  \infty, 
\end{equation}
which means that $\vec u(t)$ scatters to a free wave in both time directions, i.e., there exist  radial solutions $\vec u_{L}^{\pm}(t)  \in \dot{H}^{\frac{1}{2}} \times \dot{H}^{- \frac{1}{2}}(\R^3)$ to the free wave equation, $\Box u_L^{\pm} = 0$,  so that 
\EQ{
\| \vec u(t)- \vec u^{\pm}_L(t) \|_{\dot{H}^{\frac{1}{2}} \times \dot{H}^{- \frac{1}{2}}(\R^3)} \longrightarrow 0 \mas t \rightarrow + \infty
}
\end{thm}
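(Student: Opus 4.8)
The plan is to run the Kenig--Merle concentration--compactness/rigidity scheme, adapted to this energy-subcritical setting. First I would fix the critical space $\mathcal H := \dot H^{1/2}\times\dot H^{-1/2}(\R^3)$ and argue by contradiction: if Theorem~\ref{main} fails, then the supremum
\EQ{
A := \sup\Big\{ \sup_{t\in I_{\max}}\|\vec u(t)\|_{\mathcal H}^2 \ :\ \vec u \text{ a radial solution of \eqref{u eq} with } \|u\|_{L^4_{t,x}}=\infty \Big\}
}
is finite (by the small-data theory of Proposition~\ref{small data}, $A$ is bounded below away from $0$) and is attained by a nonzero ``critical element'' $\vec u_*$. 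Extracting this minimal blow-up solution requires a profile decomposition in $\mathcal H$ for sequences of radial solutions with bounded critical norm, together with the attendant nonlinear perturbation/stability theory for \eqref{u eq} in $L^4_{t,x}$; one then shows that exactly one profile survives and that the critical element has a \emph{pre-compact} trajectory $\{\vec u_*(t): t\in I_{\max}\}$ in $\mathcal H$, modulo the scaling symmetry, i.e.\ there is a continuous $\lambda(t)>0$ with $\{(\lambda(t)^{-1}u_*(t,\cdot/\lambda(t)),\ \lambda(t)^{-2}u_{*,t}(t,\cdot/\lambda(t)))\}$ precompact. (Radiality removes the spatial-translation parameter, which is the main simplification over the general case.)

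Next I would rule out the finite-time blow-up scenario for the critical element. Suppose $T_+<\infty$. Finite speed of propagation plus the compactness of the trajectory forces the solution to concentrate at a point, and one shows $\supp \vec u_*(t)\subset\{|x|\le T_+-t\}$ after recentering, with $\lambda(t)\to 0$; a vanishing-of-energy-at-the-tip argument (as in Kenig--Merle, using that $\dot H^{1/2}$ controls $L^3$ and hence the subcritical energy flux into the light cone goes to zero) shows $\vec u_*\equiv 0$, a contradiction. This is where the energy, although supercritical relative to the scaling, is actually helpful: it is conserved and finite, so the self-similar ODE blow-up $\phi_T$ is excluded precisely because its critical norm is infinite, consistent with \eqref{type 2}.

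The remaining, and genuinely hard, case is the global solution with compact (mod scaling) trajectory — the soliton-like scenario. Here the rigidity argument must exploit a monotonicity formula. The natural device is a virial/Morawetz-type identity for the radial cubic wave equation: testing the equation against a truncated version of $r u_r + u$ (the generator of the scaling symmetry) produces a quantity whose time derivative has a definite sign up to controllable error terms supported where the cutoff varies, and the compactness of the trajectory is exactly what lets one absorb those errors over long time intervals. Integrating over $[0,T]$ and letting $T\to\infty$, combined with the finiteness of $\sup_t\|\vec u(t)\|_{\mathcal H}$ and (for the focusing case) a careful sign analysis using that the energy is at a subcritical scaling, should force $\vec u_*\equiv 0$. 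I expect \textbf{this rigidity step to be the main obstacle}: the conserved energy lives at the ``wrong'' (supercritical) scaling relative to the critical norm, so the usual Kenig--Merle coercivity via a sharp Sobolev/Gagliardo--Nirenberg inequality at the critical regularity is not directly available, and one must instead build a Morawetz estimate that is adapted to the $\dot H^{1/2}$ scaling (this is presumably where the conformal invariance noted in the introduction, or a clever choice of multiplier interpolating between the energy and the scaling vector field, enters). A secondary technical point worth flagging is that the profile decomposition and stability theory must be carried out in $\mathcal H = \dot H^{1/2}\times\dot H^{-1/2}$ rather than at the energy level, which requires the full strength of the radial Strichartz estimates and fractional Leibniz/chain-rule estimates for the cubic nonlinearity; these are routine but must be set up carefully so that the $L^4_{t,x}$ norm is genuinely the scattering-controlling quantity as asserted in Proposition~\ref{small data}.
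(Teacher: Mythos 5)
There is a genuine gap, and it is precisely the issue the paper regards as the main difficulty: your argument repeatedly uses the conserved energy of the critical element as if it were finite, but the critical element is constructed only in $\dot H^{1/2}\times\dot H^{-1/2}$, and the cubic equation is energy-\emph{subcritical}, so $E(\vec u)$ (which requires $\dot H^1\times L^2$ and $L^4$ control) is not a priori defined for it. This invalidates both your blow-up exclusion (``it is conserved and finite'') and your rigidity step as written: the truncated virial identity with multiplier $\chi_R(u+ru_r)$ cannot even be paired against $u_t$ at this regularity, and the error terms you plan to absorb by compactness are not finite. The paper's central contribution (Section~4, Theorem~\ref{t4.1}) is exactly the missing ingredient: using the weak vanishing of the linear part of the evolution (Lemma~\ref{lem weak}), the ``double Duhamel'' pairing of the forward- and backward-in-time Duhamel integrals, sharp Huygens/radial Sobolev estimates to make the product of the two integrals summable, and a frequency-envelope bootstrap, one proves $\|\vec u(t)\|_{\dot H^1\times L^2}\lesssim N(t)^{1/2}$, and in the soliton-like case even a uniform $\dot H^2\times\dot H^1$ bound giving pre-compactness in $\dot H^1\times L^2$. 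Only after this upgrade do the energy and virial arguments make sense. Your proposal never supplies (or even identifies) this regularity step, and your suggested substitute --- a Morawetz estimate ``adapted to the $\dot H^{1/2}$ scaling'' via conformal invariance --- is speculative and not what is needed.

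Two further points of comparison. First, the finite-time blow-up scenario is not handled in the paper by a light-cone/energy-flux argument (which again would presuppose finite energy); instead, standard scaling and subsequence reductions after Proposition~\ref{crit element} let one assume $T_+=+\infty$ and $N(t)\le 1$, leaving only two cases: $N(t)\equiv 1$ or $\limsup_{t\to\infty}N(t)=0$. The cascade case dies immediately from the regularity bound, since $\|\vec u(t)\|_{\dot H^1\times L^2}\lesssim N(t)^{1/2}\to 0$ forces $E(\vec u)=0$, and the focusing case is finished by the Killip--Stovall--Visan result that nonpositive energy implies blow-up or vanishing. Second, the rigidity step you flag as the main obstacle is in fact routine once regularity is available: for $p\le 3$ the averaged truncated virial yields $E(\vec u)\le 0$ directly (at $p=3$ the potential term has coefficient zero), and one concludes with positivity of the defocusing energy or again with the negative-energy blow-up criterion; no new coercivity at critical regularity is required.
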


\begin{rem}
Theorem~\ref{main} is a conditional result. Other than the requirement that the initial data be small in $\dot H^{\frac{1}{2}} \times \dot H^{-\frac{1}{2}}$, there is no known general criterion 
that ensures that~\eqref{1.3} is satisfied by the evolution for either the defocusing or the focusing equation. 
While the methods in this paper apply equally well to both the focusing and defocusing equations one should expect drastically different behavior from generic initial data in these two cases. 

\end{rem}

\begin{rem} The proof of Theorem~\ref{main} readily generalizes to all subcritical powers $p \le 3$ for which there is a satisfactory small data/local well-posedness theory. In particular, the methods presented here allow one to deduce the exact analog of Theorem~\ref{main} for radial  equations~\eqref{semi lin} for all powers $p$ with $1+ \sqrt{2} < p \le 3$ -- here $1+ \sqrt{2}$ is the F. John exponent, see \cite{John, Sch85}. We have chosen to present the details for only the cubic equation to keep the exposition as simple as possible.  We also remark that only the material in Section $4$ relies on the assumption of radiality.  
\end{rem} 

\subsection{History of the problem} 

The cubic wave equation on $\R^{1+ 3}$ has been extensively studied and we certainly cannot give a complete account of the vast body of literature devoted to this problem. 

For the defocusing equation, the positivity of the conserved energy can be used to extend a local existence result to a global one if one  begins  with initial data that is sufficiently regular. In~\cite{Jorg}, Jorgens showed global existence for the defocusing equation for smooth compactly supported data. There has been a good deal of recent work extending the local existence result of Lindblad and Sogge, \cite{LinS}, in $H^s \times H^{s-1}$ for $s > 1/2$ to an unconditional global well-posedness result and we refer the reader to \cite{KPV00, GP03, BC06, Roy09} and the references therein for details. However, since these works are not carried out in the scaling critical space, the issue of global dynamics, and in particular scattering, is not addressed. 

For the focusing equation, type-II finite time blow-up has recently been ruled out for initial data that lies in $\dot{H}^1 \times L^2$ in the work of Killip, Stovall and Visan, ~\cite{KSV}.  There are several works that open up interesting lines of inquiry related to the question of asymptotic  dynamics. In two remarkable works, Merle and Zaag~\cite{MZ03ajm, MZ05ma} determined that all blow-up solutions must blow-up \emph{at the self-similar rate}.  In the radial case, an infinite family of smooth self-similar solutions is constructed  by Bizo\'n et al. in~\cite{BBMW10}. In~\cite{BZ09}, Bizo\'n and  Zengino\u{g}lu give numerical evidence to support a conjecture that a  two parameter family  of solutions, obtained via time translation and conformal inversion of a self-similar solution, serves as a global attractor for  a large set of initial data.  In fact, Donninger and Sch\"orkhuber \cite{DS12} showed that the blow-up profile~\eqref{phi} is stable under small perturbations in the energy topology.

 Equations of the form 
\EQ{ \label{semi lin}
\Box u  =  \pm \abs{u}^{p-1} u
} 
for different values of $p$ and for different dimensions have also been extensively studied. For $d=3$, the energy critical power, $p=5$,  exhibits quite different phenomena than both the subcritical and supercritical equations. Global existence and scattering for all finite energy data was proved by Struwe,~\cite{Struwe88}, for the radial defocusing equation and by Grillakis,~\cite{Gri90}, in the nonradial, defocusing case. 

For the focusing energy critical equation, type-II blow up can occur,  as explicitly demonstrated by Krieger, Schlag, and Tataru~\cite{KST3}, via an energy concentration scenario resulting in the bubbling  off of the ground state solution, $W$, for the underlying elliptic equation; see also \cite{KS12, DHKS, DK}. 

In~\cite{KM08}, Kenig and Merle initiated a powerful program of attack for semilinear equations~\eqref{semi lin}  with the concentration compactness/rigidity method, giving a characterization of possible dynamics for solutions with energy below the threshold energy of the ground state elliptic solution. The subsequent work of Duyckaerts, Kenig, and Merle \cite{DKM1,DKM3, DKM2, DKM4} resulted in a classification of possible dynamics for large energies. In particular, all type-II radial solutions asymptotically resolve into a sum of rescaled solitons  plus a radiation term at their maximal time of existence. Dynamics at the threshold energy of $W$ have been examined by Duyckaerts and Merle~\cite{DM} and above the threshold by Krieger, Nakanishi, and Schlag in~\cite{KNS13AJM, KNS13DCDS, KNS14CMP}. 

Analogues of Theorem~\ref{main} have been established for radial equations with different powers in  $3$ dimensions. Shen proved the exact analog of Theorem~\ref{main} for subcritical powers $3<p<5$ in~\cite{Shen}, and  Kenig, Merle \cite{KM11a}, and then Duyckaerts, Kenig, Merle~\cite{DKM5} established the analog of Theorem~\ref{main} for all supercritical powers $p>5$. 
 Here we address type-II behavior in the remainder of the subcritical range for the radial equation, $1+ \sqrt{2} < p \le 3$. While we focus on the cubic equation, our proof readily generalizes to other subcritical powers. The extra regularity for critical elements proved in Section~\ref{dd} gives an extension and simplification of the argument in~\cite{Shen} which allows us to  treat  the cubic equation and below.  

Leaving the setting of type-II solutions, Krieger and Schlag, \cite{KS14}, have very recently constructed a family of solutions to the supercritical equation, $p>5$, which are smooth, global in time, have infinite critical norm and are stable under small perturbations.  

\subsection{Outline of the the proof of Theorem~\ref{main}}
The proof of Theorem~\ref{main} follows the concentration compactness/ rigidity method developed in~\cite{KM06, KM08}. The proof follows a contradiction argument -- if Theorem~\ref{main} were not true, the linear and nonlinear profile decompositions of Bahouri-G\'erard allow one to construct a minimal solution to~\eqref{u eq}, called the critical element,  which does not scatter -- here the minimality refers to the size of the norm in~\eqref{1.3}. This construction, which is by now standard in the field and  is outlined in Section~\ref{con comp}, yields a critical element whose trajectory in the space $\dot{H}^{\frac{1}{2}} \times \dot{H}^{-\frac{1}{2}}$ is pre-compact up to modulation. The goal is then to prove that this compactness property is too rigid of a property for a nonzero solution and thus the critical element cannot exist.  

A significant hurdle in the way of ruling out a critical element $\vec u_{c}(t)$ for the cubic equation (or any subcritical equation)  lies with the fact that  $\vec u_{c}(t)$ is constructed  in the space $\dot{H}^{\frac{1}{2}} \times \dot{H}^{-\frac{1}{2}}$  and thus useful global monotone quantities that require more regularity, such as the conserved energy and virial type identities,  are not, a priori,  well defined. In general, solutions to the cubic wave equation are only as regular as their initial data as evidenced by the presence of the free propagator $S(t)$ in the Duhamel representation for the solution 
\EQ{\label{duh}
\vec u_c(t_0)  = S(t_0-t) \vec u_c(t)  + \int_{t}^{t_0} S(t_0-s) (0, \pm u^3) \, ds.
}
The critical element is rescued by the fact that the pre-compactness of its trajectory is at odds with the dispersive properties of the free part, $S(t_0-t) \vec u(t)$, and thus the first term on the right-hand-side above is forced to vanish weakly as $t  \to \sup I_{\max} $ and as $t \to \inf I_{\max}$. The second term on the right-hand-side of~\eqref{duh} thus encodes the regularity of the critical element and a gain can be expected due to the presence of the cubic term. The additional regularity is extracted by way of the ``double Duhamel trick," which refers to the consideration of the pairing of 
\ant{
\ang{ \int_{T_1}^{t_0} S(t_0-s) (0, \pm u^3) \, ds, \int^{T_2}_{t_0} S(t_0-\tau) (0, \pm u^3) \, d\tau }_{\dot{H}^1 \times L^2}
} 
where $T_1<t_0$ and $T_2>t_0$.  This technique was developed by Tao in~\cite{Tao07} and utilized in the Kenig-Merle framework for nonlinear Schr{\"o}dinger problems by Killip and Visan~\cite{KV10CPDE, KV10AJM, KVClay},  and for semilinear wave equations  in~\cite{KV11TAMS, Bul12a, Bul12b}. This method is also closely related to the in/out decomposition used by Killip, Tao, and  Visan in~\cite[Section~$6$]{KTV09}. For more details on how to exploit the different time directions above we refer the reader to Section~\ref{dd} and in particular to the proof of Theorem~\ref{t4.1}.

 Indeed we bound the critical element  in $\dot{H}^1 \times L^2$. We then use the conserved energy to rule out a critical element which fails to be compact by a low frequency concentration as such a solution would have vanishing energy, see Section~\ref{cascade}. One is then left with  a critical element that is global-in-time and evolves at a fixed scale. In Section~\ref{sec rigid} we prove that such a solution cannot exist by way of a virial identity. We note that this virial based rigidity argument works for pre-compact solutions to ~\eqref{semi lin} with powers $p  \le 3$, but fails to produce useful estimates for powers $3 <p<5$. However, in this range one can use the ``channels of energy" method pioneered in~\cite{DKM4, DKM5}, see~\cite{Shen}.  For more on this, see Remark~\ref{r:vir}.

\section{Preliminaries}
\subsection{Harmonic analysis}
In what follows we will denote by $P_k$  the usual Littlewood-Paley projections onto frequencies of size $ \abs{ \xi} \simeq 2^k$ and by $P_{ \le k}$ the projection onto frequencies $ \abs{ \xi} \lesssim 2^{k}$. These projections satisfy Bernstein's inequalities.  
\begin{lem}[Bernstein's inequalities]\cite[Appendix A]{Taobook} \label{lem bern} Let $1 \le p \le q \le \infty$ and $s \ge 0$. Let $ f: \R^d \to \R$. Then 
\EQ{ \label{bern}
&\|P_{\ge N} f\|_{L^p} \lesssim N^{-s} \| \abs{\na}^s P_{\ge N} f\|_{L^p},\\
&\|P_{\le N} \abs{\na}^s f\|_{L^p} \lesssim N^{s} \|  P_{\le N} f\|_{L^p}, \, \quad
\|P_{ N} \abs{\na}^{\pm s} f\|_{L^p} \simeq N^{ \pm s} \|  P_{ N} f\|_{L^p}\\
&\|P_{\le N} f\|_{L^q} \lesssim N^{\frac{d}{p}- \frac{d}{q}} \| P_{\le N} f\|_{L^p}, \, \quad
\|P_{ N} f\|_{L^q} \lesssim N^{\frac{d}{p}- \frac{d}{q}} \| P_{N} f\|_{L^p}.
}
\end{lem}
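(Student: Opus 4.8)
The plan is to reduce every inequality in~\eqref{bern} to one elementary fact about Fourier multipliers localized at a fixed dyadic scale, and then to propagate it through the various cases by inserting redundant (fattened) Littlewood--Paley projections and, where a derivative acts at low frequency, by a dyadic decomposition. Throughout, the case $s = 0$ is trivial, so assume $s > 0$. Write $P_N = m_\psi(\nabla/N)$, $P_{\le N} = m_\varphi(\nabla/N)$, $P_{\ge N} = (1 - m_\varphi)(\nabla/N)$ for the Fourier multipliers with symbols $m_\psi(\xi/N)$, etc., where $m_\psi \in C_c^\infty(\R^d)$ is supported in an annulus $\{|\xi| \simeq 1\}$ and $m_\varphi \in C_c^\infty(\R^d)$ is $\equiv 1$ near the origin; fix also fattened companions $\widetilde P_N$, $\widetilde P_{\le N}$, $\widetilde P_{\ge N}$ of the same type with $\widetilde P_N P_N = P_N$, $\widetilde P_{\le N} P_{\le N} = P_{\le N}$, $\widetilde P_{\ge N} P_{\ge N} = P_{\ge N}$. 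The fact is: if $m \in \mathcal{S}(\R^d)$ then $m(\nabla/N)$ is convolution against $N^d \check m(N\,\cdot\,)$, so Young's inequality gives, for $1 \le p \le q \le \infty$ with $\tfrac1r = 1 - \tfrac1p + \tfrac1q$,
\[
\| m(\nabla/N) f \|_{L^q} \;\le\; \| N^d \check m(N\,\cdot\,) \|_{L^r}\, \| f \|_{L^p} \;=\; N^{\frac{d}{p} - \frac{d}{q}}\, \| \check m \|_{L^r}\, \| f \|_{L^p};
\]
in particular (take $p = q$) $m(\nabla/N)$ is bounded on every $L^p$ uniformly in $N$, and it maps $L^p \to L^q$ with norm $O(N^{d/p - d/q})$.

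Granting this, the last line of~\eqref{bern} is immediate: apply the display to $m = m_\psi$ (resp. $m = m_\varphi$), and to recast the right side in terms of $\|P_N f\|_{L^p}$ (resp. $\|P_{\le N} f\|_{L^p}$), rerun it on $g := P_N f = \widetilde P_N g$ with the fattened symbol. For a single annulus, the symbol of $|\nabla|^s P_N$ is $|\xi|^s m_\psi(\xi/N) = N^s b(\xi/N)$ with $b(\zeta) := |\zeta|^s m_\psi(\zeta) \in C_c^\infty(\R^d)$, since $m_\psi$ is supported away from the origin; the display (applied to $b$, and to $|\zeta|^{-s}\widetilde m_\psi(\zeta) \in C_c^\infty$ for the converse) yields $\|P_N |\nabla|^{\pm s} f\|_{L^p} \simeq N^{\pm s}\|P_N f\|_{L^p}$, which is the third estimate in~\eqref{bern}. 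For $\|P_{\le N} |\nabla|^s f\|_{L^p} \lesssim N^s \|P_{\le N} f\|_{L^p}$, write $P_{\le N} = \sum_{M \lesssim N} P_M$ over dyadic $M$, bound each $\||\nabla|^s P_M f\|_{L^p} \lesssim M^s \|P_M f\|_{L^p} \lesssim M^s \|P_{\le N} f\|_{L^p}$ using the annulus case together with the uniform $L^p$-boundedness of the fattened projections, and sum the geometric series $\sum_{M \lesssim N} M^s \lesssim N^s$. Symmetrically, for $\|P_{\ge N} f\|_{L^p} \lesssim N^{-s}\||\nabla|^s P_{\ge N} f\|_{L^p}$, decompose $P_{\ge N} = \sum_{M \gtrsim N} P_M$ and estimate $\|P_M f\|_{L^p} \lesssim M^{-s}\||\nabla|^s P_M f\|_{L^p} = M^{-s}\|P_M(|\nabla|^s P_{\ge N} f)\|_{L^p} \lesssim M^{-s}\||\nabla|^s P_{\ge N} f\|_{L^p}$, and sum $\sum_{M \gtrsim N} M^{-s} \lesssim N^{-s}$; this is the first estimate in~\eqref{bern}.

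The one place that needs genuine (if routine) care --- and the step I would flag as the main obstacle --- is that $|\xi|^s$ is not smooth at the origin, so the symbol $|\xi|^s m_\varphi(\xi/N)$ of $|\nabla|^s P_{\le N}$ is \emph{not} of the clean form $N^s(\text{Schwartz})(\xi/N)$ to which the convolution-kernel estimate applies directly; the dyadic decomposition in the previous paragraph is exactly the device that removes this difficulty, reducing matters to symbols supported in annuli (where $|\xi|^s$ causes no trouble) at the price of a convergent geometric sum. Everything else is bookkeeping with Young's inequality and the insertion of harmless fattened Littlewood--Paley projections; see \cite[Appendix~A]{Taobook} for the details.
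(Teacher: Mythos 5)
Your proposal is correct and is the standard argument: the rescaled Young/convolution-kernel bound for a single-scale multiplier, insertion of fattened projections, and a dyadic decomposition to avoid the non-smoothness of $|\xi|^s$ at the origin, with the geometric sums requiring only $s>0$. The paper does not prove this lemma at all --- it simply cites \cite[Appendix A]{Taobook} --- and the proof given there is essentially the one you outline, so there is nothing to reconcile.
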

Next, we define the notion of a frequency envelope. 
\begin{defn}\cite[Definition~$1$]{Tao1} \label{freq en}We define a \emph{frequency envelope} to be a sequence  $\be = \{\be_k\}$ of positive real numbers with $\be \in \ell^2$. 
Moreover, we require the local constancy condition 
\ant{
2^{-\sigma\abs{j-k}} \be_k \lesssim \be_j \lesssim 2^{\sigma\abs{j-k}} \be_k,
}
where here $\sigma>0$ is a small fixed constant; in what follows we will use $\sigma = \frac{1}{8}$.  If $\be$ is a frequency envelope and $(f, g) \in \dot{H}^s \times \dot{H}^{s-1}$ then we say that \emph{$(f, g)$  lies underneath $\be$} if 
\ant{
\| (P_k f,P_k g)\|_{\dot H^s \times \dot{H}^{s-1}} \le \be_k \, \quad \forall k \in \Z,
} 
and we note that if $(f, g)$ lies underneath $\be$ then we have 
\ant{
\| (f, g)\|_{\dot H^s \times \dot{H}^{s-1}} \lesssim \| \be\|_{\ell^2}.
}
\end{defn}

We will require the following refinement of the Sobolev embedding for radial functions which is a consequence of the Hardy-Littlewood-Sobolev inequality. 
\begin{lem}[Radial Sobolev Embedding]\cite[Corollary A.3]{TVZ}\label{lem rad se} Let  $0 < s <3$ and suppose $f \in \dot{H}^s(\R^3)$ is a radial function. Suppose that 
\ant{
\beta > -\frac{3}{q}, \quad \frac{1}{2} - s \le\frac{1}{q} \le \frac{1}{2}, \quad \frac{1}{q} = \frac{1}{2} - \frac{ \be +s}{3}, 
}
and at most one of the equalities 
$
 q=1, \, \, \, q= \infty, \, \, \,  \frac{1}{q} +s= \frac{1}{2},
 $
 holds. Then 
 \EQ{
 \| r^{\be} f \|_{L^{q}} \le C \|f\|_{\dot{H}^{s}}.
 }
\end{lem}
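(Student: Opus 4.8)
The plan is to reduce the estimate to a (radial) weighted Hardy--Littlewood--Sobolev inequality and to prove the latter by a dyadic decomposition in which radiality is needed \emph{only} for the near-diagonal interactions. Since the asserted bound is an a priori estimate, it suffices by density (radial Schwartz functions) together with Fatou's lemma to prove it for radial Schwartz $f$; and since $0<s<3$ we may write $f=|\nabla|^{-s}h=c_{s}\,|x|^{s-3}*h$ with $h:=|\nabla|^{s}f\in L^{2}(\R^{3})$ radial and $\|h\|_{L^{2}}=\|f\|_{\dot H^{s}}$. The lemma is then equivalent to the inequality
\[
\big\|\,|x|^{\be}\big(|x|^{s-3}*h\big)\big\|_{L^{q}(\R^{3})}\ \lesssim\ \|h\|_{L^{2}(\R^{3})}
\]
for radial $h$, under the stated relations. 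A convenient reformulation of the scaling identity is $\be+\tfrac{3}{q}=\tfrac{3}{2}-s$; note in particular that this together with $\be>-\tfrac{3}{q}$ forces $s<\tfrac{3}{2}$, so that $\tfrac{3}{2}-s>0$.

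Next I would decompose dyadically. Write $h=\sum_{j\in\Z}h_{j}$ with $h_{j}:=h\,\chi_{A_{j}}$, $A_{j}:=\{\,2^{j}\le|x|<2^{j+1}\,\}$, and put $a_{j}:=\|h_{j}\|_{L^{2}}$, so $\|\{a_{j}\}\|_{\ell^{2}}=\|h\|_{L^{2}}$. The core claim is the ``almost orthogonality'' bound
\[
\big\|\,|x|^{\be}\big(|x|^{s-3}*h_{j}\big)\big\|_{L^{q}(A_{k})}\ \lesssim\ \ga_{\,k-j}\,a_{j}\qquad(j,k\in\Z),
\]
where $\ga_{m}=2^{m(3/2-s)}$ for $m\le 0$ and $\ga_{m}=2^{-3m/2}$ for $m\ge 0$ --- a sequence decaying geometrically on both ends (here $\tfrac{3}{2}-s>0$, equivalently $\be>-\tfrac{3}{q}$, is what is used). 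Granting the claim one sums over the annuli: for $q<\I$, using $\|\,|x|^{\be}f\|_{L^{q}}^{q}=\sum_{k}\|\cdot\|_{L^{q}(A_{k})}^{q}$, the triangle inequality on each $A_{k}$, Young's inequality for sequences, and finally the embedding $\ell^{2}\hookrightarrow\ell^{q}$ (which is exactly where $\tfrac{1}{q}\le\tfrac{1}{2}$ enters),
\[
\big\|\,|x|^{\be}f\big\|_{L^{q}}\ \le\ \big\|\,\{\ga_{m}\}*\{a_{j}\}\,\big\|_{\ell^{q}}\ \le\ \|\{\ga_{m}\}\|_{\ell^{1}}\|\{a_{j}\}\|_{\ell^{q}}\ \le\ \|\{\ga_{m}\}\|_{\ell^{1}}\|h\|_{L^{2}};
\]
the case $q=\I$ is identical with $(\ell^{1},\ell^{q})$ replaced by $(\ell^{2},\ell^{2})$ and Young's inequality by Cauchy--Schwarz.

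The almost orthogonality bound splits into two regimes. For $|k-j|\ge 2$ the kernel is essentially constant, $|x-y|^{s-3}\simeq 2^{(s-3)\max(j,k)}$ for $x\in A_{k}$, $y\in A_{j}$; substituting this, estimating $\int_{A_{j}}|h_{j}|\le|A_{j}|^{1/2}a_{j}$ by Cauchy--Schwarz, and using $\|\,|x|^{\be}\|_{L^{q}(A_{k})}\simeq 2^{k(\be+3/q)}$, a one-line computation with $\be+\tfrac{3}{q}=\tfrac{3}{2}-s$ produces precisely the exponents defining $\ga_{k-j}$; this regime uses neither cancellation nor radiality. For $|k-j|\le 1$ I would rescale to $j=k=0$, reducing matters to: for radial $h_{0}$ with $\supp h_{0}\subset\{\,|y|\simeq 1\,\}$, show $\|\,|x|^{s-3}*h_{0}\|_{L^{q}(|x|\simeq 1)}\lesssim\|h_{0}\|_{L^{2}}$ (note $|x|^{\be}\simeq 1$ on $\{|x|\simeq 1\}$). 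Performing the angular integration, $\big(|x|^{s-3}*h_{0}\big)(r)=\int_{0}^{\I}K(r,\rho)\,h_{0}(\rho)\,\rho^{2}\,d\rho$ with $K(r,\rho)=\int_{\Sp^{2}}|r\mathbf e_{1}-\rho\om|^{s-3}\,d\om$, and for $r,\rho\simeq 1$ one has $K(r,\rho)\lesssim|r-\rho|^{s-1}$ when $s<1$, $K(r,\rho)\lesssim\log\!\big(2+|r-\rho|^{-1}\big)$ when $s=1$, and $K(r,\rho)\lesssim 1$ when $s>1$. Hence on $\{r\simeq 1\}$ the operator is dominated by one-dimensional fractional integration of order $s$, which by the one-dimensional Hardy--Littlewood--Sobolev inequality maps $L^{2}(\R)$ into $L^{\ti q}(\R)$ with $\tfrac{1}{\ti q}=\tfrac{1}{2}-s$ (and into $L^{\I}$, by Young's inequality, when $s>\tfrac{1}{2}$; into $L^{p}_{\loc}$ for every $p<\I$ when $s\ge 1$). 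Since $\{r\simeq 1\}$ has finite measure and $\tfrac{1}{q}\ge\tfrac{1}{2}-s$ by hypothesis, Hölder's inequality yields the desired bound for our admissible $q$; the borderline $\tfrac{1}{q}+s=\tfrac{1}{2}$ is exactly the one-dimensional endpoint, which is permissible here unless it coincides with $q=\I$ --- whence the assumption that at most one of the listed equalities holds.

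I expect the diagonal estimate to be the only genuinely delicate point: it is the sole place radiality is exploited --- the angular average $K(r,\rho)\lesssim|r-\rho|^{s-1}$ converts the non-integrable three-dimensional Riesz kernel into a one-dimensional one, which is the whole gain over non-radial Stein--Weiss --- and it is where the endpoint exclusions become necessary. The remaining ingredients --- the off-diagonal kernel bounds and the $\ell^{2}$-summation --- are routine dyadic bookkeeping in which the scaling identity makes all exponents line up to give geometric decay in $|k-j|$, while the hypotheses $\be>-\tfrac{3}{q}$ and $\tfrac{1}{q}\le\tfrac{1}{2}$ play their designated roles.
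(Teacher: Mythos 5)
Your argument is correct, but note that the paper does not actually prove this lemma: it is quoted verbatim from the cited reference [TVZ, Corollary A.3], so there is no internal proof to compare against. Judged on its own, your proof is a valid, self-contained derivation, and it is organized differently from the standard argument in the cited source, which proceeds on the frequency side (Littlewood--Paley decomposition of $f$, weighted bounds for each dyadic piece exploiting the one-dimensional representation of radial Fourier transforms, then summation/interpolation over frequencies). You instead work entirely in physical space: Riesz potential representation $f=c_s|x|^{s-3}*h$, dyadic decomposition into spatial annuli, off-diagonal kernel-size estimates plus Cauchy--Schwarz (which indeed need no radiality and use only $\beta+\tfrac3q=\tfrac32-s>0$), and a near-diagonal estimate where the spherical average of the Riesz kernel, $K(r,\rho)\lesssim |r-\rho|^{s-1}$ (with the obvious modifications at $s=1$ and $s>1$), reduces matters to one-dimensional fractional integration; the $\ell^2\hookrightarrow\ell^q$ step correctly isolates where $\tfrac1q\le\tfrac12$ enters, and your identification of the excluded double equality $(q,s)=(\infty,\tfrac12)$ with the failure of $I_{1/2}:L^2(\R)\to L^\infty$ is exactly right. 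The physical-space route has the advantage of making transparent where radiality is genuinely used (only on the diagonal, which is also precisely where the non-radial Stein--Weiss restriction $\beta\le 0$ would otherwise bite), at the cost of the kernel computation for $K(r,\rho)$. One small imprecision: for $s=\tfrac12$ and $2\le q<\infty$ your quoted form of one-dimensional HLS ($1/\tilde q=\tfrac12-s$, i.e.\ $\tilde q=\infty$) is itself the forbidden endpoint; the needed local bound $L^2\to L^q$ follows instead by majorizing $|r-\rho|^{-1/2}\le|r-\rho|^{s'-1}$ on $|r-\rho|\lesssim1$ for any $s'<\tfrac12$ with $\tfrac1q=\tfrac12-s'$ (or by Young's inequality with the kernel in $L^p_{\mathrm{loc}}$, $p<2$). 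This is a one-line fix, not a gap.
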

\subsection{Strichartz estimates}
An essential ingredient for the small data theory are Strichartz estimates for the linear wave equation in $\R^{1+3}$, 
\EQ{ \label{lin wave}
&v_{tt} - \Delta v = F,\\
&\vec v(0) = (v_0, v_1).
}
A free wave will mean a solution to~\eqref{lin wave} with $F=0$ and will be denoted by $\vec u(t) = S(t) \vec u(0)$. In what follows we will say that a pair $(p, q)$ is admissible  if 
\EQ{ \label{admiss} 
p, q \ge 2,  \, \, \frac{1}{p} + \frac{1}{q}   \le \frac{1}{2}, \, \, \, 
}
The Strichartz estimates we state below are standard and we refer the reader to~\cite{Kee-Tao, LinS} or the book~\cite{Sogge} and the references therein for more details. 

\begin{rem}We note that since we will only consider the waves with radial initial data and with $F$ radial, we can allow the endpoint $(p, q) = ( 2, \infty)$ as an admissible pair.  The admissibility of $(2, \infty)$ in the radial setting was established in~\cite{KlaMac93}. This endpoint is of course forbidden for nonradial data in dimension $d=3$. 
\end{rem}
\begin{prop}\cite{Kee-Tao, KlaMac93, LinS, Sogge} \label{prop strich}Let $\vec v(t)$ be a solution to~\eqref{lin wave} with initial data $\vec v(0) \in \dot{H}^s \times \dot{H}^{s-1}(\R^3)$ for $s >0$. Let $(p, q)$, and $(a, b)$ be admissible pairs satisfying the gap condition 
\EQ{
\frac{1}{p} + \frac{3}{q} = \frac{1}{a'} + \frac{3}{b'} - 2  = \frac{3}{2} - s.
}
where $(a', b')$ are the conjugate exponents of $(a, b)$. 
Then, for any time interval $I \ni 0$ we have the estimates 
\EQ{\label{strich}
\| v \|_{L^{p}_t(I; L^q_x)} \lesssim  \| (v_0, v_1) \|_{\dot H^s \times \dot H^{s-1}} + \|F\|_{L^{a'}_t(I; L^{b'}_x)}.
}
\end{prop}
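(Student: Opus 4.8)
The plan is to prove the Strichartz estimates \eqref{strich} by the usual two-step reduction: first establish the homogeneous estimate for free waves $S(t)\vec v(0)$, then deduce the inhomogeneous estimate via the Christ--Kiselev lemma and a $TT^*$ argument. For the homogeneous piece, I would write $S(t)$ in terms of the half-wave propagators $e^{\pm it|\nabla|}$ and reduce matters, via Littlewood--Paley decomposition and scaling, to a frequency-localized dispersive estimate: for data frequency-localized at $|\xi|\simeq 1$ one has the decay $\|P_1 e^{it|\nabla|} f\|_{L^\infty_x} \lesssim |t|^{-1}\|f\|_{L^1_x}$ in dimension $3$, which interpolated with the trivial $L^2$ bound and fed into the standard Hardy--Littlewood--Sobolev / fractional integration machinery gives $L^p_t L^q_x$ control for admissible $(p,q)$ off the endpoint. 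Summing the Littlewood--Paley pieces using the gap condition $\frac1p + \frac3q = \frac32 - s$ to match the $\dot H^s$ scaling then yields $\|v\|_{L^p_t L^q_x} \lesssim \|(v_0,v_1)\|_{\dot H^s\times\dot H^{s-1}}$.

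The inhomogeneous estimate \eqref{strich} follows formally by writing the Duhamel term $\int_0^t \frac{\sin((t-s)|\nabla|)}{|\nabla|} F(s)\,ds$, applying the homogeneous bound in $t$ and its dual in $s$ (the dual of the homogeneous estimate with exponents $(a,b)$ gives $\|\int_0^\infty S(-s)(0,F(s))\,ds\|_{\dot H^s\times\dot H^{s-1}} \lesssim \|F\|_{L^{a'}_t L^{b'}_x}$), and then removing the non-time-ordered part using the Christ--Kiselev lemma, which applies precisely because $p > a'$ under the stated admissibility and gap conditions (the case $p = a'$ being handled directly or excluded). This is all completely standard; the only point requiring the radiality hypothesis is the inclusion of the endpoint pair $(p,q) = (2,\infty)$, for which I would invoke \cite{KlaMac93} directly rather than reprove it — the radial improvement comes from the extra $r^{-1}$ decay of radial profiles encoded in the radial Sobolev embedding (Lemma~\ref{lem rad se}), which upgrades the $L^\infty_x$ endpoint that is otherwise forbidden in $d=3$.

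The main obstacle, such as it is, is bookkeeping rather than conceptual: one must check that the pair of scaling identities in the gap condition is consistent with admissibility of both $(p,q)$ and $(a,b)$ simultaneously, and that the Littlewood--Paley summation converges — this is where having $s > 0$ (strictly) is used, since at $s=0$ the low-frequency sum diverges logarithmically. Since the statement explicitly cites \cite{Kee-Tao, KlaMac93, LinS, Sogge} and these estimates are textbook material (e.g.\ \cite{Sogge}), the honest approach in the paper is simply to record the statement and refer to those sources; I would not reproduce the $TT^*$ computation. If a self-contained argument were wanted, the dispersive estimate plus Christ--Kiselev plus the radial endpoint of \cite{KlaMac93} is the complete skeleton, and each step is routine.
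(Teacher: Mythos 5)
Your proposal is correct and matches the paper's treatment: the paper does not prove this proposition but simply records it and refers to \cite{Kee-Tao, KlaMac93, LinS, Sogge}, exactly as you suggest, and the dispersive/$TT^*$/Christ--Kiselev skeleton with the radial $(2,\infty)$ endpoint from \cite{KlaMac93} is precisely what underlies those references. No further comment is needed.
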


\subsection{Small data theory -- global existence, scattering, perturbative theory}
A standard argument based on Proposition~\ref{prop strich} with $s = 1/2$, $(p, q) = (4, 4)$, and $(a', b') = (4/3, 4/3)$ yields the following small data result. 
\begin{prop}[Small data theory]\label{small data}
Let $\vec u(0) = (u_0, u_1) \in \dot{H}^{\frac{1}{2}} \times \dot{H}^{-\frac{1}{2}}(\R^3)$. Then there is a unique, solution $\vec u(t) \in \dot{H}^{\frac{1}{2}} \times \dot{H}^{-\frac{1}{2}}(\R^3)$ defined on a maximal interval of existence $I_{\max}( \vec u )= (T_-(\vec u), T_+( \vec u))$.  Moreover, for any compact interval $J \subset I_{\max}$ we have 
\ant{ 
\| u\|_{L^4_t(J; L^4_x)} < \infty.
}
Moreover, a globally defined solution $\vec u(t)$ for $t\in [0, \infty)$ scatters as $ t \to \infty$ to a free wave, i.e., a solution $\vec u_L(t)$ of $\Box u_L =0$
if and only if  $ \|u \|_{L^4_t([0, \infty), L^4_x)}< \infty$. In particular, there exists a constant $\de>0$ so that  
\EQ{ \label{global small}
 \| \vec u(0) \|_{\dot{H}^{\frac{1}{2}} \times \dot{H}^{-\frac{1}{2}}} < \de \Longrightarrow  \| u\|_{L^4_t(\R; L^4_x)} \lesssim \|\vec u(0) \|_{\dot{H}^{\frac{1}{2}} \times \dot{H}^{-\frac{1}{2}}} \lesssim \de
 }
and hence $\vec u(t)$  scatters to free waves as $t \to \pm \infty$. Finally, we have the standard finite time blow-up criterion: 
\EQ{ \label{ftbuc}
T_+( \vec u)< \infty \Longrightarrow \|u \|_{L^4_t([0, T_+( \vec u));L^4_x)} = + \infty
}
A similar statement holds if $- \infty< T_-( \vec u)$. 
\end{prop}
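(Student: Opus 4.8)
The plan is to run the standard contraction-mapping argument in the Strichartz space $S(I):=L^4_t(I;L^4_x)$, and then to read off the continuation and scattering statements from it in the usual way. First I would set up the Duhamel map: writing $\vec u(t)=S(t)\vec u(0)+\int_0^t S(t-s)(0,\mp u^3(s))\,ds$, let $\Phi(u)$ be the first component of the right-hand side, regarded as a map on functions on an interval $I\ni 0$. Applying Proposition~\ref{prop strich} with $s=\tfrac12$, $(p,q)=(4,4)$ and $(a',b')=(\tfrac43,\tfrac43)$ — for which the gap condition $\tfrac14+\tfrac34=\tfrac34+\tfrac94-2=1=\tfrac32-\tfrac12$ holds — together with Hölder's inequality $\|u^3\|_{L^{4/3}_{t,x}}=\|u\|_{L^4_{t,x}}^3$ and the identity $u^3-v^3=(u-v)(u^2+uv+v^2)$, gives
\[
\|\Phi(u)\|_{S(I)}\le C_0\|\vec u(0)\|_{\dot H^{1/2}\times\dot H^{-1/2}}+C_0\|u\|_{S(I)}^3,\qquad \|\Phi(u)-\Phi(v)\|_{S(I)}\le C_0\big(\|u\|_{S(I)}^2+\|v\|_{S(I)}^2\big)\|u-v\|_{S(I)}.
\]
Running the companion inhomogeneous Strichartz bounds at the $\dot H^{1/2}\times\dot H^{-1/2}$ level — strong continuity of the free flow for the linear term and a dual Strichartz estimate for the Duhamel term — shows that any $S(I)$ fixed point lies in $C(I;\dot H^{1/2}\times\dot H^{-1/2})$; I would treat this as routine.

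With these estimates the first assertions are immediate. For \eqref{global small} I take $I=\R$: if $\|\vec u(0)\|_{\dot H^{1/2}\times\dot H^{-1/2}}<\delta$ with $\delta$ small enough that $\Phi$ both preserves and contracts on the ball $\{\,u:\|u\|_{S(\R)}\le 2C_0\delta\,\}$, the fixed point is the desired global solution with $\|u\|_{S(\R)}\lesssim\delta$. For arbitrary data I first note, by density of Schwartz data and the bound $\|S(t)\vec u(0)\|_{S(I)}\lesssim\|\vec u(0)\|_{\dot H^{1/2}\times\dot H^{-1/2}}$, that $\|S(t)\vec u(0)\|_{S(I)}\to 0$ as $|I|\to 0$; choosing $I\ni 0$ with this quantity small makes $\Phi$ a contraction on a small ball in $S(I)$, giving a unique local solution, and gluing produces a maximal interval $I_{\max}=(T_-,T_+)$ with $\vec u\in C(I_{\max};\dot H^{1/2}\times\dot H^{-1/2})$. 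On a compact $J\subset I_{\max}$ one has $\sup_{t\in J}\|\vec u(t)\|_{\dot H^{1/2}\times\dot H^{-1/2}}<\infty$, so $J$ splits into finitely many subintervals on each of which the local construction applies, whence $\|u\|_{S(J)}<\infty$. Finally, if $T_+<\infty$ but $\|u\|_{S([0,T_+))}<\infty$, then for $t$ near $T_+$ the nonlinear term in the Strichartz estimate on $[t,T_+)$ is small, forcing $\{\vec u(\tau)\}$ to be Cauchy in $\dot H^{1/2}\times\dot H^{-1/2}$ as $\tau\uparrow T_+$; re-solving from the limit extends the solution past $T_+$, contradicting maximality. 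This proves \eqref{ftbuc}, and the argument at $T_-$ is symmetric.

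For the scattering dichotomy, if $\vec u$ is global forward in time with $\|u\|_{S([0,\infty))}<\infty$, I set $\vec u_L^+(t):=S(t)\big(\vec u(0)+\int_0^\infty S(-s)(0,\mp u^3(s))\,ds\big)$; the integral converges in $\dot H^{1/2}\times\dot H^{-1/2}$ by the dual Strichartz estimate applied to time-tails, and $\|\vec u(t)-\vec u_L^+(t)\|_{\dot H^{1/2}\times\dot H^{-1/2}}\lesssim\|u\|_{S([t,\infty))}^3\to 0$. Conversely, if $\|\vec u(t)-\vec u_L(t)\|_{\dot H^{1/2}\times\dot H^{-1/2}}\to 0$ for some free wave $\vec u_L$, I choose $T$ so that $\|u_L\|_{S([T,\infty))}$ and $\|\vec u(T)-\vec u_L(T)\|_{\dot H^{1/2}\times\dot H^{-1/2}}$ both lie below a perturbative threshold; since $u_L$ then solves \eqref{u eq} on $[T,\infty)$ up to an error of the form $\pm u_L^3$, which has small $L^{4/3}_{t,x}$ norm, a long-time perturbation lemma — itself a consequence of the same Strichartz estimates, proved by a bootstrap over finitely many subintervals — gives $\|u\|_{S([T,\infty))}<\infty$, hence $\|u\|_{S([0,\infty))}<\infty$.

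I expect the only genuinely non-mechanical steps to be the Cauchy argument showing that a finite $S$-norm up to a finite $T_+$ forces convergence of $\vec u(t)$ in $\dot H^{1/2}\times\dot H^{-1/2}$ (which underlies both the continuation criterion and \eqref{ftbuc}) and the long-time perturbation lemma used for the ``scattering $\Rightarrow$ finite $S$-norm'' direction; both are handled by splitting the relevant time interval into finitely many pieces on which a reference Strichartz norm is small and iterating the nonlinear estimates above.
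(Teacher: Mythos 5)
Your proposal is correct and follows essentially the same route as the paper, which states Proposition~\ref{small data} without proof and only points to the standard contraction-mapping argument based on Proposition~\ref{prop strich} with $s=\tfrac12$, $(p,q)=(4,4)$, $(a',b')=(\tfrac43,\tfrac43)$ — exactly the scheme you carry out, including the continuation/blow-up criterion via the Cauchy argument in $\dot H^{1/2}\times\dot H^{-1/2}$ and the scattering dichotomy via the Duhamel tail and a long-time perturbation (Lemma~\ref{pert}) step. One small caution: since the problem is critical in $\dot H^{1/2}\times\dot H^{-1/2}$, the bound $\sup_{t\in J}\|\vec u(t)\|_{\dot H^{1/2}\times\dot H^{-1/2}}<\infty$ does not by itself yield a uniform local existence time, so the finiteness of $\|u\|_{L^4_{t,x}(J)}$ on compact $J\subset I_{\max}$ should be justified by covering $J$ with the datum-dependent local intervals produced by the construction — which is in effect the covering argument you invoke.
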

For the concentration compactness procedure in Section~\ref{con comp} one requires the following perturbation theory for approximate solutions to~\eqref{u eq}. 
\begin{lem}[Perturbation Lemma] \label{pert}
There are continuous functions \\$\e_0,C_0:(0,\I)\to(0,\I)$ such that the following holds:
Let $I\subset \R$ be an open interval (possibly unbounded), $\vec u, \vec v\in C(I; \HH)$  satisfying for some $A>0$
\ant{
 \|\vec v\|_{L^\infty(I;\dot{H}^{\frac{1}{2}} \times \dot H^{-\frac{1}{2}})} +   \|v\|_{L^4_t(I;L^4_x)} & \le A \\
 \|\glei(u)\|_{L^{\frac{4}{3}}_t(I; L^{\frac{4}{3}}_x)}
   + \|\glei(v)\|_{L^{\frac{4}{3}}_t(I; L^{\frac{4}{3}}_x)} + \|w_0\|_{L^4_t(I;L^4_x)} &\le \e \le \e_0(A),
   }
where $\glei(u):=\Box u \pm u^3$ in the sense of distributions, and $\vec w_0(t):=S(t-t_0)(\vec u-\vec v)(t_0)$ with $t_0\in I$ arbitrary but fixed.  Then
\ant{ 
  \|\vec u-\vec v-\vec w_0\|_{L^\infty(I;\dot{H}^{\frac{1}{2}} \times \dot H^{-\frac{1}{2}})}+\|u-v\|_{L^4_t(I;L^4_x)} \le C_0(A)\e.}
  In particular,  $\|u\|_{L^4_t(I;L^4_x)}<\I$.
\end{lem}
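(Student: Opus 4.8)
The plan is to run a standard continuity (bootstrap) argument on short time intervals, glued together by a finite iteration whose length is controlled by $A$. First I would set $\gamma := u - v$. Subtracting the two inhomogeneous equations and expanding the cubic gives
\[
\Box \gamma = \big(\glei(u) - \glei(v)\big) \mp \big(3 v^2 \gamma + 3 v \gamma^2 + \gamma^3\big),
\]
and, since $\vec w_0(t) = S(t-t_0)\vec\gamma(t_0)$, Duhamel's formula yields $\vec\gamma(t) - \vec w_0(t) = \int_{t_0}^{t} S(t-s)\big(0,\Box\gamma(s)\big)\,ds$. The quantity to control is thus $\|\gamma - w_0\|_{L^\infty_t(I;\HH)} + \|\gamma - w_0\|_{L^4_t(I;L^4_x)}$. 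The tools are the Strichartz estimates of Proposition~\ref{prop strich} with $s=\tfrac12$, $(p,q)=(4,4)$, $(a',b')=(\tfrac43,\tfrac43)$, together with the accompanying energy-type bound $\|\vec\Phi\|_{L^\infty_t(J;\HH)} \lesssim \|\vec\Phi(t_*)\|_{\HH} + \|\Box\Phi\|_{L^{4/3}_t(J;L^{4/3}_x)}$, and the Hölder bounds $\|v^2\gamma\|_{L^{4/3}_{t,x}} \le \|v\|_{L^4_{t,x}}^2\|\gamma\|_{L^4_{t,x}}$, $\|v\gamma^2\|_{L^{4/3}_{t,x}} \le \|v\|_{L^4_{t,x}}\|\gamma\|_{L^4_{t,x}}^2$, $\|\gamma^3\|_{L^{4/3}_{t,x}} = \|\gamma\|_{L^4_{t,x}}^3$. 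A preliminary reduction to compact subintervals of $I$, where all norms involved are a priori finite by the local theory of Proposition~\ref{small data}, lets one start the continuity argument.

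Next I would split $I$ at $t_0$ and partition each of the two resulting halves into $N=N(A)$ consecutive subintervals $I_j=[a_j,a_{j+1}]$ with $\|v\|_{L^4_t(I_j;L^4_x)}\le\de_0$, where $\de_0$ is a fixed small absolute constant; this is possible with $N(A)\lesssim (A/\de_0)^4$ because $\|v\|_{L^4_t(I;L^4_x)}\le A$. On each $I_j$, applying the combined Strichartz/energy estimate with base point $a_j$ and writing $\eta_j:=\|(\vec\gamma-\vec w_0)(a_j)\|_{\HH}$, $M_j:=\|\gamma\|_{L^4_t(I_j;L^4_x)}\le \|\gamma-w_0\|_{L^4_t(I_j;L^4_x)}+\e$ (using $\|w_0\|_{L^4_{t,x}(I)}\le\e$), one obtains
\[
\|\gamma - w_0\|_{L^\infty_t(I_j;\HH)\cap L^4_t(I_j;L^4_x)} \le C\eta_j + C\e + C\big(\de_0^2 + \de_0 M_j + M_j^2\big)M_j .
\]
Choosing $\de_0$ so that $C\de_0^2\le\tfrac14$ and $\e\le\e_0(A)$ small, a continuity argument in the right endpoint of $I_j$ (initialized on a small enough sub-subinterval) absorbs the nonlinear terms and gives $\|\gamma - w_0\|_{L^\infty_t(I_j;\HH)\cap L^4_t(I_j;L^4_x)} \le 2C(\eta_j+\e)$.

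Finally I would iterate: since $\eta_{j+1}\le\|\gamma-w_0\|_{L^\infty_t(I_j;\HH)}\le 2C(\eta_j+\e)$ and $\eta_1=0$ at the two copies of $t_0$, induction gives $\eta_j\le(4C)^{j}\e$, hence $\eta_j\le (4C)^{N(A)}\e$ for all $j$. Summing the fourth powers of the $L^4$-norms over the $\le 2N(A)$ subintervals and taking the supremum of the $\HH$-norms yields $\|\vec u-\vec v-\vec w_0\|_{L^\infty_t(I;\HH)}+\|u-v\|_{L^4_t(I;L^4_x)}\le C_0(A)\e$ with $C_0(A)\lesssim N(A)^{1/4}(4C)^{N(A)}$, which also forces $\e_0(A)\lesssim(4C)^{-N(A)}$; both functions are continuous in $A$ since $N(A)\le(A/\de_0)^4+2$ is, after replacing the integer $N(A)$ by this continuous majorant. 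Then $\|u\|_{L^4_t(I;L^4_x)}\le\|v\|_{L^4_t(I;L^4_x)}+\|u-v\|_{L^4_t(I;L^4_x)}\le A+C_0(A)\e<\infty$. The main obstacle, and the only genuinely delicate point, is that this is \emph{not} a smallness statement on all of $I$ at once: because $A$ may be large the argument must be localized to $\sim N(A)$ subintervals and the error is amplified by a bounded factor on each, so $C_0(A)$ necessarily grows exponentially in $A$; making this precise requires the per-interval continuity argument (whose initialization uses the a priori local finiteness of $\|\gamma\|_{L^4_{t,x}}$ from the local theory) and careful bookkeeping of the iteration constants. Everything else is a routine combination of Proposition~\ref{prop strich} and Hölder's inequality.
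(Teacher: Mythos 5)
Lemma~\ref{pert} is stated in the paper without proof (it is the standard Kenig--Merle long-time perturbation lemma, quoted from the literature), so there is no in-paper argument to compare with; your proposal is the standard proof and is correct. The subdivision of $I$ into $O\big((A/\de_0)^4\big)$ intervals on which $\|v\|_{L^4_{t,x}}$ is small, the per-interval Strichartz/H\"older bootstrap for $\vec u-\vec v-\vec w_0$ with base point moved to the left endpoint, and the iteration of the endpoint errors $\eta_j$ (yielding the exponentially large $C_0(A)$ and exponentially small $\e_0(A)$, made continuous by the obvious majorant for the integer $N(A)$) is exactly how this lemma is proved in the sources the paper relies on, and your explicit handling of the initialization of the continuity argument via a priori local finiteness of the $L^4_{t,x}$ norm addresses the only genuinely delicate point.
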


\subsection{Blow-up for non-positive energies}
Finally, we recall that in the case of the focusing equation, any nontrivial solution with negative energy must blow-up in both time directions. This result was proved in~\cite{KSV} for solutions to~\eqref{u eq}. 
\begin{prop}\cite[Theorem $3.1$]{KSV}\label{neg blow up} Let $\vec u(t)$ be a solution to~\eqref{u eq} with the focusing sign and with maximal interval of existence $I_{\max} = (T_-, T_+)$. If $E(\vec u) \le 0$ then $\vec u(t)$ is  either identically zero or blows up in finite time in both time directions, i.e., $T_+ < + \infty$ and $T_->- \infty$. 
\end{prop}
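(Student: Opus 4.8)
\textbf{Plan of proof for Proposition~\ref{neg blow up}.}
The strategy is the classical Levine/Payne convexity (concavity) argument, adapted to the low-regularity setting by a truncation in space. First I would observe that it suffices to rule out global existence in one time direction, say $T_+ = +\infty$, since the equation is time-reversible and the hypothesis $E(\vec u)\le 0$ is symmetric in $t$. Suppose then that $\vec u$ is a nontrivial solution with $E(\vec u)\le 0$ and $T_+ = +\infty$. Assuming for the moment that $\vec u(t)$ is a finite-energy solution (i.e.\ lies in $\dot H^1\times L^2$ with finite energy), consider the localized quantity $y(t):=\int_{\R^3}\chi_R(x)\,u^2(t,x)\,dx$ for a smooth compactly supported cutoff $\chi_R$, or more simply $y(t):=\int u^2$ if one knows enough spatial decay; the key point is that along the flow
\EQ{
\ddot y(t) = 2\int u_t^2 - 2\int |\nabla u|^2 \mp 2\int u^4 + (\text{error from }\chi_R),
}
and in the focusing case the leading terms combine with the energy identity to give, after absorbing the cutoff errors by finite speed of propagation and the a~priori bound, an inequality of the form $\ddot y \ge c\big(\int u_t^2\big) - (\text{small})$ together with $\ddot y \gtrsim -E(\vec u)\ge 0$ plus a coercive term. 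One then runs the concavity argument: setting $z = y^{-\alpha}$ for a suitable $\alpha>0$, one shows $\ddot z \le 0$ as long as the solution exists, which forces $y$ to blow up in finite time, contradicting $T_+=+\infty$. I would cite~\cite{KSV} for the precise form of the functional and the exact exponents, since the statement we need is exactly their Theorem~3.1; here I am only sketching the mechanism.

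The genuine subtlety — and the step I expect to be the main obstacle — is that a solution merely in $\dot H^{1/2}\times\dot H^{-1/2}$ need not a~priori be in the energy space, so the energy $E(\vec u)$ and the virial functional $y(t)$ are not obviously well defined or differentiable. This is precisely why the statement is quoted from~\cite{KSV} rather than proved afresh: in that paper the relevant solutions are taken in $\dot H^1\times L^2$, and the additional regularity needed to make the convexity computation rigorous is available by hypothesis. In the context of the present paper this is not a circularity, because Proposition~\ref{neg blow up} is invoked only \emph{after} the critical element has been upgraded to $\dot H^1\times L^2$ regularity via the double Duhamel argument of Section~\ref{dd}; at that stage all the integrals above converge and the differentiations under the integral sign are justified by the local well-posedness and persistence-of-regularity theory together with finite speed of propagation for the cutoff errors.

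The remaining bookkeeping is routine: one checks that the cutoff error terms in $\ddot y$ are controlled, for each fixed time, by the energy flux through the light cone exterior to $\{|x|\le R\}$, which can be made negligible by taking $R$ large compared to the support of the data (using finite speed of propagation) or, for non-compactly-supported data, by a limiting argument in $R$ using the $\dot H^1\times L^2$ bound. One also verifies the sign conditions: nontriviality of $\vec u$ guarantees $y(t)>0$ and that the coercive lower bound on $\ddot y$ is strictly positive on a time interval of positive length, which is enough to start the concavity machine. Finally, the statement that $\vec u$ cannot be nontrivial with $E(\vec u)=0$ and remain global is handled identically, the borderline case $E=0$ being absorbed into the same inequality $\ddot y \gtrsim -E(\vec u) + (\text{coercive}) > 0$. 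Since all of this is carried out in detail in~\cite{KSV}, I would simply refer the reader there for the complete argument.
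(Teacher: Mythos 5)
The paper gives no proof of this proposition at all: it is imported verbatim as \cite[Theorem 3.1]{KSV}, and your proposal ultimately does the same, so at the level of what the paper actually does your approach coincides with it. Your sketch of the localized Levine-type concavity argument is consistent with the mechanism in \cite{KSV}, and your observation that the statement is only invoked after the critical element has been upgraded to $\dot{H}^1 \times L^2$ (so that the energy and the virial quantity make sense) correctly identifies why there is no regularity gap or circularity in the paper's use of it.
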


\section{Concentration compactness}\label{con comp}
In this section  we begin the proof of Theorem~\ref{main}. We will follow the concentration-compactness/rigidity method introduced by Kenig and Merle in~\cite{KM06, KM08}. The concentration compactness part of the argument, which is based on the profile decompositions of Bahouri and Gerard, \cite{BG}, is by now standard and we will essentially follow the scheme from~\cite{KM10}, which is a refinement of the methods from~\cite{KM06, KM08}. Indeed, the main conclusion of this section is that  in the event that Theorem~\ref{main} fails, there exists a minimal, nontrivial, non-scattering solution to~\eqref{u eq}, which we will call the critical element. 

We begin with some notation, following~\cite{KM10} for convenience.  For initial data $ (u_0, u_1) \in \dot H^{\frac{1}{2}} \times \dot H^{-\frac{1}{2}}$ we let  $\vec u(t) \in \dot H^{\frac{1}{2}} \times \dot H^{-\frac{1}{2}}$ be the unique solution to~\eqref{u eq} with initial data $\vec u(0) = (u_0, u_1)$ defined on its maximal interval of existence \\$I_{\max}(\vec u) := (T_-( \vec u), T_+( \vec u))$.  For $A>0$ define 
\EQ{
\B(A):= \{ (u_0, u_1) \in\dot H^{\frac{1}{2}} \times \dot H^{-\frac{1}{2}}   \quad : \quad      \|\vec u(t)\|_{L^{\infty}_t(I_{\max}(\vec u); \dot H^{\frac{1}{2}} \times \dot H^{-\frac{1}{2}})} \le A\}.
}
\begin{defn} We say that $\mathcal{SC}(A)$ holds if for all $ \vec u = (u_0, u_1) \in \B(A)$ we have $I_{\max}(\vec u) = \R$ and $ \|u \|_{L^4_t L^4_x} < \infty$. We also say that $\mathcal{SC}(A;  \vec u)$ holds if $\vec u \in \B(A)$, $I_{\max} (\vec u) = \R$ and $ \|u \|_{L^4_t L^4_x} < \infty$. 
\end{defn} 
\begin{rem}
Recall from Proposition~\ref{small data} that $\| u \|_{L^4_t; L^4_x}< \infty$ if and only if $\vec u$ scatters to a free waves as $t \to \pm \infty$. Therefore Theorem~\ref{main} is equivalent to the statement that $\mathcal{SC}(A)$ holds for all $A>0$. 
\end{rem} 

Now suppose that Theorem~\ref{main} {\em is false}. By  Proposition~\ref{small data}, there is an $A_0>0$ small enough so that $\mathcal{SC}(A_0)$ holds.  Give that we are assuming that Theorem~\ref{main} fails, we can find a threshold, or critical  value $A_C$ so that for $A<A_C$, $\mathcal{SC}(A)$ holds, and for $A>A_C$, $\mathcal{SC}(A)$ fails. Note that $0<A_0<A_C$. The standard conclusion of this assumed failure of the Theorem~\ref{main}  is that there is a minimal non-scattering solution $\vec u(t)$ to~\eqref{u eq} so that $\mathcal{SC}(A_C, \vec u)$ fails, which enjoys certain compactness properties. 

 We will state a refined version of this result below, and we refer the reader to~\cite{KM10, Shen, TVZ, TVZ08} for the details of the argument. As usual, the main  ingredients are the linear and nonlinear Bahouri-Gerard type profile decompositions from~\cite{BG} used in conjunction with the perturbation theory in Lemma~\ref{pert}. 

\begin{prop} \label{crit element} Suppose that Theorem~\ref{main} is false. Then, there exists a solution $\vec u(t)$ such that $\mathcal{SC}(A_C;  \vec u)$ fails. Moreover, we can assume that $\vec u(t)$ does not scatter in either time direction, i.e.,  
\EQ{\label{blow up}
\|u\|_{L^4((T_-(\vec{u}), 0]; L^4_x)} = \|u\|_{L^4([0, T_+(\vec u)); L^4_x)} =  \infty
}
In addition, there exists a continuous  function $N: I_{\max}(\vec u) \to (0, \infty)$ so that the set 
\EQ{\label{K}
K:= \left\{ \left(\frac{1}{N(t)} u\left( t, \frac{\cdot}{N(t)} \right), \, \frac{1}{N^2(t)} u_t\left( t, \frac{\cdot}{N(t)} \right) \right) \mid t \in I_{\max}\right\}
} 
is pre-compact in $\dot{H}^\frac{1}{2} \times \dot H^{-\frac{1}{2}} (\R^3)$. 
\end{prop}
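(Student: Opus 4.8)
\textbf{Proof proposal for Proposition~\ref{crit element}.}

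The plan is to run the standard Kenig--Merle concentration-compactness machine, which reduces the construction of the critical element to three ingredients already at our disposal: the small-data/scattering dichotomy of Proposition~\ref{small data}, the perturbation estimate of Lemma~\ref{pert}, and the linear and nonlinear Bahouri--G\'erard profile decompositions in $\dot H^{1/2}\times\dot H^{-1/2}$ from \cite{BG}. First I would recall that, by the definition of the critical value $A_C$ and the failure of Theorem~\ref{main}, there exists a sequence of solutions $\vec u_n(t)$ with $\|\vec u_n\|_{L^\infty_t(I_{\max}(\vec u_n);\dot H^{1/2}\times\dot H^{-1/2})}\to A_C$ and $\|u_n\|_{L^4_{t,x}}=\infty$ (say in forward time, after a time translation). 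Rescaling each $\vec u_n$ by its own scaling symmetry, we may arrange that $\|\vec u_n(0)\|_{\dot H^{1/2}\times\dot H^{-1/2}}$ is comparable to $A_C$; apply the linear profile decomposition to the bounded sequence of initial data $\vec u_n(0)$ to write $\vec u_n(0)=\sum_{j<J}\vec U^j_{L,n}(0)+\vec w^J_n(0)$ with the usual asymptotic orthogonality of the parameters and of the free energies, and with $\limsup_n\|w^J_n\|_{L^4_{t,x}}\to 0$ as $J\to\infty$.

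The heart of the argument is the nonlinear profile decomposition plus a contradiction. Associate to each linear profile $\vec U^j_L$ its nonlinear profile $\vec U^j$ (the solution of \eqref{u eq} with the matching asymptotic behavior). If more than one profile were nontrivial, the almost-orthogonality of the $\dot H^{1/2}\times\dot H^{-1/2}$-norms would force each nontrivial profile to have $L^\infty_t$ critical norm strictly below $A_C$, hence by minimality each $\vec U^j$ would be global and scatter with a finite $L^4_{t,x}$ bound depending only on its own critical norm; feeding the superposition $\sum_{j<J}\vec U^j(\cdot-t^j_n,\cdot-x^j_n)+\vec w^J_n$ into the Perturbation Lemma~\ref{pert} (using the orthogonality of parameters to control the cross terms and the smallness of $w^J_n$ in $L^4_{t,x}$) would give $\|u_n\|_{L^4_{t,x}}<\infty$ for $n$ large, contradicting $\|u_n\|_{L^4_{t,x}}=\infty$. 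Therefore exactly one profile is nontrivial, the error $w^J_n\to 0$ in $\dot H^{1/2}\times\dot H^{-1/2}$, and after undoing the rescaling we obtain a solution $\vec u_c$ with $\|\vec u_c\|_{L^\infty_t(I_{\max};\dot H^{1/2}\times\dot H^{-1/2})}=A_C$ for which $\mathcal{SC}(A_C;\vec u_c)$ fails; a further argument, again using minimality and Lemma~\ref{pert} at both ends of $I_{\max}$, shows $\vec u_c$ can be chosen to not scatter in \emph{either} time direction, giving \eqref{blow up}.

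For the compactness of the set $K$ in \eqref{K}, I would argue that the trajectory $\{\vec u_c(t):t\in I_{\max}\}$ is pre-compact modulo the scaling group. Take any sequence $t_n\in I_{\max}$; since $\|u_c\|_{L^4_{t,x}}=\infty$ on both sides of each $t_n$, the solution cannot scatter from $\vec u_c(t_n)$ in either direction, so the initial data $\vec u_c(t_n)$, which have $\dot H^{1/2}\times\dot H^{-1/2}$-norm bounded by $A_C$, form a minimizing non-scattering sequence. Running the profile decomposition on $\vec u_c(t_n)$ and repeating the one-profile dichotomy above (a solution with two or more nontrivial profiles would scatter), one finds that $\vec u_c(t_n)$ is, up to a subsequence and up to rescaling by some $N(t_n)>0$ and translating in $x$—which in the radial category we may take to be $0$—convergent in $\dot H^{1/2}\times\dot H^{-1/2}$; this is exactly pre-compactness of $K$. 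Continuity of $N(\cdot)$ is then obtained from continuity of $t\mapsto\vec u_c(t)$ in $\dot H^{1/2}\times\dot H^{-1/2}$ together with the compactness just established (any failure of continuity would produce two incompatible limits along $t\to t_0$).

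The main obstacle is the case analysis in the profile decomposition: one must verify that the Perturbation Lemma~\ref{pert} genuinely applies to the multi-profile approximate solution, which requires the standard but delicate bookkeeping that the nonlinear interaction errors $\|\glei(\sum_j U^j_n)\|_{L^{4/3}_{t,x}}$ and the cross terms vanish as $n\to\infty$ by the asymptotic orthogonality of the space-time translation and scaling parameters, and that profiles living at time scales $t^j_n\to\pm\infty$ are handled by replacing their nonlinear profile with the scattering free evolution. All of this is by now routine in the subcritical wave setting and we will follow \cite{KM10, Shen, TVZ, TVZ08} for the details.
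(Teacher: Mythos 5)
Your proposal is correct and follows essentially the same route as the paper: the paper does not spell out this construction either, but invokes exactly the standard Kenig--Merle concentration-compactness scheme you describe (linear and nonlinear Bahouri--G\'erard profile decompositions combined with the Perturbation Lemma~\ref{pert}), deferring the details to the same references \cite{KM10, Shen, TVZ, TVZ08}. Your outline of the single-profile dichotomy, the compactness of the trajectory modulo scaling, and the continuity of $N(t)$ matches the standard argument the paper relies on.
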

\begin{rem}
After passing to subsequences,  scaling considerations, and possibly time reversal, we can assume, without loss of generality, that $T_+(\vec u) = + \infty$, and $N(t) \le 1$ on $[0, \infty)$. We can further reduce this to two separate cases: Either we have 
\begin{itemize}
\item $N(t) \equiv 1$ for all $t \ge 0$
\item $\limsup_{t \to \infty} N(t)  = 0$
\end{itemize}
These reductions follow from general arguments and are now standard. See for example \cite{KM10, KTV09, Shen} for more details. 
\end{rem}

In what follows it will be convenient to give a name to the compactness property~\eqref{K} satisfied by the critical element. 
\begin{defn} Let $I \ni 0$ be a time interval and suppose $\vec u(t)$ be a solution to~\eqref{u eq} on  $I$. We will say $\vec u(t)$ has the \emph{compactness property on $I$} if there exists a continuous  function $N: I \to (0, \infty)$ so that the set 
\ant{
K:= \left\{ \left(\frac{1}{N(t)} u\left( t, \frac{\cdot}{N(t)} \right), \, \frac{1}{N^2(t)} u_t\left( t, \frac{\cdot}{N(t)} \right) \right) \mid t \in I_{\max}\right\}
} 
is pre-compact in $\dot{H}^\frac{1}{2} \times \dot H^{-\frac{1}{2}} (\R^3)$. 
\end{defn} 





\begin{rem}\label{Ceta}
A consequence of a critical element having the \emph{compactness property} on an interval $I$ is that, after modulation,  we can control the $\dot{H}^{\frac{1}{2}} \times \dot{H}^{-\frac{1}{2}}$ tails uniformly in $t \in I$. Indeed, by the Arzela-Ascoli theorem,  for any $\eta > 0$ there exists $C(\eta) < \infty$ such that
\EQ{\label{3.3}
&\int_{|x| \geq \frac{C(\eta)}{N(t)}}||\nabla|^{1/2} u(t,x)|^{2} dx + \int_{|\xi| \geq C(\eta)N(t)} |\xi| |\hat{u}(t,\xi)|^{2} d\xi  \le \eta, \\
&\int_{|x| \geq \frac{C(\eta)}{N(t)}}||\nabla|^{-1/2} u_t(t,x)|^{2} dx + \int_{|\xi| \geq C(\eta)N(t)} |\xi|^{-1} |\hat{u_t}(t,\xi)|^{2} d\xi  \le  \eta, 
}
for all $t \in I$. 
\end{rem}

Another standard fact about solutions to~\eqref{u eq} with the compactness property is that any Strichartz norm of the linear part of the evolution vanishes  as $t \to T_- $ and as $t \to T_+$. A concentration compactness argument then implies that the linear part of the evolution must vanish weakly in $\dot{H}^{\frac{1}{2}} \times \dot{H}^{-\frac{1}{2}}$, i.e, 
for any $t_{0} \in I$,
\begin{equation}
S(t_{0} - t) u(t) \rightharpoonup 0
\end{equation}
weakly in $\dot{H}^{1/2} \times \dot{H}^{-1/2}$ as $t \nearrow \sup I$, $t \searrow \inf I$, 
see  \cite[Section $6$]{TVZ08}\cite[Proposition $3.6$]{Shen}. 
This implies the following lemma, which will be crucial in the  proof of Theorem~\ref{main}. 
\begin{lem} \cite[Section $6$]{TVZ08}\cite[Proposition $3.6$]{Shen} \label{lem weak}Let $\vec u(t)$ be a  solution to~\eqref{u eq} with the compactness property on an interval $I = (T_-, T_+)$. Then for any $t_0 \in I$ we can write 
\EQ{
\int_{t_0}^T S(t_0 - s) (0, \pm u^3)  \, ds \rightharpoonup \vec u(t_0)  \mas T \nearrow T_+ \quad \textrm{weakly in} \, \, \dot{H}^{\frac{1}{2}} \times \dot{H}^{-\frac{1}{2}}\\
-\int^{t_0}_T S(t_0 - s) (0, \pm u^3)  \, ds \rightharpoonup \vec u(t_0)  \mas T \searrow T_- \quad \textrm{weakly in} \, \, \dot{H}^{\frac{1}{2}} \times \dot{H}^{-\frac{1}{2}}
}
\end{lem}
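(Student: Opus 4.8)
The plan is to establish two facts and combine them. \textbf{First}, I would show that for a solution $\vec u(t)$ with the compactness property on $I = (T_-,T_+)$, any admissible Strichartz norm of the linear evolution $S(t_0-t)\vec u(t)$ over a window shrinking toward an endpoint tends to zero. Concretely, fix $t_0 \in I$ and for $t$ near $T_+$ consider $\|S(\cdot - t)\vec u(t)\|_{L^4_{s,x}((t,t_0) \times \R^3)}$ — or rather, it is cleaner to run the Duhamel formula $\vec u(t_0) = S(t_0-t)\vec u(t) + \int_t^{t_0} S(t_0-s)(0,\pm u^3)\,ds$ and show the Strichartz norm of $S(\cdot - t)\vec u(t)$ on any compact subinterval of $I$ vanishes as $t \nearrow T_+$. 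The mechanism is: by the blow-up criterion~\eqref{ftbuc}/\eqref{blow up}, $\|u\|_{L^4_{s,x}((t,T_+)\times\R^3)} = \infty$, so if the free part did not shed its Strichartz norm, the perturbation Lemma~\ref{pert} (comparing $\vec u$ to the free wave $S(\cdot-t)\vec u(t)$, for which $\glei = 0$) would force $\|u\|_{L^4_{s,x}}$ to be finite on a neighborhood of $T_+$ — contradiction. This is exactly the standard argument referenced in the excerpt as appearing in~\cite{TVZ08, Shen}, so I would cite it, but the one-paragraph sketch above is the content.

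\textbf{Second}, I would upgrade vanishing-of-Strichartz-norm to weak vanishing in $\dot H^{1/2} \times \dot H^{-1/2}$: if $t_n \nearrow T_+$ and $S(t_0-t_n)\vec u(t_n) \rightharpoonup \vec v_0 \neq 0$ along a subsequence (such a weak limit exists after passing to a subsequence by boundedness~\eqref{1.3}), then by the local constancy / almost-orthogonality in linear profile decompositions and the continuity of the free flow, the free evolution $S(\cdot)\vec v_0$ inherits a nonzero lower bound on some fixed-length Strichartz window, contradicting the vanishing established in the first step. Hence $S(t_0-t)\vec u(t) \rightharpoonup 0$ as $t \nearrow T_+$ (and symmetrically as $t \searrow T_-$), again as recorded in the excerpt.

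\textbf{Third}, the lemma then follows by a direct manipulation of~\eqref{duh}. Rewrite Duhamel as
\ant{
\int_{t_0}^{T} S(t_0 - s)(0,\pm u^3)\,ds = \vec u(t_0) - S(t_0 - T)\vec u(T).
}
For any test element $\vec\phi \in \dot H^{-1/2} \times \dot H^{1/2}$ (the dual space), pair both sides: the left side pairs against $\vec\phi$ and converges to $\ang{\int_{t_0}^{T_+} S(t_0-s)(0,\pm u^3)\,ds, \vec\phi}$ as $T \nearrow T_+$ provided we know the integral makes sense as a weak limit — but that is precisely what the identity delivers, since the right side is $\ang{\vec u(t_0),\vec\phi} - \ang{S(t_0-T)\vec u(T),\vec\phi}$ and the second term $\to 0$ by Step Two. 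Therefore the left side converges weakly to $\vec u(t_0)$. The statement for $T \searrow T_-$ is identical after flipping orientation (the sign in front of the integral accounts for the reversed limits of integration).

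\textbf{Main obstacle.} The genuinely substantive step is the first one — that the free part sheds all its Strichartz norm near the endpoints — because it is where the compactness property and the perturbative theory interact, and where one must be careful that the modulation parameter $N(t)$ does not interfere (one argues after rescaling, using that $K$ in~\eqref{K} is precompact so the profiles are genuinely tight, and the vanishing is uniform). The passage from Strichartz-vanishing to weak-$\dot H^{1/2}$-vanishing (Step Two) requires the standard fact that a nonzero element of $\dot H^{1/2}\times\dot H^{-1/2}$ has a nonzero free Strichartz norm on some window, which is where the linear profile decomposition of Bahouri–Gérard is invoked. Since both of these are explicitly attributed in the excerpt to \cite{TVZ08, Shen}, in the write-up I would state them as cited inputs and give only Step Three — the algebraic rearrangement of Duhamel — in full detail.
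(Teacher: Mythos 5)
Your overall plan has the same shape as the paper's treatment: the paper does not prove this lemma either, but cites \cite{TVZ08} and \cite{Shen} for the statement that the linear part $S(t_0-t)\vec u(t)$ vanishes weakly as $t$ approaches the endpoints, and then the lemma is exactly your Step Three, i.e.\ the rearranged Duhamel identity $\int_{t_0}^{T} S(t_0-s)(0,\pm u^3)\,ds = \vec u(t_0) - S(t_0-T)\vec u(T)$ paired against a fixed element of the dual space. Your Step Two is also fine (in fact simpler than you suggest: if $S(t_0-t_n)\vec u(t_n)\rightharpoonup \vec v_0\neq 0$, then $S(\cdot-t_0)$ applied to this sequence converges weakly in $L^4(J\times\R^3)$ for any compact window $J$, and weak lower semicontinuity of the norm already contradicts the Strichartz vanishing; no profile decomposition is needed).

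The genuine gap is the mechanism you propose for Step One, which you claim "is the content" of the cited argument; it is not, and as written it proves the opposite inequality. Combining the blow-up criterion~\eqref{blow up} with Lemma~\ref{pert} runs as follows: if $\| S(\cdot-t)\vec u(t)\|_{L^4_{s,x}((t,T_+)\times\R^3)}$ were below the small-data threshold, then comparing $u$ with the free wave $v=S(\cdot-t)\vec u(t)$ (whose error is $\pm v^3$, small in $L^{4/3}_{t,x}$, while the error of $u$ itself is zero and $\vec w_0\equiv 0$) would give $\|u\|_{L^4((t,T_+))}<\infty$, contradicting~\eqref{blow up}. This yields a uniform \emph{lower} bound on the forward Strichartz norm of the free part near $T_+$, not the vanishing of its Strichartz norm on compact subintervals $J\subset I$; and knowing that some fixed amount $\eta>0$ of $L^4$ norm sits on $J$ does not make the remainder on $(t,T_+)$ small (it is only bounded by $A$), so no contradiction can be extracted in the direction you want. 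The actual proofs in \cite{TVZ08,Shen} argue differently: one pairs $S(t_0-t)\vec u(t)$ against a test function and uses the almost periodicity (the spatial/frequency tightness at scale $N(t)$ from~\eqref{3.3}) together with dispersive decay of the free flow when $|t-t_0|N(t)\to\infty$, or the behavior of $N(t)$ at a finite endpoint, to show the pairing tends to zero; the perturbation lemma plays no role there. Since you ultimately plan to cite Steps One and Two and only write out Step Three, your write-up would be correct and would coincide with the paper's; but the sketch you give could not replace that citation.
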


\section{Additional regularity for critical elements}\label{dd}
In this section we show that the critical element $\vec u(t)$ from Section~\ref{con comp} has additional regularity for $t \in I$. In particular, we prove the following result. 

\begin{thm}\label{t4.1} Let $\vec u(t)$ be a solution to~\eqref{u eq} defined on a time interval $I = (T_-, \infty)$ with $T_-<0$  and suppose that $\vec u(t)$ has the compactness property on $I$ with $N(t) \le 1$ for all $t \in [0, \infty)$. Then for each $t \in I$ we have $\vec u(t) \in \dot{H}^1 \times L^2$ and the estimate  
\begin{equation}\label{4.1}
\| \vec u(t) \|_{\dot{H}^{1} \times L^2(\R^{3})}  \lesssim N(t)^{1/2}.
\end{equation}
holds with a constant that is uniform for $t \in I$. 
\end{thm}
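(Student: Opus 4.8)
The plan is to implement the ``double Duhamel trick'' to upgrade the regularity of the critical element from $\dot H^{1/2}\times\dot H^{-1/2}$ to $\dot H^1\times L^2$, with the quantitative gain measured against $N(t)^{1/2}$. First I would fix $t_0\in I$ and, using Lemma~\ref{lem weak}, write $\vec u(t_0)$ as the weak limit of the two one-sided Duhamel integrals: the forward integral $\int_{t_0}^{T}S(t_0-s)(0,\pm u^3)\,ds$ as $T\nearrow\infty$ and the backward integral $-\int_{T}^{t_0}S(t_0-s)(0,\pm u^3)\,ds$ as $T\searrow T_-$. To estimate a frequency-localized piece $P_k\vec u(t_0)$ in $\dot H^1\times L^2$, I would test $P_k$ of the forward integral against $P_k$ of the backward integral in the $\dot H^1\times L^2$ pairing; because both integrals converge weakly to $\vec u(t_0)$, this double integral represents $\|P_k\vec u(t_0)\|_{\dot H^1\times L^2}^2$ (up to harmless constants, after a limiting argument justified by the uniform bounds). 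One is then left to bound
\[
\int_{t_0}^{\infty}\!\!\int_{T_-}^{t_0}\Big| \big\langle P_k S(t_0-s)(0,\pm u^3(s)),\, P_k S(t_0-\tau)(0,\pm u^3(\tau))\big\rangle_{\dot H^1\times L^2}\Big|\,d\tau\,ds.
\]

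The heart of the matter is the dispersive decay estimate for the kernel $P_k S(t_0-s)S(t_0-\tau)^* P_k$ acting between the relevant spaces, combined with good control on $\|u^3(s)\|$ in a suitable (negative-regularity) norm. I would use the radial improved Strichartz/decay bounds — specifically Lemma~\ref{lem rad se} (radial Sobolev embedding) and Bernstein (Lemma~\ref{lem bern}) — to control $u^3$ in terms of the critical norm, and the $L^1\to L^\infty$-type dispersive decay of the wave propagator at frequency $2^k$, which gives a factor like $2^{2k}|s-\tau|^{-1}$ or $2^{3k/2}\langle 2^k(s-\tau)\rangle^{-1}$ on the localized pairing. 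The compactness property enters decisively: via Remark~\ref{Ceta}, for $|s-\tau|$ large the relevant frequency-localized pieces of $u^3$ are concentrated at frequencies $\sim N(s)$, $N(\tau)\le1$, and carry only a small fraction of the critical norm on the complementary region; this is what makes the double time integral converge rather than merely be bounded pointwise. The bookkeeping splits the $(s,\tau)$-integral into a near-diagonal region $|s-\tau|\lesssim 2^{-k}$ (handled by Strichartz/Hölder with no decay needed) and a far region $|s-\tau|\gtrsim 2^{-k}$ (handled by dispersive decay, with the $N(t)\le1$ hypothesis ensuring integrability). Summing the resulting frequency-envelope bound over $k$ and tracking the scaling gives the $N(t_0)^{1/2}$ weight: one rescales so that $N(t_0)=1$, proves the estimate $\|\vec u(t_0)\|_{\dot H^1\times L^2}\lesssim 1$ at unit scale, and undoes the scaling, noting that $\dot H^1\times L^2$ scales with an extra half-power of $N$ relative to the scaling-invariant $\dot H^{1/2}\times\dot H^{-1/2}$.

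The main obstacle, I expect, is making the double-Duhamel pairing rigorous — the two one-sided integrals converge only weakly, not strongly, in the critical space, so one cannot naively pair them; the standard fix is to insert the frequency cutoff $P_k$, work with truncated time integrals $\int_{t_0}^{T_2}$ and $\int_{T_1}^{t_0}$, obtain bounds uniform in $T_1\to T_-$, $T_2\to\infty$, and only then pass to the limit, using that $P_k$ of the truncated integrals converges \emph{strongly} in $\dot H^1\times L^2$ on compact time intervals (by local well-posedness and the improved integrability of the nonlinearity) while the weak convergence identifies the limit as $P_k\vec u(t_0)$. A secondary technical point is that a priori we only know $u\in L^4_tL^4_x$ locally and $\vec u\in\dot H^{1/2}\times\dot H^{-1/2}$, so to even make sense of $u^3$ in a space dual to $\dot H^1$-localized waves one must first run a bootstrap: establish a preliminary (non-sharp) regularity gain by the same scheme with cruder exponents, then iterate/upgrade to reach $\dot H^1\times L^2$ and finally the sharp $N(t)^{1/2}$ weight. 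I would organize the proof as: (i) set up frequency-localized double Duhamel and justify the limiting procedure; (ii) prove the key bilinear kernel estimate using radial dispersive decay; (iii) close a frequency-envelope bootstrap to get $\vec u(t)\in\dot H^1\times L^2$ with a frequency envelope; (iv) exploit $N(t)\le1$ and rescaling to extract the quantitative bound~\eqref{4.1}.
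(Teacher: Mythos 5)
Your high-level framework matches the paper's: fix $t_0$, use Lemma~\ref{lem weak} to identify $\vec u(t_0)$ with weak limits of the forward and backward Duhamel integrals, pair frequency-localized truncations, get bounds uniform in $T_1,T_2$, and run a two-stage bootstrap (the paper first proves $\|\vec u(t_0)\|_{\dot H^{2/3}\times\dot H^{-1/3}}\lesssim N(t_0)^{1/6}$ via a frequency-envelope argument, then upgrades to $\dot H^1\times L^2$). But there is a genuine gap at the heart of your step (ii). You propose to control the far-time region of the double integral by the dispersive decay of the frequency-localized propagator, quoting factors like $2^{2k}|s-\tau|^{-1}$ or $2^{3k/2}\langle 2^k(s-\tau)\rangle^{-1}$. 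In $3$ dimensions the wave propagator only decays like $|s-\tau|^{-1}$, which is not integrable over the bi-infinite region $\{s>t_0,\ \tau<t_0\}$; the double time integral simply does not converge from this bound, and the appeal to Remark~\ref{Ceta} does not repair it, since the compactness modulus gives spatial concentration at scale $1/N(t)$ but no quantitative decay in $|s-\tau|$ (think of the soliton-like case $N\equiv 1$, where nothing improves as $|s-\tau|\to\infty$). The paper's actual mechanism is different and is the key idea you are missing: split the nonlinearity relative to the light cone, $u^3=\chi(\tfrac{x}{c|t|})u^3+(1-\chi)(\tfrac{x}{c|t|})u^3$. The exterior piece is handled by the \emph{weighted} radial Sobolev estimate $\||x|^{3/2}u^3\|_{L^2}\lesssim\|u\|_{\dot H^{1/2}}^3$, which yields the integrable decay $\|(1-\chi)(\tfrac{x}{c|t|})u^3\|_{L^2}\lesssim |t|^{-3/2}$ (estimates \eqref{4.18}--\eqref{4.19}); the interior-interior pairing is then annihilated by the \emph{sharp Huygens principle}: since $|t-\tau|>|t|,|\tau|$, the propagated interior piece is supported in $\{|x|>\tfrac34|t-\tau|\}$ while the other interior piece lives in $\{|x|\le\tfrac12|t|\}$, so after mollifying with $Q_M$ (whose kernel is compactly supported — this is precisely why the paper uses $Q_M$ rather than $P_k$ at the $\dot H^1$ level) the pairing vanishes identically, cf.~\eqref{4.23}; in the $P_k$-based first stage the same geometry gives rapid decay via non-stationary phase, \eqref{4.76.2}. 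Without this light-cone decomposition your far-region estimate fails.

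Two secondary points. First, your near-diagonal region cannot be closed by ``Strichartz/H\"older with no decay needed'' using global norms, since global spacetime bounds on $u$ are exactly what is unavailable; the paper instead proves refined \emph{local} Strichartz estimates on windows $[t_0-\delta/N(t_0),t_0+\delta/N(t_0)]$ (Lemma~\ref{l4.2}, Corollary~\ref{Cor: strich}, Lemma~\ref{l4.4}), and it is these window estimates, with length $\sim\delta/N(t_0)$, that produce the weights $N(t_0)^{1/2}$ and $N(t_0)^{1/6}$ — your alternative of rescaling to $N(t_0)=1$ is plausible in spirit but not automatic, because the hypothesis $N(t)\le 1$ on $[0,\infty)$ is not preserved by that rescaling and the uniformity of constants must be traced through the compactness modulus. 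Second, the limiting procedure you describe (truncate, bound uniformly, pass to weak limits via Lemma~\ref{lem weak}) is indeed how the paper argues, so that part of your plan is sound; the missing ingredient is solely the interior/exterior cone splitting with weighted radial Sobolev decay plus sharp Huygens, in place of plain dispersive decay.
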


\begin{rem} We note that all constants   this section implicit in the symbol $\lesssim$ will be allowed to depend on the $L^{\infty}_t(I;  \dot{H}^{\frac{1}{2}} \times \dot{H}^{-\frac{1}{2}})$ norm of $ \vec u$, which is fixed. 

\end{rem}

We will prove Theorem~\ref{t4.1} using a bootstrap procedure with two steps. In particular, we will first show that if  $\vec u(t)$ has the compactness property on an interval $I$ as in Theorem~\ref{t4.1}, then $\vec u(t)$ must lie in $\dot{H}^{\frac{2}{3}} \times \dot{H}^{-\frac{1}{3}}$. We then use this result to attain Theorem~\ref{t4.1}.  

\begin{prop}\label{h23} Let $\vec u(t)$ be as in Theorem~\ref{t4.1}. Then  for any $t_{0} \in I$, we have 
\begin{equation}\label{4.2}
\| \vec u(t_{0}) \|_{\dot{H}^{\frac{2}{3}} \times \dot{H}^{-\frac{1}{3}}(\R^{3})} \lesssim N(t_{0})^{1/6}.
\end{equation}
\end{prop}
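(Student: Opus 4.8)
The plan is to upgrade the regularity of the critical element from $\dot H^{1/2}\times\dot H^{-1/2}$ to $\dot H^{2/3}\times\dot H^{-1/3}$ by running the ``double Duhamel trick'' at the level of a frequency-envelope bound. The starting point is Lemma~\ref{lem weak}: for a fixed $t_0\in I$ we write $\vec u(t_0)$ as the weak limit of $\int_{t_0}^{T} S(t_0-s)(0,\pm u^3)\,ds$ as $T\nearrow\infty$ and, separately, as the weak limit of $-\int_{T'}^{t_0} S(t_0-s)(0,\pm u^3)\,ds$ as $T'\searrow T_-$. Projecting onto a single dyadic frequency $P_N$ and pairing the ``future'' Duhamel representation against the ``past'' one in $\dot H^{2/3}\times\dot H^{-1/3}$ (which is legitimate since each truncated integral is a genuine $C(I;\dot H^{1/2}\times\dot H^{-1/2})$ function and the pairing is continuous), one obtains
\[
\|P_N \vec u(t_0)\|_{\dot H^{2/3}\times\dot H^{-1/3}}^2 \lesssim \int\!\!\int_{I\times I} \big|\langle P_N S(t_0-s)(0,u^3(s)),\, P_N S(t_0-\tau)(0,u^3(\tau))\rangle_{\dot H^{2/3}\times\dot H^{-1/3}}\big|\, ds\, d\tau,
\]
where $s$ ranges over the future of $t_0$ and $\tau$ over the past. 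The inner product collapses, after moving the propagators together, to an expression involving $S(s-\tau)$ acting on a frequency-localized nonlinearity, and the dispersive decay of $\|P_N S(\sigma)(0,f)\|$ combined with the kernel bound for the half-wave propagator gives a factor that decays in $N|s-\tau|$; this is exactly the mechanism that was used in \cite{Tao07, KV11TAMS, Bul12a, Bul12b} and sketched in the outline.

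The key quantitative inputs are: (i) a Strichartz/Bernstein bound controlling $\|P_N u^3\|$ in an appropriate dual Strichartz space in terms of $\|u\|_{L^4_tL^4_x}$ on a suitable window, together with the a priori $\dot H^{1/2}\times\dot H^{-1/2}$ bound and the compactness property; (ii) the tail smallness from Remark~\ref{Ceta}, which lets one truncate both the spatial region and the frequency support of $u^3$ near the scale $N(t)$, converting the self-similar-type divergence into something summable; and (iii) the hypothesis $N(t)\le 1$ on $[0,\infty)$, which is what produces the power $N(t_0)^{1/6}$ on the right-hand side rather than a mere finite bound. Concretely, I expect the double-integral to be estimated by splitting $I\times I$ according to whether $|s-\tau|\lesssim N^{-1}$ or $\gg N^{-1}$, using the trivial (energy-type) bound on the short-time piece and the dispersive decay on the long-time piece, and then summing the resulting frequency envelope $\beta_N$ against itself; the envelope turns out to satisfy $\beta_N\lesssim N^{1/6}\cdot(\text{something in }\ell^2)$ near the relevant scale, whence $\|\vec u(t_0)\|_{\dot H^{2/3}\times\dot H^{-1/3}}\lesssim N(t_0)^{1/6}$ after accounting for the rescaling by $N(t_0)$ built into the set $K$.

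The main obstacle, I expect, is the bookkeeping in the double-integral estimate: one has to control the nonlinearity $u^3$ uniformly over the whole half-line $[0,\infty)$ (where $\|u\|_{L^4_tL^4_x}$ need not be finite a priori — indeed proving it finite is the ultimate goal), so the argument must be localized in time to windows on which the Strichartz norm is controlled, and these local bounds then have to be reassembled using the compactness property and the monotonicity/boundedness of $N(t)$. Getting the precise power of $N(t_0)$ requires carefully tracking how the spatial and frequency truncations at scale $N(s)$, $N(\tau)$ interact with the dispersive factor $\langle N|s-\tau|\rangle^{-1}$ and with the fact that $N$ can vary along the flow; the inequality $N(t)\le 1$ and the local-constancy of $N$ (another standard consequence of compactness) are what make this tractable. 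A secondary technical point is justifying the interchange of the weak limits with the $\dot H^{2/3}\times\dot H^{-1/3}$ pairing and the Fubini step for the double integral, which is handled by first working with the truncated integrals $\int_{t_0}^{T}$, $\int_{T'}^{t_0}$ and passing to the limit at the end using weak lower semicontinuity of the norm.
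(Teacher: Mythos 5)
Your overall architecture --- double Duhamel via Lemma~\ref{lem weak}, dyadic frequency localization, frequency envelopes, and time-localized Strichartz control coming from compactness --- matches the paper's, but the engine you propose for the far-time part of the double integral does not work, and that is exactly where the real content of the paper's proof lies. You plan to pair the future and past Duhamel integrals and win from a dispersive factor decaying in $N|s-\tau|$, splitting into $|s-\tau|\lesssim N^{-1}$ and $|s-\tau|\gg N^{-1}$. For the $3d$ wave equation the frequency-localized dispersive estimate only yields $\langle 2^{k}|s-\tau|\rangle^{-1}$, and since $s$ and $\tau$ range over opposite half-lines one has $|s-\tau|=s+|\tau|$, so $\int_0^\infty\!\!\int_{-\infty}^0\langle 2^{k}|s-\tau|\rangle^{-1}\,d\tau\,ds=\infty$ (one needs decay strictly faster than $|s-\tau|^{-2}$ for the double time integral over two half-lines to converge); hence the long-time piece of your split does not close. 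The paper never invokes the $|t|^{-1}$ dispersive bound. Instead each Duhamel integral is cut into three pieces: a near-time piece, $|t-t_0|\le\delta/N(t_0)$, controlled through the smallness of $\|u\|_{L^4_{t,x}}$ on that window (Lemma~\ref{l4.4}, a consequence of compactness) fed into the frequency-envelope bootstrap of Claim~\ref{a al claim}; a far-time \emph{exterior} piece $(1-\chi)(x/c|t|)u^{3}(t)$, which is absolutely integrable in time because the weighted radial Sobolev embedding (Lemma~\ref{lem rad se}) gives $\|(1-\chi)(x/c|t|)u^{3}(t)\|\lesssim |t|^{-1-s_0}$ in the relevant norm --- a single time integral, so no gain from the double-integral structure is needed there; and a far-time \emph{interior} piece $\chi(x/c|t|)u^{3}(t)$, whose pairing across the two time directions is handled by the sharp Huygens principle: $t>0>\tau$ forces $|t-\tau|>|t|,|\tau|$, the free evolution of the interior-supported data lives near $|x|\sim|t-\tau|$ while the other factor sits in $|x|\le |t|/4$, and the kernel of $P_k^{2}e^{i(t-\tau)\sqrt{-\Delta}}(\sqrt{-\Delta})^{-1}$ decays like $\langle 2^{k}|t-\tau|\rangle^{-L}$ for every $L$ because one stays uniformly away from the light cone. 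This is where radiality enters, and your mechanism neither uses it nor can substitute for it.

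Two further points. The tails of Remark~\ref{Ceta} do not do what you assign them: the spatial truncation in the far-time analysis is at the light-cone radius $c|t|$, not at $1/N(t)$, and its usefulness comes from the radial weight, not from compactness; Remark~\ref{Ceta} enters only through Lemma~\ref{l4.4}. The exponent $N(t_0)^{1/6}$ is likewise not a direct output of $N(t)\le 1$: it emerges from the $\ell^2$ envelope $b_k$ produced by a frequency-dependent choice of $s_0(k)$ in the exterior estimate (with $\|\{b_k\}\|_{\ell^2}\lesssim N(0)^{1/6}$), combined with the kernel bound $\min(2^{-k/2}N(0)^{1/2},1)$ for the interior-interior term, all assembled through the envelope inequality $\alpha_k\lesssim\alpha_k(0)$ and Schur's test. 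Finally, the passage to the limit is not a matter of weak lower semicontinuity: one writes $\|P_k v(0)\|^2$ as an iterated limit of pairings of a weakly convergent family against a fixed vector, using the Duhamel identity and Lemma~\ref{lem weak} to dispose of the cross terms containing the free evolution; the estimates must therefore be uniform in the truncation parameters $T_1,T_2$, which the paper's decomposition provides.
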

We  momentarily postpone the proof of Proposition~\ref{h23} and use it to deduce Theorem~\ref{t4.1}. 
\subsection{Proof of Theorem~\ref{t4.1} assuming Proposition~\ref{h23}}
The first step is to establish refined  Strichartz estimates. 



\begin{lem}\label{l4.2} Let $\vec u(t)$ satisfy the conclusions of Proposition~\ref{h23}. Then
there exists $\delta > 0$ sufficiently small such that for any $t_{0} \in I$,

\begin{equation}\label{4.3}
\| u \|_{L_{t}^{3} L_{x}^{6}([t_{0} - \delta/ N(t_{0}), t_{0} + \delta /N(t_{0})] \times \R^{3})} \lesssim N(t_{0})^{1/6}.
\end{equation}
\end{lem}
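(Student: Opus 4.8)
\textbf{Proof proposal for Lemma~\ref{l4.2}.}

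The plan is to run the Strichartz estimate on the short time interval $I_{t_0} := [t_0 - \delta/N(t_0), t_0 + \delta/N(t_0)]$ using the improved regularity $\dot H^{2/3}\times\dot H^{-1/3}$ furnished by Proposition~\ref{h23}, and to absorb the nonlinear contribution by the shortness of the interval together with a continuity (bootstrap) argument. Concretely, I would apply Proposition~\ref{prop strich} with $s=2/3$ and the $L^3_tL^6_x$ Strichartz pair: one checks $(3,6)$ is admissible and satisfies the gap condition $\frac{1}{3}+\frac{3}{6} = \frac{3}{2}-\frac{2}{3}$, which forces the scaling to match the $\dot H^{2/3}$ data. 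This gives
\EQ{
\| u\|_{L^3_tL^6_x(I_{t_0}\times\R^3)} \lesssim \|\vec u(t_0)\|_{\dot H^{2/3}\times\dot H^{-1/3}} + \|u^3\|_{L^{a'}_tL^{b'}_x(I_{t_0}\times\R^3)},
}
where $(a',b')$ is the conjugate of the dual admissible pair dictated by the same gap condition at regularity $2/3$. By Proposition~\ref{h23} the linear term is $\lesssim N(t_0)^{1/6}$, so everything reduces to estimating the cubic term.

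The key point is to bound $\|u^3\|_{L^{a'}_tL^{b'}_x}$ on $I_{t_0}$ by a Hölder-in-time argument that produces a gain of a power of $\delta$ (or $\delta/N(t_0)$) together with the $L^3_tL^6_x$ norm itself — essentially $\|u^3\|_{L^{a'}_tL^{b'}_x(I_{t_0})} \lesssim |I_{t_0}|^{\theta}\,\|u\|_{L^3_tL^6_x(I_{t_0})}^3$ for an appropriate $\theta>0$, after choosing the exponents so that $b'$ matches $3$ copies of $L^6_x$ in space (i.e. $L^{b'}_x = L^2_x$, since $1/6+1/6+1/6 = 1/2$) and the temporal integrability is gained by Hölder since $3\cdot\frac13 = 1 > a'$ requires a spare exponent. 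Here the scaling bookkeeping must be done carefully: the bound $\|u\|_{L^3_tL^6_x}\lesssim N(t_0)^{1/6}$ is scale-consistent precisely because $|I_{t_0}|\sim \delta/N(t_0)$, so the $\delta$-power gain from Hölder comes out as a genuine small constant $\delta^{\theta}$ independent of $N(t_0)$ after all the $N(t_0)$ powers cancel. Combining, $\|u\|_{L^3_tL^6_x(I_{t_0})} \lesssim N(t_0)^{1/6} + \delta^{\theta}\,\|u\|_{L^3_tL^6_x(I_{t_0})}^3$; choosing $\delta$ small and invoking a standard bootstrap/continuity argument in the length of the subinterval (using local well-posedness to know the $L^3_tL^6_x$ norm is finite and continuous, starting from a subinterval where it is small) closes the estimate and yields \eqref{4.3}.

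The main obstacle I anticipate is the exponent arithmetic: one must verify that there exists an admissible pair on the right-hand side of the Strichartz estimate, compatible with the gap condition at $s=2/3$, for which the cubic nonlinearity $u^3$ can be placed using only the single norm $L^3_tL^6_x$ — possibly after also interpolating with the trivially controlled $L^\infty_t(\dot H^{2/3}\times\dot H^{-1/3})$ bound — while still leaving a positive power of $|I_{t_0}|$ from Hölder in time. If the naive choice $L^{b'}_x = L^2_x$ does not leave temporal room, the fix is to split $u^3$ so that two factors are measured in $L^3_tL^6_x$ and one factor in a space controlled by Bernstein (Lemma~\ref{lem bern}) or the radial Sobolev embedding (Lemma~\ref{lem rad se}) applied to the $\dot H^{2/3}$ bound; radiality and $N(t)\le 1$ should give enough slack. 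Everything else — admissibility of the endpoint-type pairs in the radial setting, the uniformity of implicit constants in $t_0$ — is routine given the machinery already set up in Section~2 and the scale invariance of all the quantities involved.
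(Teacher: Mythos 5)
Your proposal is correct and follows essentially the same route as the paper's proof: a Strichartz estimate at regularity $s=2/3$ on the interval of length $\sim \delta/N(t_0)$, with the linear term bounded by \eqref{4.2} and the cubic term absorbed via a H\"older-in-time gain of a power of $\delta$ and a bootstrap whose closing relies on the uniformity in $t_0$ of the constant in \eqref{4.2}. As you anticipated, the naive placement $u^3 \in L^{a'}_t L^2_x$ is incompatible with the gap condition at $s=2/3$ (it corresponds to $\dot H^1$ data), and the paper implements exactly your proposed fallback: it works in $Y = L^\infty_t L^{18/5}_x \cap L^3_t L^6_x$ (the first norm being controlled by $L^\infty_t \dot H^{2/3}$ via Sobolev) and estimates $\| u^3\|_{L^{6/5}_t L^{3/2}_x} \lesssim (\delta/N(t_0))^{1/3} \| u\|_{L^3_t L^6_x}^{3/2} \| u\|_{L^\infty_t L^{18/5}_x}^{3/2}$, after which the $N(t_0)$ powers cancel exactly as you describe.
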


\begin{proof} To simplify notation let $J = [t_{0} - \delta/ N(t_{0}), t_{0} + \delta /N(t_{0})]$. We also let $Y = L_{t}^{\infty} L_{x}^{18/5} \cap L_{t}^{3} L_{x}^{6}$ with the natural norm. Using Strichartz estimates, we have
\EQ{\label{4.4}
&\| u \|_{Y(J \times \R^{3})} \lesssim \| \vec u(t_{0}) \|_{\dot{H}^{2/3} \times \dot{H}^{-1/3}(\R^{3})} 
+ \| u^{3} \|_{L_{t}^{6/5} L_{x}^{3/2}(J \times \R^{3})}\\
 &\lesssim N(t_{0})^{1/6} + (\delta /N(t_{0}))^{1/3} \| u \|_{L_{t}^{3} L_{x}^{6}(J \times \R^{3})}^{3/2} \| u \|_{L_{t}^{\infty} L_{x}^{18/5}(J \times \R^{3})}^{3/2} \\& \lesssim N(t_{0})^{1/6} + \de^{\frac{1}{3}}  \bigg( N(t_{0})^{-1/6} \| u \|_{Y(J \times \R^{3})} \bigg)^2  \| u \|_{Y(J \times \R^{3})}.
}
Choosing  $\delta   = 
\delta(N(t_{0})^{-1/6} \| u(t_{0}) \|_{\dot{H}^{2/3} \times \dot{H}^{-1/3}}) > 0,$  small enough  the lemma follows by a standard bootstrapping argument. We remark that here it is important that the constant in~\eqref{4.2} is uniform in $t_0 \in I$. 
\end{proof}
An immediate consequence of Lemma~\ref{l4.2} is the following estimate. 
\begin{cor} \label{Cor: strich}There exists $\de >0$ so that for each $t_0 \in I$ we have 
\begin{equation}\label{4.6}
\| u^{3} \|_{L_{t}^{1} L_{x}^{2}([t_{0} - \delta /N(t_{0}), t_{0} + \delta/ N(t_{0})] \times \R^{3})} \lesssim N(t_{0})^{1/2}.
\end{equation}
\end{cor}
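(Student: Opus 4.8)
\textbf{Proof proposal for Corollary~\ref{Cor: strich}.}

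The plan is to deduce the $L^1_t L^2_x$ bound directly from the $L^3_t L^6_x$ bound in Lemma~\ref{l4.2}, by combining it with an $L^\infty_t$ control on a suitable spatial Lebesgue norm that comes from the compactness hypothesis. First I would write $\|u^3\|_{L^1_t L^2_x(J\times\R^3)} = \|u\|_{L^3_t L^6_x(J\times\R^3)}^2 \cdot \|u\|_{L^\infty_t L^{\,q}_x(J\times\R^3)}$ for the exponent $q$ dictated by H\"older in space, namely $\tfrac{3}{6} = \tfrac{2}{6} + \tfrac{1}{q}$, which forces $q = 6$; alternatively one splits as $\|u\|_{L^3_t L^6_x}^{3/2}\|u\|_{L^\infty_t L^{18/5}_x}^{3/2}$ exactly as in the proof of Lemma~\ref{l4.2}, using the full $Y$-norm. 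Either way, the point is that $J$ has length $\simeq \delta/N(t_0)$, and the time integration of the $L^\infty_t$ factor is harmless.

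For the time-dependent factor I would use Lemma~\ref{l4.2}, which gives $\|u\|_{L^3_t L^6_x(J\times\R^3)} \lesssim N(t_0)^{1/6}$, so that $\|u\|_{L^3_t L^6_x(J\times\R^3)}^{3/2} \lesssim N(t_0)^{1/4}$. For the remaining factor I would use the $L^\infty_t$ piece of the $Y$-norm estimate~\eqref{4.4}, which also controls $\|u\|_{L^\infty_t L^{18/5}_x(J\times\R^3)} \lesssim N(t_0)^{1/6}$, hence $\|u\|_{L^\infty_t L^{18/5}_x(J\times\R^3)}^{3/2}\lesssim N(t_0)^{1/4}$. Multiplying, one obtains the time-integrated H\"older estimate
\EQ{
\| u^3\|_{L^1_t L^2_x(J\times\R^3)} \lesssim (\delta/N(t_0))^{1/3}\,\|u\|_{L^3_t L^6_x(J)}^{3/2}\,\|u\|_{L^\infty_t L^{18/5}_x(J)}^{3/2} \lesssim (\delta/N(t_0))^{1/3}\, N(t_0)^{1/2},
}
where the $(\delta/N(t_0))^{1/3}$ arises from H\"older in time between $L^1_t$ and $L^{6/5}_t$ (the time exponent of $u^3$ when $u\in L^3_t$), exactly as in the passage from the second to the third line of~\eqref{4.4}. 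Since $N(t_0)\le 1$, the factor $(\delta/N(t_0))^{1/3}N(t_0)^{1/2} = \delta^{1/3} N(t_0)^{1/6}$ is in fact better than $N(t_0)^{1/2}$; but the cruder bound $\lesssim N(t_0)^{1/2}$ (absorbing $\delta^{1/3}$ and powers of $N(t_0)\le 1$ into the implicit constant, or simply keeping $(\delta/N(t_0))^{1/3}\lesssim \delta^{1/3}N(t_0)^{-1/3}$ and multiplying by the $N(t_0)^{1/2}$ already present) is all that is claimed, so no sharpness is lost. One should be slightly careful that the constant $\delta$ is the one furnished by Lemma~\ref{l4.2}, uniform in $t_0\in I$, so the final bound is uniform as well.

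There is essentially no obstacle here: the corollary is a bookkeeping consequence of Lemma~\ref{l4.2}, and the only thing to watch is the choice of H\"older exponents (both in space, to land in $L^2_x$, and in time, to land in $L^1_t$ from the $L^3_t$ and $L^\infty_t$ inputs) and the uniformity of $\delta$. If I wanted to be maximally economical I would simply observe $\|u^3\|_{L^1_t L^2_x(J)} \le \|u\|_{L^3_t L^6_x(J)}^{2}\|u\|_{L^\infty_t(J;\dot H^{1/2})}$ after noting $\dot H^{1/2}(\R^3)\hookrightarrow L^3_x$ and the pointwise identity $|u|^3 = |u|^2\cdot|u|$ with $L^2_x = L^6_x\cdot L^6_x / L^\infty$... but in fact the cleanest route keeps the $L^\infty_t L^{18/5}_x$ factor since that is already estimated by $N(t_0)^{1/6}$ in~\eqref{4.4}, giving the stated power of $N(t_0)$ immediately.
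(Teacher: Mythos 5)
There is a genuine gap here, and ironically the corollary is easier than what you attempted. The correct route is the one-line computation
\EQ{
\| u^3\|_{L^1_t L^2_x(J\times\R^3)} \;=\; \big\| \,\|u(t)\|_{L^6_x}^3 \big\|_{L^1_t(J)} \;=\; \|u\|_{L^3_t L^6_x(J\times\R^3)}^3 \;\lesssim\; \big(N(t_0)^{1/6}\big)^3 \;=\; N(t_0)^{1/2},
}
using Lemma~\ref{l4.2} with the same $\de$ (which is uniform in $t_0$); this is why the paper records the corollary as an immediate consequence, with no interval-length factor and no $L^\infty_t$ input needed. Your splittings do not produce the claimed left-hand side: H\"older in space with $\|u\|_{L^6_x}^{3/2}\|u\|_{L^{18/5}_x}^{3/2}$ controls $\|u^3\|_{L^{3/2}_x}$ (since $\tfrac{3/2}{6}+\tfrac{3/2}{18/5}=\tfrac23$), not $\|u^3\|_{L^2_x}$, and the same spatial mismatch ruins the ``economical'' variant with $\dot H^{1/2}\hookrightarrow L^3_x$ at the end ($\tfrac26+\tfrac13=\tfrac23\neq\tfrac12$). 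Your first split, $\|u\|_{L^3_tL^6_x}^2\|u\|_{L^\infty_tL^6_x}$, is consistent in space but the $L^\infty_t L^6_x$ norm is not available at this stage of the bootstrap: it is comparable to the uniform $\dot H^1$ control that Theorem~\ref{t4.1} is in the middle of establishing, so invoking it would be circular (Proposition~\ref{h23} only gives $\dot H^{2/3}\hookrightarrow L^{18/5}_x$), and the time exponents in the ``$=$'' you wrote do not add up either ($\tfrac23+0\neq 1$).

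Separately, even if the H\"older steps were repaired, your final bookkeeping is backwards: since $N(t_0)\le 1$ (and may be arbitrarily small), the quantity $\de^{1/3}N(t_0)^{1/6}$ is \emph{larger} than $N(t_0)^{1/2}$, not smaller, so the bound $(\de/N(t_0))^{1/3}N(t_0)^{1/2}$ does not imply~\eqref{4.6}; negative powers of $N(t_0)$ cannot be absorbed into the implicit constant. The precise power $N(t_0)^{1/2}$ — the cube of the $N(t_0)^{1/6}$ from Lemma~\ref{l4.2} — is exactly what the direct computation above gives, and it is needed later (e.g.\ in~\eqref{4.12} and in the energy-cascade argument where $N(t)\to 0$), so the loss from $N^{1/2}$ to $N^{1/6}$ is not harmless.
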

We are now ready to begin the proof of Theorem~\ref{t4.1} assuming Proposition~\ref{h23}. 
\begin{proof}[Proof that Proposition~\ref{h23} implies Theorem~\ref{t4.1}]
Fix $t_{0} \in I$. By translating in time, we can, without loss of generality assume that $t_{0} = 0$. Let
\begin{equation}\label{4.7}
v = u + \frac{i}{\sqrt{-\Delta}} u_{t}.
\end{equation}
Then we have 
\EQ{\label{v u H1}
\| v(t) \|_{\dot{H}^1} \simeq \| \vec u(t) \|_{\dot{H}^1 \times L^2}
}
And if $\vec u(t)$ solves~\eqref{u eq}, then $v(t)$ is a solution to 
\begin{equation}\label{4.8}
v_{t} = -i \sqrt{-\Delta} v  \pm \frac{ i}{\sqrt{-\Delta}} u^{3}.
\end{equation}
where $+$ above corresponds to the focusing equation and $-$ to the defocusing equation. 
By Duhamel's principle, for any $T$, $T_- < T < 0$,
\begin{equation}\label{4.9}
v(0) = e^{iT \sqrt{-\Delta}} v(T) \pm \frac{i}{\sqrt{-\Delta}} \int_{T}^{0} e^{i \tau \sqrt{-\Delta}} u^{3}(\tau) d\tau.
\end{equation}

Next, we define an approximate identity $\{\psi_M\}_{M>0}$. Indeed, let $\psi  \in C^{\infty}_0(\R^3)$ be a radial function, normalized in $L^1(\R^3)$ so that $\|\psi\|_{L^1} = 1$. Set $\psi_M(x) := M^3 \psi(Mx)$. We then define the operator $Q_M$ given by convolution with $\psi_M$, i.e., 
\EQ{ \label{QM def}
Q_M f (x)  := \int_{\R^3} \psi_M(x-y) f(y) \, dy
}
Of course  $Q_M$ is also a Fourier multiplier operator, given by multiplication on the Fourier side by $ \widehat{\psi_M}$, where $\widehat{ \psi_M }(\xi)= \hat \psi \left( \frac{\xi}{M}\right)$. Since $\psi \in C^{\infty}_0$ we have that $\hat \psi \in \cS(\R^3)$. 

With this set-up, it is clear that it suffices to prove that there exists an $M_0>0$ so that 
\EQ{
\| Q_M v(0)\|_{\dot{H}^1} \lesssim N(0)^{\frac{1}{2}}
}
for all $M \ge M_0>0$ with a constant that is independent of $M$. 
 
To begin, let $T_-<T_1<0<T_2< \infty$ and let $M$ be a large number to be determined below. By the Duhamel formula we have 
\begin{equation}\label{4.10}
\aligned
\Big\langle Q_M v(0), Q_M v(0) \Big\rangle_{\dot{H}^{1}} = \bigg\langle Q_M \left(e^{i T_{2} \sqrt{-\Delta}} v(T_{2}) \mp \frac{i}{\sqrt{-\Delta}} \int_{0}^{T_{2}} e^{it \sqrt{-\Delta}} u^{3} dt\right), \\ Q_M\left( e^{i T_{1}\sqrt{-\Delta}} v(T_{1}) \pm \frac{i}{\sqrt{-\Delta}} \int_{T_1}^{0} e^{i \tau \sqrt{-\Delta}} u^{3} d\tau\right) \bigg\rangle_{\dot{H}^{1}}.
\endaligned
\end{equation}
where here the bracket $\ang{ \cdot, \cdot}_{\dot{H}^1}$ is the $\dot{H}^1$ inner product, namely $$\ang{f, g}_{\dot{H}^1}  = \Re \int_{\R^3} \sqrt{- \Delta }f  \cdot  \ba{ \sqrt{-\Delta} g}.$$ 
 We start by estimating the term that contains both Duhamel terms: 
\begin{align} \notag
&\abs{\left\langle Q_M\bigg(\frac{i}{\sqrt{-\Delta}} \int_{0}^{T_{2}} e^{it \sqrt{-\Delta}} u^{3}(t) dt\bigg), Q_M \bigg(\frac{i}{\sqrt{-\Delta}} \int_{0}^{T_{1}} e^{i \tau \sqrt{-\Delta}} u^{3}(\tau) d\tau \bigg) \right\rangle_{\dot{H}^{1}}  }\\ &=    \abs{\left\langle Q_M \bigg( \int_{0}^{T_{2}}e^{it \sqrt{-\Delta}} u^{3}(t) \, dt \bigg), \, Q_M \bigg(\int_{T_{1}}^{0}e^{i \tau \sqrt{-\Delta}}  (u^{3}(\tau))\, d\tau  \bigg)\right\rangle_{L^{2}}}. \label{4.11}
\end{align}
With $\de>0$ as in Corollary~\ref{Cor: strich} we use $(\ref{4.6})$ to deduce that 
\begin{equation}\label{4.12}
 \int_{\frac{-\delta }{N(0)}}^{\frac{\delta}{ N(0)}} \| Q_M (u^{3}(t)) \|_{L_{x}^{2}(\R^{3})} dt \lesssim N(0)^{1/2}. 
\end{equation}
Next, define a decreasing, smooth, radial function, $\chi \in C^{\infty}_0(\R^3)$ with $\chi(x)  \equiv 1$ for all $\abs{x} \le 1$ and $\chi(x)  = 0$ if $\abs{x} \ge 2$. 
Also, let $c > 0$ be a small constant, say $c = \frac{1}{4}$. We have
\begin{multline}\label{4.14}
\left\| Q_M\left(\int_{\delta/ N(0)}^{\infty} e^{it \sqrt{-\Delta}} (1 - \chi(\frac{x}{c |t|})) u^3(t) dt \right) \right\|_{L_{x}^{2}(\R^{3})} \lesssim
\\
\lesssim \int_{\delta /N(0)}^{\infty} \left\| (1 - \chi)(\frac{x}{c |t|}) u^{3}(t) \right\|_{L_{x}^{2}(\R^{3})} dt.
\end{multline}
By the radial Sobolev embedding (i.e.,  Lemma~\ref{lem rad se}) we note that 
\begin{equation}\label{4.16}
\| |x|^{3/2} u^{3} \|_{L_{x}^{2}}  \lesssim  \|\abs{x}^{\frac{1}{2}} u\|_{L^6_x}^{3} \lesssim \| u\|_{\dot{H}^{\frac{1}{2}}}
\end{equation}
Therefore,
\begin{equation}\label{4.18}
\left\| (1 - \chi)(\frac{x}{c|t|}) u^{3} \right\|_{L_{x}^{2}(\R^{3})} \lesssim \frac{1}{ |t|^{3/2}} \| u \|_{\dot{H}^{1/2}}^{3}.
\end{equation}
 Thus,
\begin{equation}\label{4.19}
\int_{\delta /N(0)}^{\infty}\left\| Q_M \Big((1 - \chi)(\frac{x}{c |t|}) u^{3}(t)\Big) \right\|_{L_{x}^{2}(\R^{3})} dt \lesssim  \delta^{-1/2} N(0)^{1/2}.
\end{equation}
The same is also true in the negative time direction. With these estimates in hand,  we write~\eqref{4.11} as a pairing 
\EQ{\label{4.25}
\langle A + B, A' + B' \rangle = \langle A + B, A' \rangle + \langle A, A' + B' \rangle + \langle B, B' \rangle - \langle A, A' \rangle
}
where 
\EQ{\label{A B def 1}
&A := Q_M\bigg(\int_{0}^{\delta/ N(0)} e^{it \sqrt{-\Delta}} u^{3} dt + \int_{\delta /N(0)}^{T_{2}} e^{it \sqrt{-\Delta}} (1 - \chi)(\frac{x}{c|t|}) u^{3} dt\bigg)\\
&B :=  Q_M \bigg(\int_{\frac{\delta}{N(0)}}^{T_{2}} e^{it \sqrt{-\Delta}} \chi(\frac{x}{c|t|}) u^{3}(t)  \, dt\bigg)
}
and $A', B'$ are the corresponding integrals in the negative time direction. We start by estimating the term $\ang{A, A'}$. By~\eqref{4.12} and~\eqref{4.19},
\begin{multline}\label{4.20}
\bigg\langle Q_M\bigg(\int_{0}^{\frac{\delta}{ N(0)}} e^{it \sqrt{-\Delta}} u^{3} dt + \int_{\frac{\delta}{N(0)}}^{T_{2}} e^{it \sqrt{-\Delta}} (1 - \chi)(\frac{x}{c|t|})u^{3} dt\bigg), \\ Q_M\bigg(\int_{\frac{-\delta}{N(0)}}^{0} e^{i \tau \sqrt{-\Delta}} u^{3} d\tau + \int_{T_{1}}^{\frac{-\delta}{N(0)}} e^{i \tau \sqrt{-\Delta}} (1 - \chi)(\frac{x}{c|\tau|}) u^{3} d\tau\bigg) \bigg\rangle_{L^{2}}  \lesssim N(0).
\end{multline}
Next, we examine the term $\ang{B, B'}$, which is given by 
\ant{
\int_{T_{1}}^{\frac{-\delta}{N(0)}} \int_{\frac{\delta}{N(0)}}^{T_{2}} \bigg\langle  Q_M\left( e^{it \sqrt{-\Delta}} \chi(\frac{x}{c|t|}) u^{3}(t)\right), Q_M\left(e^{i \tau \sqrt{-\Delta}} \chi(\frac{x}{c |\tau|}) u^{3}(\tau)\right) \bigg\rangle_{L^2} dt d\tau \\ = \int_{T_{1}}^{\frac{-\delta}{N(0)}} \int_{\frac{\delta}{N(0)}}^{T_{2}} \bigg\langle  Q_M\left(\chi(\frac{x}{c|t|}) u^{3}(t)\right), Q_M\left(e^{i (\tau - t) \sqrt{-\Delta}} \chi(\frac{x}{c |\tau|}) u^{3}(\tau)\right) \bigg\rangle_{L^2} dt d\tau.
}
To estimate the above, we begin by noting that  the sharp Huygens principle implies that $$\left(e^{i(t - \tau) \sqrt{-\Delta}} \chi(\frac{\cdot}{c|\tau|}) u^{3}(\tau)\right)(x)$$ is supported on the set $|x| > \frac{3}{4} |t - \tau|$ for $c = \frac{1}{4}$. Also, we note that since $t >0$ and $\tau<0$ we have $|t - \tau| > |t|, |\tau|$. Next, recall that the kernel of $Q_M$ is given by the function $\psi_M(x) = M^{3} \psi(M x)$, where $\psi \in C_{0}^{\infty}$. This implies that for $M \gg N(0)^{-1}$, large enough we have 
\ant{
 &\supp \left(\int_{\R^3}\psi_{M}(x - z)  \chi( \frac{z}{c \abs{t}})u^{3}(t, z)  \, dz\right)  \subset \{\abs{x} < \frac{1}{2} \abs{t}\}\\
 &\supp \left( \int_{\R^3} \psi_M(x-y)\left(e^{i(\tau - t) \sqrt{-\Delta}}  \chi(\frac{\cdot}{c \abs{\tau}})u^{3}(\tau)\right)(y) dy \right) \subset  \{\abs{x} > \frac{1}{2} \abs{ t- \tau} \}
}
Therefore, as long as $M$ is chosen large enough, say for $M \ge M_0 \gg N(0)^{-1}$,  and since $\abs{t}< \abs{t- \tau}$ for $t> \de/N(0)$ and $\tau< - \de/N(0)$, we have 
\begin{multline}\label{4.23}
\bigg\langle \int\psi_{M}(x - z)  \chi( \frac{z}{c \abs{t}})u^{3}(t, z) \, dz,  \\ \int \psi_M(x-y)\left(e^{i(\tau - t) \sqrt{-\Delta}}  \chi(\frac{\cdot}{c \abs{\tau}})u^{3}(\tau)\right)(y) dy \bigg\rangle_{L^2}= 0.
\end{multline}


It remains to estimate the terms $\ang{A, A'+B'}$ and $\ang{A+B, A'}$,  which are given by 
\begin{multline}\label{4.26}
 \bigg\langle Q_M\bigg(\int_{0}^{\delta /N(0)} e^{i t \sqrt{-\Delta}} u^{3}(t) dt + \int_{\delta/ N(0)}^{T_{2}} e^{i t \sqrt{-\Delta}} (1 - \chi)(\frac{x}{c |t|}) u^{3}(t) dt \bigg),  \\
 Q_M  \bigg(\int_{T_{1}}^{0} e^{i\tau \sqrt{-\Delta}} u^{3}(\tau) d \tau \bigg)  \bigg\rangle_{L^{2}},
\end{multline}
and
\begin{align}\label{4.27}
&\bigg\langle Q_M\bigg(\int_{0}^{T_{2}} e^{i t \sqrt{-\Delta}} u^{3}(t)\, dt\bigg), \\ &Q_M \bigg(\int_{T_{1}}^{-\delta/ N(0)} e^{i \tau \sqrt{-\Delta}} (1 - \chi)(\frac{\cdot}{c |\tau|}) u^{3}(\tau) d\tau + \int_{-\delta/ N(0)}^{0} e^{i \tau \sqrt{-\Delta}} u^{3}(\tau) \,d\tau\bigg)  \bigg\rangle_{L^{2}}. \notag
\end{align}
We provide the details for how to handle~\eqref{4.26} as the estimates for~\eqref{4.27} are similar. First recall that by the Duhamel principle~\eqref{4.9}, we can write 
\begin{equation}\label{4.28}
Q_M \int_{T_{1}}^{0} e^{i\tau \sqrt{-\Delta}} u^{3}(\tau) d\tau = \mp i \sqrt{-\Delta} Q_M v(0) \pm i \sqrt{-\Delta} e^{i T_{1} \sqrt{-\Delta}} Q_M v(T_{1}).
\end{equation}
Using  again~\eqref{4.12} and~\eqref{4.19} we have ,
\begin{align} \notag
&\abs{\left\langle  \sqrt{-\Delta} Q_M v(0), Q_M \left(\int_{0}^{\frac{\delta}{N(0)}} e^{i t \sqrt{-\Delta}} u^{3} dt + \int_{\frac{\delta}{N(0)}}^{T_{2}} e^{i t \sqrt{-\Delta}} (1 - \chi)(\frac{x}{c |t|}) u^{3} dt\right) \right\rangle_{L^{2}}} \\ & \quad \quad \quad \lesssim N(0)^{1/2} \| Q_Mv(0) \|_{\dot{H}^{1}(\R^{3})}.\label{4.29}
\end{align}
We remark that all of the estimates established so far have been uniform in $T_- < T_{1} < 0 < T_{2} < T_+$. This is important as we will now take limits, $T_1 \searrow T_-$ and $T_2 \nearrow T_+$. Indeed, using the weak convergence result in Lemma~\ref{lem weak} we claim that for any fixed $T_2 \in (0, T_+)$ we have 
\begin{multline}\label{4.30}
 \lim_{T_{1} \searrow T_-} \bigg\langle i \sqrt{-\Delta} e^{i T_{1} \sqrt{-\Delta}} Q_M v(T_{1}), \\ Q_M\bigg(\int_{0}^{\delta/ N(0)} e^{i t \sqrt{-\Delta}} u^{3} dt + \int_{\delta/ N(0)}^{T_{2}} e^{i t \sqrt{-\Delta}} (1 - \chi)(\frac{x}{c |t|}) u^{3} dt\bigg) \bigg\rangle_{L^{2}} = 0.
\end{multline}
In fact,  we note that~\eqref{4.12} and~\eqref{4.19} imply that letting $T_{2} \nearrow T_+$, for $M$ fixed,
\ant{
(-\Delta)^{1/4} Q_M \bigg(\int_{0}^{\delta/ N(0)} e^{i t \sqrt{-\Delta}} u^{3} dt + \int_{\delta/ N(0)}^{T_{2}} e^{i t \sqrt{-\Delta}} (1 - \chi)(\frac{x}{c |t|}) u^{3} dt\bigg)
}
converges in $L^{2}(\R^3)$ to
\ant{
(-\Delta)^{1/4} Q_M \bigg(\int_{0}^{\delta /N(0)} e^{i t \sqrt{-\Delta}} u^{3} dt + \int_{\delta /N(0)}^{T_+} e^{i t \sqrt{-\Delta}} (1 - \chi)(\frac{x}{c |t|}) u^{3} dt\bigg) \in L^{2}(\R^{3}).
}
Therefore, since Lemma~\ref{lem weak} says that $e^{iT_{1} \sqrt{-\Delta}} v(T_{1}) \rightharpoonup 0$ weakly in $\dot{H}^{1/2}(\R^{3})$ as $T_{1} \searrow T_-$, we then have 
\begin{multline}\label{4.33}
\lim_{T_{1} \searrow T_-} \lim_{T_{2} \nearrow T_+} \bigg\langle \sqrt{-\Delta} e^{iT_{1} \sqrt{-\Delta} }Q_Mv(T_{1}), \\ Q_M \bigg(\int_{0}^{\delta /N(0)} e^{i t \sqrt{-\Delta}} u^{3} dt+ \int_{\delta /N(0)}^{T_2} e^{i t \sqrt{-\Delta}} (1 - \chi)(\frac{x}{c |t|}) u^{3} dt\bigg) \bigg\rangle_{L^{2}} = 0.
\end{multline}
Thus we have proved that 
\EQ{ \label{A A' B'}
\abs{\lim_{T_1 \searrow T_-} \lim_{T_2\nearrow T_+} \ang{A, A'+B'}} \lesssim  N(0)^{\frac{1}{2}} \|Q_M v(0)\|_{\dot H^1}.
}
Using an identical argument, we can similarly prove that 
\EQ{\label{A B A'}
\abs{\lim_{T_1 \searrow T_-} \lim_{T_2\nearrow T_+} \ang{A+B, A'}} \lesssim  N(0)^{\frac{1}{2}} \|Q_M v(0)\|_{\dot H^1}.
}
where again here, $A, B, A', B'$ are defined as in~\eqref{A B def 1}. 
%
%
%
%
Therefore, combining~\eqref{4.20}, \eqref{4.29},~\eqref{A A' B'}, and~\eqref{A B A'}, we have proved that 
\begin{equation}\label{4.36}
\aligned
\abs{\lim_{T_{1} \searrow T_-} \lim_{T_{2} \nearrow T_+} \int_{0}^{T_{2}} \int_{T_{1}}^{0} \langle e^{it \sqrt{-\Delta}} Q_M (u^{3}), e^{i \tau \sqrt{-\Delta}} Q_M (u^{3}) \rangle_{L^{2}} dt d\tau } \\ \lesssim \| v(0) \|_{\dot{H}^{1}(\R^{3})} N(0)^{1/2} + N(0).
\endaligned
\end{equation}

We are left to examine the terms in~\eqref{4.10} (once expanded) that contain at most one Duhamel integral. Here we will rely heavily on the $\dot{H}^\frac{1}{2}$-weak convergence in Lemma~\ref{lem weak}. 

Indeed for a fixed $T_{1}$ and fixed $M$, we see that $\sqrt{-\Delta} Q_M v(T_{1}) \in \dot{H}^{1/2}(\R^{3})$. Therefore by Lemma~\ref{lem weak} we have 
\begin{equation}\label{4.37}
\lim_{T_{2} \nearrow T_+} \left\langle e^{iT_{1} \sqrt{-\Delta}} Q_M v(T_{1}), \, e^{i T_{2} \sqrt{-\Delta}} Q_M v(T_{2}) \right\rangle_{\dot{H}^{1}} = 0.
\end{equation}

Next, for  fixed $T_{1} > T_-$, Corollary~\ref{Cor: strich} and the bound $(\ref{4.6})$ imply that $$\| u^{3} \|_{L_{t}^{1} L_{x}^{2}([T_{1}, 0] \times \R^{3})} < \infty,$$ which in turn implies that $\int_{T_{1}}^{0} Q_M e^{it \sqrt{-\Delta}} u^{3} dt \in \dot{H}^{1/2}(\R^{3})$, here again we are using that the multiplier $\widehat{\psi_M} \in  \cS(\R^3)$. Therefore, Lemma~\ref{lem weak} implies
\begin{equation}\label{4.38}
\lim_{T_{2} \nearrow T_+} \left\langle Q_M \left(\int_{T_{1}}^{0} e^{it \sqrt{-\Delta}} u^{3} dt\right), e^{i T_{2} \sqrt{-\Delta}} Q_{M} v(T_{2}) \right\rangle_{\dot{H}^{1/2}} = 0.
\end{equation}

Finally,  we claim that 
\begin{equation}\label{4.39}
\lim_{T_{1} \searrow T_-} \lim_{T_{2} \nearrow T_+} \left\langle Q_M \left(e^{i T_{1} \sqrt{-\Delta}} v(T_{1})\right), \, Q_M \bigg(\int_{0}^{T_{2}} e^{i \tau \sqrt{-\Delta}} u^{3} d\tau\bigg) \right\rangle_{\dot{H}^{1/2}} = 0
\end{equation}
To see this we use $(\ref{4.28})$. Indeed, using again Lemma~\ref{lem weak} we have
\begin{align} \notag
&\lim_{T_{1} \searrow T_-} \lim_{T_{2} \nearrow T_+} \left\langle Q_M\left(e^{i T_{1} \sqrt{-\Delta}}  v(T_{1})\right), \sqrt{-\Delta} Q_M\left(v(0) - e^{i T_{2} \sqrt{-\Delta}}  v(T_{2})\right) \right\rangle_{\dot{H}^{1/2}}\\
&= \lim_{T_{1} \searrow T_-} \left\langle Q_M \left(e^{i T_{1} \sqrt{-\Delta}} v(T_{1})\right), Q_M \left( \sqrt{-\Delta} v(0) \right)\right \rangle_{\dot{H}^{1/2}} = 0.\label{4.41}
\end{align}
 Therefore, \eqref{4.10} together with~\eqref{4.36}--\eqref{4.41} imply that
\begin{equation}\label{4.42}
 \| Q_Mv(0) \|_{\dot{H}^{1}(\R^{3})}^{2} \lesssim \| Q_M v(0) \|_{\dot{H}^{1}(\R^{3})} N(0)^{1/2} + N(0).
\end{equation}
for all $M\ge M_0$ and with a uniform-in-$M$ constant. We can then conclude that 
\EQ{
 \| Q_Mv(0) \|_{\dot{H}^{1}(\R^{3})} \lesssim N(0)^{\frac{1}{2}}
 } 
uniformly in $M \ge M_0$.  Therefore, $\| v(0) \|_{\dot{H}^{1}(\R^{3})} \lesssim N(0)^{1/2}$. This proves Theorem $\ref{t4.1}$, assuming the conclusions of Proposition~\ref{h23}. 
\end{proof}

\subsection{Proof of Proposition~\ref{h23}}

To complete the proof of Theorem~\ref{t4.1} we prove Proposition~\ref{h23}. We begin with another refined Strichartz-type estimate.




\begin{lem}\label{l4.4}
 Let $\eta>0$. There exists $ \de = \delta(\eta) > 0$ such that for all $t_0 \in I$ we have
\begin{equation}\label{4.45}
\| u \|_{L_{t,x}^{4}([t_{0} - \delta/ N(t_{0}), t_{0} + \delta /N(t_{0})] \times \R^{3})} \lesssim \eta.
\end{equation}
\end{lem}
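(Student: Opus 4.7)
The plan is to exploit the scaling invariance of the $L^4_{t,x}$ norm together with the compactness of $K$ in $\dot H^{1/2} \times \dot H^{-1/2}$, then close a bootstrap via Strichartz. First I would use the scaling $\vec u \mapsto \vec u_\lambda$ with $\lambda = 1/N(t_0)$, which preserves both the $L^4_{t,x}$ norm and the compactness property (with rescaled profile $N^\lambda(t) = N(\lambda t)/\lambda$). This reduces matters to the case $N(t_0) = 1$ and $J = [t_0-\delta, t_0+\delta]$, with $\vec u(t_0)$ belonging to the compact set $K$ up to modulation.

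Let $A := \sup_{t \in I}\|\vec u(t)\|_{\dot H^{1/2}\times\dot H^{-1/2}}$. By the compactness of $K$ in $\dot H^{1/2}\times\dot H^{-1/2}$ (cf.\ Remark~\ref{Ceta}), given any small $\eta_1 > 0$ there exists $R = R(\eta_1) < \infty$, independent of $t_0$, such that after rescaling
\[
\|P_{> R}\vec u(t_0)\|_{\dot H^{1/2}\times\dot H^{-1/2}} + \|P_{< R^{-1}}\vec u(t_0)\|_{\dot H^{1/2}\times\dot H^{-1/2}} < \eta_1.
\]
Let $u_L(t) := S(t-t_0)\vec u(t_0)$ denote the linear evolution. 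Split
\[
u_L = P_{\le R^{-1}} u_L + P_{(R^{-1},R]} u_L + P_{> R} u_L.
\]
Applying Strichartz (Proposition~\ref{prop strich}) with the scale-invariant pair $(4,4)$, the two tail pieces satisfy
\[
\|P_{\le R^{-1}} u_L\|_{L^4_{t,x}(\R\times\R^3)} + \|P_{> R} u_L\|_{L^4_{t,x}(\R\times\R^3)} \lesssim \eta_1.
\]
For the middle piece, I would use H\"older in time, the Sobolev embedding $\dot H^{3/4}(\R^3)\hookrightarrow L^4(\R^3)$, and Bernstein's inequality on the frequency annulus $[R^{-1},R]$ to gain a factor of $\delta^{1/4}$:
\[
\|P_{(R^{-1},R]} u_L\|_{L^4_{t,x}(J\times\R^3)} \le (2\delta)^{1/4}\|P_{(R^{-1},R]} u_L\|_{L^\infty_t L^4_x(J)} \lesssim \delta^{1/4}\,R^{1/4}\,A.
\]
Choosing first $\eta_1 = \eta/C$ for a large absolute constant $C$, which fixes $R = R(\eta_1)$, and then $\delta = \delta(\eta,R,A)$ small enough, we obtain $\|u_L\|_{L^4_{t,x}(J\times\R^3)} \le \eta/(2C_{\mathrm{str}})$ where $C_{\mathrm{str}}$ is the Strichartz constant.

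Finally, to pass from the linear bound to the nonlinear one, I would apply Strichartz to $u = u_L + \text{Duhamel}$:
\[
\|u\|_{L^4_{t,x}(J)} \le \|u_L\|_{L^4_{t,x}(J)} + C_{\mathrm{str}}\|u^3\|_{L^{4/3}_{t,x}(J)} \le \tfrac{\eta}{2C_{\mathrm{str}}} + C_{\mathrm{str}}\|u\|_{L^4_{t,x}(J)}^3,
\]
and run a standard continuity argument starting from $t_0$ (where both sides vanish in the limiting interval). For $\eta$ small this bootstrap closes and gives $\|u\|_{L^4_{t,x}(J)} \lesssim \eta$, as claimed; for general $\eta$ one iterates on finitely many subintervals, each of length comparable to $\delta/N(t_0)$. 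The main point requiring care is that the choice of $\delta$ must be uniform in $t_0 \in I$, which is guaranteed precisely by the compactness of $K$ (so $R(\eta_1)$ is independent of $t_0$) and by the scale invariance of the whole argument; no additional smallness of $N(t_0)$ or $A$ is needed beyond the compactness property itself.
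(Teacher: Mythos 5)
Your proposal is correct and takes essentially the paper's route: split the free evolution in frequency using the uniform tail bounds supplied by compactness, gain $\delta^{1/4}$ on the bounded-frequency piece via Bernstein/Sobolev and the shortness of the time interval, bound the Duhamel term by Strichartz as $\|u\|_{L^4_{t,x}}^3$, and close with a continuity argument (large $\eta$ being trivial). The only cosmetic deviations are the preliminary rescaling to $N(t_0)=1$, where your formula for the rescaled frequency scale has a small slip (with $u_\lambda(t,x)=\lambda u(\lambda t,\lambda x)$ it should be $N_\lambda(t)=\lambda N(\lambda t)$, $\lambda=1/N(t_0)$), and the separate low-frequency piece, whose smallness does follow from precompactness but is not needed, since the paper handles all of $P_{\le C(\eta)N(t_0)}$ at once with Bernstein.
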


\begin{proof} Again, without loss of generality suppose that $t_{0} = 0$. Then define the interval $J = [- \de/ N(0), \de /N(0)]$. Using the Duhamel formula  we have
\EQ{\label{4.46}
\| u \|_{L_{t,x}^{4}(J \times \R^{3})}  \le  \|S(t) \vec u(0) \|_{L_{t,x}^{4}(J \times \R^{3})} + \left\|  \int_0^t S(t-s) \, (0, \pm u^3) \, ds \right\|_{L_{t,x}^{4}(J \times \R^{3})} \\
}
We estimate the first term on the right-hand-side of~\eqref{4.46} as follows: First choose $C(\eta)$ as in Remark~\ref{Ceta}, \eqref{3.3}, so that 
\begin{equation}\label{4.47}
\| P_{\geq C(\eta) N(0)} \vec u(0) \|_{\dot{H}^{1/2} \times \dot{H}^{-1/2}(\R^{3})}  \le \eta.
\end{equation}
Note that by compactness $C(\eta)$ above can be chosen uniformly in $t \in I$ which is why it suffices to only consider $t_0=0$ in this argument. Next, we have
\begin{multline}
\|S(t) \vec u(0) \|_{L_{t,x}^{4}(J \times \R^{3})} \lesssim \\ \|S(t) P_{\ge C(\eta) N(0)}\vec u(0) \|_{L_{t,x}^{4}(J \times \R^{3})} + \|S(t) P_{\le C(\eta) N(0)}\vec u(0) \|_{L_{t,x}^{4}(J \times \R^{3})}
\end{multline}
We use~\eqref{4.47} together with Strichartz estimates to handle the first term on the right-hand-side above: 
\EQ{
\|S(t) P_{\ge C(\eta) N(0)}\vec u(0) \|_{L_{t,x}^{4}(J \times \R^{3})} \lesssim \| P_{\geq C(\eta) N(0)} \vec u(0) \|_{\dot{H}^{1/2} \times \dot{H}^{-1/2}(\R^{3})}  \lesssim \eta.
}
 To control the second term we use Bernstein's inequalities,~\eqref{bern} and Sobolev embedding,
\begin{equation}\label{4.48}
\| P_{\leq C(\eta) N(0)} S(t)\vec u(0) \|_{L_{x}^{4}(\R^{3})} \lesssim C(\eta)^{1/4} N(0)^{1/4} \| u(0) \|_{\dot{H}^{1/2}(\R^{3})}.
\end{equation}
Taking the $L^4_t(J)$ norm of both sides above gives 
\EQ{
\|S(t) P_{\le C(\eta) N(0)}\vec u(0) \|_{L_{t,x}^{4}(J \times \R^{3})} \lesssim C(\eta)^{\frac{1}{4}} \de^{\frac{1}{4}} 
}
Next, we use Strichartz estimates on the second term on the right-hand-side of~\eqref{4.46}. 
\EQ{
 \left\|  \int_0^t S(t-s) \, (0, \pm u^3) \, ds \right\|_{L_{t,x}^{4}(J \times \R^{3})} \lesssim \| u^3\|_{L^{\frac{4}{3}}_{t,x}(J \times \R^3)} \lesssim \| u\|^3_{L^{4}_{t,x}(J \times \R^3)}
 }
Combining all of the above we obtain, 
\begin{equation}\label{4.49}
\| u \|_{L_{t,x}^{4}(J \times \R^{3})} \lesssim \eta + C(\eta)^{\frac{1}{4}}\de^{\frac{1}{4}} + \| u\|^3_{L^{4}_{t,x}(J \times \R^3)}
\end{equation}
The proof is concluded using the usual continuity argument after taking $\de$ small enough. 
\end{proof}

We can now prove Proposition~\ref{h23}. 
\begin{proof}[Proof of Proposition~\ref{h23}]  We can again, without loss of generality, just consider the case $t_0 = 0$.  We will prove Proposition~\ref{h23} by finding  a frequency envelope $\al_k(0)$  so that 
\EQ{ \label{al k}
&\|(P_k u(0), P_k u_t(0))\|_{\dot H^{\frac{2}{3}} \times \dot H^{-\frac{1}{3}}} \lesssim  2^{\frac{k}{6}} \al_k(0)\\
& \|\{ 2^{\frac{k}{6}} \al_k(0)\}_{k \in \Z}\|_{\ell^2} \lesssim N(0)^{\frac{1}{6}}
}
Once we find $\al_k(0)$ satisfying~\eqref{al k}, Proposition~\ref{h23} follows from Definition~\ref{freq en}. With this in mind we first establish the following claim: 
\begin{claim}\label{a al claim} There exists a number $\eta_0>0$ so that the following holds. Let $0<\eta< \eta_0$  and 
let $J: =[-\de/N(0), \de/N(0)],$ where $\de = \de(\eta)>0$ is chosen as in Lemma~\ref{l4.4}. Define
\EQ{\label{4.44}
&a_{k} := 2^{k/2} \| P_{k} u \|_{L_{t}^{\infty} L_{x}^{2}(J)} + 2^{-k/2} \| P_{k} u_{t} \|_{L^{\infty}_t L_{x}^{2}(J)}+ 2^{k/4} \| P_{k} u \|_{L_{t}^{8} L_{x}^{8/3}(J)}, \\
&a_{k}(0) := 2^{k/2} \| P_{k} u(0) \|_{L_{x}^{2}(\R^{3})} + 2^{-k/2} \| P_{k} u_{t}(0) \|_{L_{x}^{2}(\R^{3})}.
}
Next define frequency envelopes $\al_k$ and $\al_k(0)$ by 
\begin{equation}\label{4.54}
\alpha_{k} := \sum_{j} 2^{-\frac{1}{8} |j - k|} a_{j}, \hspace{5mm} \alpha_{k}(0) := \sum_{j} 2^{-\frac{1}{8} |j - k|} a_{j}(0).
\end{equation}
Then, as long as $\eta_0$ is chosen small enough we have  
\EQ{\label{4.53}
a_{k} \lesssim a_{k}(0) + \eta^{2} \sum_{j \geq k - 3} 2^{(k - j)/4} a_{j}.
}
and 
\EQ{\label{4.59}
\alpha_{k} \lesssim \alpha_{k}(0).
}

\end{claim}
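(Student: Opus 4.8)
\textbf{Proof proposal for Claim~\ref{a al claim}.}

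The plan is to run a Littlewood--Paley analysis of the Duhamel formula on the short interval $J = [-\de/N(0), \de/N(0)]$ and close a frequency-envelope bootstrap. First I would apply $P_k$ to the equation~\eqref{u eq} and use Strichartz estimates (Proposition~\ref{prop strich}) on $J$ with the data placed at $t=0$: schematically, for the norms appearing in $a_k$, one gets
\EQ{
a_k \lesssim a_k(0) + 2^{k/2}\,\|P_k(u^3)\|_{L^1_t L^2_x(J)} + (\text{lower-order Strichartz terms}),
}
where the exponents in the definition~\eqref{4.44} of $a_k$ (the pairs $(\infty,2)$ and $(8,8/3)$ at regularity $\dot H^{1/2}$) are chosen precisely so that the homogeneous term reproduces $a_k(0)$ and the Duhamel term can be estimated by the cubic nonlinearity in a dual Strichartz norm. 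The main point is then to control $\|P_k(u^3)\|_{L^1_t L^2_x(J)}$.

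Next I would estimate the nonlinear term by a paraproduct decomposition: write $u = \sum_j P_j u$ and split $u^3$ into the piece where the highest frequency is $\gtrsim 2^k$ (``high-high-low'' and ``high-low-low'' interactions feeding into output frequency $2^k$) and the rest. For the resonant/high-output part one uses H\"older in the form $\|P_k(u^3)\|_{L^1_t L^2_x} \lesssim \|u\|_{L^4_{t,x}(J)}^2 \, \|P_{\gtrsim k} u\|_{L^2_t L^\infty_x(J)}$ (or a similar splitting into one $L^\infty_x$-type factor controlled by Bernstein from the $L^8_t L^{8/3}_x$ norm and two $L^4_{t,x}$ factors). Crucially, by Lemma~\ref{l4.4}, $\|u\|_{L^4_{t,x}(J)} \lesssim \eta$, so each such term carries a factor $\eta^2$; converting the high-frequency factor back to the $a_j$'s via Bernstein produces exactly the tail sum $\eta^2 \sum_{j \ge k-3} 2^{(k-j)/4} a_j$ in~\eqref{4.53}. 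The low-output/low-frequency interactions (all three inputs at frequency $\ll 2^k$) are handled similarly but are better behaved — Bernstein gains a positive power of $2^{k-j}$ — and get absorbed into the same tail sum after adjusting constants; here one also uses that $N(0)\le 1$ and the uniform $\dot H^{1/2}\times\dot H^{-1/2}$ bound to keep constants uniform in $t_0$.

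Finally, to pass from~\eqref{4.53} to~\eqref{4.59}, I would convolve~\eqref{4.53} against the geometric weight $2^{-|j-k|/8}$ defining $\al_k$ in~\eqref{4.54}. The first term gives $\al_k(0)$. For the second, one interchanges the two sums and estimates
\EQ{
\sum_k 2^{-|m-k|/8}\,\eta^2 \sum_{j\ge k-3} 2^{(k-j)/4} a_j \;\lesssim\; \eta^2 \sum_j \Big(\sum_k 2^{-|m-k|/8}\,2^{(k-j)/4}\,\mathbf 1_{\{j\ge k-3\}}\Big) a_j \;\lesssim\; \eta^2 \,\al_m,
}
the inner sum over $k$ being a convergent geometric series bounded by $C\,2^{-|m-j|/8}$ uniformly (since $1/4 > 1/8$, the $k\le j$ tail converges, and the geometric weight handles $k\ge j$). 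Thus $\al_m \lesssim \al_m(0) + C\eta^2 \al_m$, and choosing $\eta_0$ small enough that $C\eta_0^2 < 1/2$ lets one absorb the last term, yielding~\eqref{4.59}. One subtlety to check is finiteness of $\al_k$ a priori — this is where the qualitative bound $\|u\|_{L^4_{t,x}(J)}<\infty$ (hence each $a_j<\infty$, with at worst $\ell^2$-in-$j$ control from $\vec u(0)\in\dot H^{1/2}\times\dot H^{-1/2}$) is needed so that the bootstrap inequality can legitimately be rearranged; the local constancy of $\al$ follows automatically from~\eqref{4.54}.

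I expect the main obstacle to be the bookkeeping in the paraproduct estimate for $\|P_k(u^3)\|_{L^1_tL^2_x(J)}$: one must distribute the three factors among $L^4_{t,x}$ and an $L^2_tL^\infty_x$-type norm (the latter reconstructed from $a_j$ via Bernstein in a way that produces the exponent $2^{(k-j)/4}$), and verify that every interaction type — high-high, high-low, low-low — feeds into the single geometric tail sum with summable constants, all uniformly in $t_0\in I$.
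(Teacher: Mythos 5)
Your overall strategy coincides with the paper's: frequency-localized Strichartz on $J$, the vanishing $P_k\big((P_{\le k-4}u)^3\big)=0$, the $L^4_{t,x}(J)$ smallness from Lemma~\ref{l4.4}, and then summation of \eqref{4.53} against the envelope weight with absorption of $\eta^2\alpha_k$ — your treatment of \eqref{4.59}, including the a priori finiteness point, is exactly the paper's computation and is fine. The gap is in the key nonlinear estimate. You propose $\|P_k(u^3)\|_{L^1_tL^2_x(J)}\lesssim \|u\|_{L^4_{t,x}(J)}^2\,\|P_{\gtrsim k}u\|_{L^2_tL^\infty_x(J)}$ and then claim to convert the last factor back to the $a_j$'s "via Bernstein" so as to produce the weight $2^{(k-j)/4}$. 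But $\|P_ju\|_{L^2_tL^\infty_x(J)}$ is not among the norms in \eqref{4.44}, and it cannot be recovered from them with the claimed weight: Bernstein from $L^{8/3}_x$ to $L^\infty_x$ \emph{loses} a factor $2^{9j/8}$, and passing from the $L^8_t(J)$ norm in $a_j$ to $L^2_t(J)$ by H\"older costs $|J|^{3/8}\simeq(\delta/N(0))^{3/8}$, which is unbounded because $N(0)\le 1$ may be arbitrarily small. The resulting weight behaves like $(\delta/N(0))^{3/8}\,2^{-k/2+7j/8}$, which grows in $j$ far faster than the $2^{j/8}$ tolerance built into the envelope, so the tail sum cannot be compared with $\alpha_k$ and \eqref{4.53} does not follow. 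Your parenthetical variant (two $L^4_{t,x}$ factors and one $L^8_tL^\infty_x$ factor) lands the nonlinearity in $L^{8/5}_tL^2_x$, whose dual pair $(8/3,2)$ is not admissible, so the inhomogeneous estimate of Proposition~\ref{prop strich} is unavailable there, and the $2^{9j/8}$ Bernstein loss persists.

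The repair is exactly the exponent choice the paper makes: estimate the Duhamel contribution as $2^{k/4}\|P_k(u^3)\|_{L^{8/5}_tL^{8/7}_x(J)}$, i.e. in the dual of the admissible pair $(8/3,8)$, and use plain H\"older in space with $\frac78=\frac14+\frac14+\frac38$, giving $\|P_k(u^3)\|_{L^{8/7}_x}\lesssim\|u\|_{L^4_x}^2\sum_{j\ge k-3}\|P_ju\|_{L^{8/3}_x}$ as in \eqref{lp} (no Bernstein at all), followed by H\"older in time with $\frac58=\frac14+\frac14+\frac18$. Then the two $L^4_{t,x}(J)$ factors give $\eta^2$, the third factor is precisely the $L^8_tL^{8/3}_x$ component of $a_j$, and the weight $2^{(k-j)/4}$ arises simply as $2^{k/4}\cdot 2^{-j/4}$ from the definitions in \eqref{4.44} — uniformly in $t_0$ and with no interval-length factors. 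A minor additional point: the "low-low-low" interactions you propose to "absorb into the tail" need no estimate at all, since they vanish identically; that vanishing is the only reason the tail starts at $j\ge k-3$.
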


\begin{proof}[Proof of Claim~\ref{a al claim}]
 To prove~\eqref{4.53} we note that Strichartz estimates, together with Lemma $\ref{l4.4}$ imply that 
\EQ{\label{4.50}
a_k  & = 2^{k/2} \| P_{k} u \|_{L_{t}^{\infty} L_{x}^{2}(J)} + 2^{k/4} \| P_{k} u \|_{L_{t}^{8} L_{x}^{8/3}(J)} \lesssim\\
& \lesssim 2^{k/2} \| P_{k} u(0) \|_{L_{x}^{2}(\R^{3})}  + 2^{-k/2} \| P_{k} u_{t}(0) \|_{L_{x}^{2}(\R^{3})} +2^{\frac{k}{4}} \| P_{k}(u^{3}) \|_{L_{t}^{8/5} L_{x}^{8/7}(J)} \\
& \lesssim a_{k}(0) + \eta^{2} \sum_{j \geq k - 3} 2^{(k - j)/4} a_{j}.
}
To prove the last line above we note that will suffice, by H\"older's inequality in time and Lemma~\ref{l4.4},  to show that 
\EQ{ \label{lp}
  \| P_{k}(u^{3}) \|_{ L_{x}^{\frac{8}{7}}} \lesssim  \| u\|_{L^{4}}^2 \sum_{j \ge k-3} \| P_{j} u\|_{L^{\frac{8}{3}}}
}
First,  since $P_{k}((P_{\leq k - 4} u)^{3}) = 0$,  we have 
\ant{
\|P_k u^3\|_{L^{\frac{8}{7}}_x} &\lesssim  \| P_k [(P_{\le k-4} u)^2 P_{\ge k-3} u]\|_{L^{\frac{8}{7}}} +\| P_k [(P_{\le k-4} u (P_{\ge k-3} u)^2]\|_{L^{\frac{8}{7}}} \\
& \quad +\| P_k [ P_{\ge k-3} u]^3\|_{L^{\frac{8}{7}}} \\
& \lesssim  \|u\|_{L^4}^2 \| P_{\ge k-3} u\|_{L^{\frac{8}{3}}}
}
where the last inequality follows from the boundedness of $P_k$ on $L^p$ and by Holder's inequality. This proves~\eqref{lp}, and thus we have established ~\eqref{4.53}. 

To prove~\eqref{4.59} we use~\eqref{4.53} to obtain
\begin{equation}\label{4.55}
\sum_j 2^{-\frac{1}{8}|j - k|} a_{j} \lesssim \sum_j a_{j}(0) 2^{-\frac{1}{8} |j - k|} + \eta^{2} \sum_{j} 2^{-\frac{1}{8} |j - k|} \sum_{j_{1} \geq j - 3} 2^{(j - j_{1})/4} a_{j_{1}}.
\end{equation}
Reversing the order of summation in the second term above gives
\EQ{ \label{al from a}
&\sum_{j_{1} \leq k} \sum_{j \leq j_{1} + 3} 2^{(j - j_{1})/4} 2^{\frac{1}{8} (j - k)} a_{j_{1}} \lesssim \sum_{j_{1} \leq k} 2^{\frac{1}{8} (j_{1} - k)} a_{j_{1}} \lesssim \alpha_{k},\\
&\sum_{j_{1} > k} \sum_{j \leq j_{1} + 3} 2^{(j - j_{1})/4} 2^{-\frac{1}{8} |j - k|} a_{j_{1}} \lesssim \sum_{j_{1} > k} (2^{-\frac{1}{4} (k - j_{1})} + 2^{-\frac{1}{8} (k - j_{1})}) a_{j_{1}} \lesssim \alpha_{k}.
}
Therefore, $(\ref{4.55})$ implies that
\begin{equation}\label{4.58}
\alpha_{k} \lesssim \alpha_{k}(0) + \eta^{2} \alpha_{k},
\end{equation}
which in turn yields~\eqref{4.59} as long as $\eta>0$ is  small enough. 
\end{proof}

We now return to the proof of Proposition~\ref{h23}. We note that the calculation in the proof of Claim~\ref{a al claim} also allows us to deduce that 
\begin{equation}\label{4.60}
2^{k/2}\left\| P_{k} \int_{0}^{\delta/ N(0)} \frac{e^{-it \sqrt{-\Delta}}}{\sqrt{-\Delta}} u^3(t) dt \right\|_{L_{x}^{2}(\R^{3})} \lesssim \eta^{2} \sum_{j \geq k - 3} 2^{(k - j)/4} a_{j}.
\end{equation}





Next, we claim that for any $s_0 \in (0, \frac{1}{2}]$ we have the  estimate 
\begin{equation}\label{4.63}
\int_{\delta/ N(0)}^{\infty} \left\| \frac{e^{-it \sqrt{-\Delta}}}{\sqrt{-\Delta}} (1 - \chi)(\frac{x}{c|t|}) u^{3}\,  \right\|_{\dot{H}^{\frac{1}{2}+s_0}(\R^{3})} dt \lesssim N(0)^{s_0} \delta^{-s_0}.
\end{equation}
where here $c>0$ is a fixed small constant, ($c= \frac{1}{4}$ will do), and $\chi \in C^{\infty}_0(\R^3)$ is radial, $\chi(x) = 1 $ for all $\abs{x} \le 1$ and $\chi(x) = 0$ for all $\abs{x} \ge 2$. To prove~\eqref{4.63}, we note that by Sobolev embedding
\ant{
\left\| \frac{e^{-it \sqrt{-\Delta}}}{\sqrt{-\Delta}} (1 - \chi)(\frac{x}{c|t|}) u^{3} \right\|_{\dot{H}^{\frac{1}{2}+s_0}} &= \left\| (1- \chi)( \frac{x}{c \abs{t}}) u^3 \right\|_{\dot{H}^{-\frac{1}{2}+s_0}} \lesssim \left\| (1- \chi)( \frac{x}{c \abs{t}}) u^3 \right\|_{L^{p}}
}
where $\frac{1}{p} = \frac{2}{3}- \frac{s_0}{3}$. Then using the radial Sobolev embedding, i.e., Lemma~\ref{lem rad se}, we have 
\ant{
(c\abs{t})^{1+s_0}\left\| (1- \chi)( \frac{x}{c \abs{t}}) u^3 \right\|_{L^{p}} &\lesssim \left\| (1- \chi)^{\frac{1}{3}}( \frac{x}{c \abs{t}}) \abs{x}^{\frac{1+s_0}{3}}u \right\|_{L^{3p}}^3 \lesssim \|u\|_{\dot{H}^{\frac{1}{2}}}^3
}
Hence, 
\ant{
\left\| \frac{e^{-it \sqrt{-\Delta}}}{\sqrt{-\Delta}} (1 - \chi)(\frac{x}{c|t|}) u^{3} \right\|_{\dot{H}^{\frac{1}{2}+s_0}} & \lesssim \abs{t}^{-1-s_0} \| u\|_{L^{\infty}_t\dot{H}^{\frac{1}{2}}}
}
Integrating the above in time from $t= \de/N(0)$ to $t = + \infty$ then yields~\eqref{4.63}. 

Once again by the weak convergence result in Lemma~\ref{lem weak} we have,
\ant{
\langle P_{k} v(0), P_{k} v(0) \rangle_{\dot{H}^{1/2}} = \left\langle P_{k} v(0), P_{k}\left(\lim_{T_{2} \nearrow T_+} \pm \frac{i}{\sqrt{-\Delta}} \int_{0}^{T_{2}} e^{i \tau \sqrt{-\Delta}} u^{3}(\tau) d\tau\right) \right\rangle_{\dot{H}^{1/2}}
}
which for all $T_- < T_{1} < 0$, is equal to 
\begin{align}\label{4.65}
=& \lim_{T_{2} \nearrow T_+} \left\langle  P_{k}\big(e^{iT_{1} \sqrt{-\Delta}} v(T_{1})\big), \frac{\pm i}{\sqrt{-\Delta}} P_{k}\bigg(\int_{0}^{T_{2}} e^{i \tau \sqrt{-\Delta}} u^{3} d\tau\bigg) \right\rangle_{\dot{H}^{1/2}}
\\
&+ \lim_{T_{2} \nearrow T_+} \left\langle \frac{1}{\sqrt{-\Delta}} P_{k}\bigg(\int_{T_{1}}^{0} e^{it \sqrt{-\Delta}} u^{3} dt\bigg),\, \,  \frac{1}{\sqrt{-\Delta}} P_{k}\bigg(\int_{0}^{T_{2}} e^{i \tau \sqrt{-\Delta}} u^{3} d\tau\bigg) \right\rangle_{\dot{H}^{1/2}}.\notag
\end{align}
 As $T_{1} \searrow T_-$, we note that $\eqref{4.65} \rightarrow 0$. Indeed, by $(\ref{4.9})$,
\EQ{\label{4.67}
&\lim_{T_{1} \searrow T_-} \lim_{T_{2} \nearrow T_+} \left\langle  P_{k}\big(e^{iT_{1} \sqrt{-\Delta}} v(T_{1})\big), \frac{\pm i}{\sqrt{-\Delta}} P_{k}\bigg(\int_{0}^{T_{2}} e^{i \tau \sqrt{-\Delta}} u^{3} d\tau\bigg) \right\rangle_{\dot{H}^{1/2}} \\
\quad &= \lim_{T_{1} \searrow T_-} \lim_{T_{2} \nearrow T_+} \left\langle  P_{k}(e^{iT_{1} \sqrt{-\Delta}} v(T_{1})), P_{k}\big(v(0) - e^{i T_{2} \sqrt{-\Delta}} v(T_{2})\big) \right\rangle_{\dot{H}^{1/2}} \\
\quad &= \lim_{T_{1} \searrow T_-} \left\langle P_{k}(e^{iT_{1} \sqrt{-\Delta}} v(T_{1})), P_{k} (v(0)) \right\rangle_{\dot{H}^{1/2}} = 0.
}
Therefore,
\begin{multline*}
 \quad\left\langle P_{k} v(0), P_{k} v(0) \right\rangle_{\dot{H}^{1/2}} = \\ 
 =\lim_{T_{1} \searrow T_-} \lim_{T_{2} \nearrow T_+}\left\langle \frac{1}{\sqrt{-\Delta}} P_{k}\bigg(\int_{T_{1}}^{0} e^{it \sqrt{-\Delta}} u^{3} dt\bigg), \frac{1}{\sqrt{-\Delta}} P_{k}\bigg(\int_{0}^{T_{2}} e^{i \tau \sqrt{-\Delta}} u^{3} d\tau\bigg) \right\rangle_{\dot{H}^{1/2}}.
\end{multline*}
To estimate the right-hand-side above, we split each term into two pieces and use the identity 
\EQ{\label{ang AB}
\ang{A+B , \, A' +B'} = \ang{A+B , \, A' } + \ang{A, \, A' +B'} - \ang{A , \, A'} + \ang{B , \, B'},
} 
where here 
\ant{
&A := P_{k}\left(\int_{-\delta /N(0)}^{0} \frac{e^{it \sqrt{-\Delta}}}{\sqrt{-\Delta}} u^{3} dt\right) + P_{k}\left(\int_{T_{1}}^{-\delta/ N(0)} \frac{e^{it \sqrt{-\Delta}}}{\sqrt{-\Delta}} (1 - \chi)(\frac{x}{c |t|}) u^{3} dt\right),\\
&B:=P_{k}\left(\int_{T_{1}}^{-\delta/ N(0)} \frac{e^{it \sqrt{-\Delta}}}{\sqrt{-\Delta}} \chi(\frac{x}{c |t|}) u^{3} dt\right),
}
and $A', B'$ are the analogous quantities in the positive time direction. 

We begin by estimating the first two terms on the right-hand-side of~\eqref{ang AB}. In fact, an identical argument applies to both of these terms so we only provide details for the term $\ang{A+B, A'}$. To begin, we note that by~\eqref{4.63} we have 
\EQ{\label{2k6 bk}
 \left\| P_{k}\left(\int_{\frac{\delta}{N(0)}}^{T_+} \frac{e^{i\tau \sqrt{-\Delta}}}{\sqrt{-\Delta}} (1 - \chi)(\frac{x}{c |\tau|}) u^{3} d\tau \right)\right\|_{\dot{H}^{1/2}}  \lesssim 2^{-k/6}b_k
 }
 where $b_k= 2^{-k(s_0(k)-1/6)}N(0)^{s_0(k)}$ and we are free to choose any $s_0 = s_0(k) \in (0, \frac{1}{2}]$. Setting $s_0(k) = \frac{5}{24}$ if $2^k \ge N(0)$ and $s_0(k) = \frac{1}{8}$ if $2^k<N(0)$ we have 
 \EQ{\label{bk def}
 b_k:= \begin{cases} 2^{-k/24}N(0)^{\frac{5}{24}} \de^{-\frac{5}{24}} \mif  2^k \ge N(0)\\ 2^{k/24} N(0)^{\frac{1}{8}} \de^{-\frac{1}{8}} \mif 2^k<N(0) \end{cases}
 }
 Then 
 \EQ{ \label{bk l2}
 \|\{b_k\}\|_{\ell^2} \lesssim N(0)^{\frac{1}{6}}.
 }
Therefore with $b_k$ as in~\eqref{bk def} we can combine~\eqref{4.60} and \eqref{2k6 bk} to deduce that 
\ant{
&\left\| P_{k} \left(\int_{0}^{\frac{\delta}{ N(0)}} \frac{e^{i\tau \sqrt{-\Delta}}}{\sqrt{-\Delta}} u^{3} dt\right) \right\|_{\dot{H}^{1/2}} + \left\| P_{k}\left(\int_{\frac{\delta}{N(0)}}^{T_+} \frac{e^{i\tau \sqrt{-\Delta}}}{\sqrt{-\Delta}} (1 - \chi)(\frac{x}{c |\tau|}) u^{3} d\tau \right)\right\|_{\dot{H}^{1/2}} \\ 
& \quad\lesssim \eta^{2} \sum_{j \geq k - 3}2^{(k-j)/4} a_{j} + 2^{-k/6} b_{k}.
}
 Then since
\begin{equation}\label{4.70}
\frac{\pm i}{\sqrt{-\Delta}} \int_{T_{1}}^{0} e^{it \sqrt{-\Delta}} u^{3} dt \rightharpoonup v(0) \quad \textrm{in} \, \,  \dot{H}^{\frac{1}{2}} \mas T_1 \searrow T_-
\end{equation}
we can deduce  the estimate
\begin{align} \notag
&\lim_{T_{1} \searrow T_-} \lim_{T_{2} \nearrow T_+}\ang{A+B ,\,  A'}_{\dot{H}^{\frac{1}{2}}} = \lim_{T_{1} \searrow T_-} \lim_{T_{2} \nearrow T_+} \Bigg\langle P_{k} \bigg(\int_{T_{1}}^{0} \frac{e^{it \sqrt{-\Delta}}}{\sqrt{-\Delta}} u^{3} dt\bigg), \\  \notag
&P_{k}\bigg(\int_{0}^{\delta/ N(0)} \frac{e^{-i\tau \sqrt{-\Delta}}}{\sqrt{-\Delta}} u^{3} d\tau\bigg) + P_k \bigg(\int_{\delta /N(0)}^{T_{2}} \frac{e^{i\tau \sqrt{-\Delta}}}{\sqrt{-\Delta}} (1 - \chi)(\frac{x}{c|\tau|}) u^{3} d\tau \bigg) \Bigg\rangle_{\dot{H}^{1/2}}\\
&\lesssim a_{k}(0) \left(\eta^{2} \sum_{j \geq k - 3} 2^{(k - j)/4} a_{j} + 
2^{-k/6} b_{k}\right).\label{4.72}
\end{align}
An identical calculation in the other time direction gives the same estimate for $\ang{A, A'+B'}$. Next, we estimate $\ang{A, A'}$ again using~\eqref{4.60} and~\eqref{2k6 bk}. We have 
\begin{align} \notag
&\Bigg\langle P_{k}\bigg(\int_{0}^{\delta /N(0)} \frac{e^{i\tau \sqrt{-\Delta}}}{\sqrt{-\Delta}} u^{3} d\tau\bigg) + P_{k}\bigg(\int_{\delta/ N(0)}^{T_{2}} \frac{e^{i\tau \sqrt{-\Delta}}}{\sqrt{-\Delta}} (1 - \chi)(\frac{x}{c |\tau|}) u^{3} d\tau\bigg), \\  \notag
&P_{k}\bigg(\int_{-\delta /N(0)}^{0} \frac{e^{it \sqrt{-\Delta}}}{\sqrt{-\Delta}} u^{3} dt\bigg) + P_{k}\bigg(\int_{T_{1}}^{-\delta/ N(0)} \frac{e^{it \sqrt{-\Delta}}}{\sqrt{-\Delta}} (1 - \chi)(\frac{x}{c |t|}) u^{3} dt\bigg) \Bigg\rangle_{\dot{H}^{1/2}}\\
& \lesssim \left(\eta^{2} \sum_{j \geq k - 3} 2^{(k - j)/4} a_{j}\right)^{2} +  2^{-k/3} b_{k}^{2}.\label{4.74}
\end{align}
 Finally, it remains to estimate $\ang{B, B'}$ which is given by
\begin{align} \notag
&\int_{\frac{\delta}{N(0)}}^{T_{2}} \int_{T_{1}}^{\frac{-\delta}{N(0)}} \left\langle P_{k}\bigg(\frac{e^{it \sqrt{-\Delta}}}{\sqrt{-\Delta}} \chi(\frac{x}{c |t|}) u^{3}(t)\bigg), P_{k}\bigg(\frac{e^{i\tau \sqrt{-\Delta}}}{\sqrt{-\Delta}} \chi(\frac{x}{c |\tau|}) u^{3}(\tau)\bigg) \right\rangle_{\dot{H}^{1/2}} dt d\tau\\
\label{4.76}
&=\int_{\frac{\delta}{ N(0)}}^{T_{2}} \int_{T_{1}}^{\frac{-\delta}{N(0)}}\left\langle  \chi(\frac{ x}{c |t|}) u^{3}(t),\,  P_{k}^2\bigg(\frac{e^{i(\tau - t) \sqrt{-\Delta}}}{\sqrt{-\Delta}} \chi(\frac{ \cdot}{c |\tau|}) u^{3}(\tau) \bigg)(x) \right\rangle_{L^2} dt d\tau
\end{align}
 Here we perform an argument similar to our use of the sharp Huygens principle in the proof of~\eqref{4.23}. The kernel of $P_{k}^{2} e^{i(t - \tau) \sqrt{-\Delta}} (\sqrt{- \Delta})^{-1}$ is given by
\begin{equation}\label{4.76.1}
K_k(x) =K_k(\abs{x})= c\int_{0}^{2\pi} \int e^{i|x| \rho \cos \theta} e^{i( \tau-t) \rho}  \rho^{-1}\phi(\frac{\rho}{2^k}) \rho^{2}  d\rho \sin \te d\theta.
\end{equation}
where the integrand is written in polar coordinates on $\R^3$, where $\rho =  \abs{\xi}$.  The function $\phi( \cdot/2^k)$ above is the Fourier multiplier for the Littlewood-Paley projection,~$P_k$, and its support is contained in   $ \rho \in [2^{k-1}, 2^{k+1}]$.  Integration by parts  $L \in \N$ times in $\rho$ gives the estimates 
\EQ{
\abs{K_k(x-y)} \lesssim_L \frac{2^{2k}}{ \ang{2^k \abs{ (\tau-t) - \abs{x-y}}}^L}.
}
 In~\eqref{4.76} we have $|x| \leq \frac{1}{4} |t|$, $|y| \leq \frac{1}{4} |\tau|$, and therefore $|x - y| \leq \frac{1}{4} | \tau-t|$. Thus we have  $$( \tau- t)  - |x-y| \geq \frac{1}{2} |\tau-t|$$ and hence
  \begin{equation}\label{4.76.2}
\abs{K_k(x-y)} \lesssim_{L} \frac{2^{2k}}{\langle 2^{k} | \tau-t| \rangle^{L}}.
\end{equation}
 If $2^{k} \gg N(0)$, we use~\eqref{4.76.2} with $L=5$ to obtain
\EQ{\label{4.77}
&\int_{\frac{\delta}{ N(0)}}^{T_{2}} \int_{T_{1}}^{\frac{-\delta}{N(0)}}\left\langle  \chi(\frac{ x}{c |t|}) u^{3}(t),\,  P_{k}^2\bigg(\frac{e^{i(\tau - t) \sqrt{-\Delta}}}{\sqrt{-\Delta}} \chi(\frac{ \cdot}{c |\tau|}) u^{3}(\tau) \bigg)(x) \right\rangle_{L^2} dt d\tau \\
& \lesssim \| u\|_{L^{\infty}_tL^3_x}^6 2^{-3k} N(0)^{3} \lesssim  2^{-3k} N(0)^{3}  \lesssim 2^{-\frac{1}{2}k}N(0)^{\frac{1}{2}}
}
If $2^{k} \lesssim N(0)$,  we use the crude estimate $|K_{k}(x-y)| \lesssim 2^{2k}$ in the $(t, \tau)$ region $|t - \tau| \lesssim 2^{-k}$ and we use~\eqref{4.76.2} with $L=3$ in the region where $\abs{\tau-t} \ge 2^{-k}$. We can then conclude that if $2^{k} \lesssim N(0)$ we have
\EQ{\label{4.78}
\int_{\frac{\delta}{ N(0)}}^{T_{2}} \int_{T_{1}}^{\frac{-\delta}{N(0)}}\left\langle  \chi(\frac{ x}{c |t|}) u^{3}(t),\,  P_{k}^2\bigg(\frac{e^{i(\tau - t) \sqrt{-\Delta}}}{\sqrt{-\Delta}} \chi(\frac{ \cdot}{c |\tau|}) u^{3}(\tau) \bigg)(x) \right\rangle_{L^2} dt d\tau \lesssim 1.
}
Therefore,~\eqref{4.72},~\eqref{4.74},~\eqref{4.77}), and~\eqref{4.78} imply that
\EQ{
a_{k}^2(0) &\lesssim a_k(0) \left( \eta^2 \sum_{j \ge k-3}2^{(k-j)/4} a_j + 2^{-k/6} b_k \right) + \left( \eta^2 \sum_{j \ge k-3}2^{(k-j)/4} a_j \right)^2 \\
& \quad + 2^{-k/3}b_k^2 + \min(2^{-k/2}N(0)^{\frac{1}{2}}, 1)
}
Hence we have 
\EQ{
a_k(0) \lesssim \eta^2 \sum_{j \ge k-3}2^{(k-j)/4} a_j  + 2^{-k/6} b_k + \min(2^{-k/4}N(0)^{\frac{1}{4}}, 1)
}
Using the definitions of $\al_k(0)$, $\al_k$, and~\eqref{al from a} we get 
\ant{
\al_k(0) \lesssim  \eta^2 \al_k + \sum_{j} 2^{-\frac{1}{8} \abs{j-k}}2^{-\frac{1}{6}j} b_j + \sum_{j} 2^{-\frac{1}{8} \abs{j-k}}2^{-\frac{1}{6}j} \min(2^{-\frac{1}{12}j} N(0)^{\frac{1}{4}}, 2^{\frac{1}{6}j})
}
Using~\eqref{4.59} and choosing $\eta$ small enough we then have 
\EQ{
\al_k(0) \lesssim  \sum_{j} 2^{-\frac{1}{8} \abs{j-k}}2^{-\frac{1}{6}j} b_j  +   \sum_{j} 2^{-\frac{1}{8} \abs{j-k}}2^{-\frac{1}{6}j} c_j
}
where the $c_j:= \min(2^{-\frac{1}{12}j} N(0)^{\frac{1}{4}}, 2^{\frac{1}{6}j})$ satisfy  
\EQ{\label{cj def}
 \|\{c_j\}\|_{\ell^2} \lesssim N(0)^{\frac{1}{6}}
}
By Schur's test, using~\eqref{bk l2} and~\eqref{cj def}, we can finally conclude that 
\EQ{
\|2^{\frac{1}{6}k} \al_k(0)\|_{\ell^2} \lesssim N(0)^{\frac{1}{6}}
}
as desired. This finishes the proof since $\al_k(0)$ satisfies~\eqref{al k}. 
\end{proof}

\section{No energy cascade and even more regularity when $N(t) \equiv1$}  
In this section we begin by showing that an energy cascade, i.e., the case, $\limsup_{t \to \infty} N(t) = 0$, is impossible. This leaves us with the soliton-like critical element, $N(t)  = 1$ for all $t \ge0$. We then can reduce this situation to the case of a soliton-like critical element that is global in both time directions with $N(t) \equiv 1$ for all $t \in \R$. Finally, we show that such a solution is in fact uniformly bounded in $\dot{H}^2 \times \dot{H}^1$, which in turn means that $\vec u(t)$ satisfies the compactness property in $\dot{H}^1 \times L^2$. 
\subsection{No Energy Cascade}\label{cascade}
We can quickly rule out the case of a critical element $\vec u(t)$ with scale $N(t)$ satisfying $\limsup_{t \to \infty} N(t) = 0$. We prove the following consequence of Theorem~\ref{t4.1} and Proposition~\ref{neg blow up}. 
\begin{lem} Let $\vec u(t)$ be a solution to~\eqref{u eq} defined on a time interval $I = (T_-, + \infty)$ with $T_-<0$ and  and suppose that $\vec u(t)$ has the compactness property on $I$ with $N(t) \le 1$ for all $t \in [0, \infty)$. Then $\ds{\limsup_{t \to \infty} N(t)  = 0}$ is impossible unless $\vec u(t)~\equiv~0$. 
\end{lem}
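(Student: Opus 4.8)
The plan is to argue by contradiction: suppose $\vec u(t)\not\equiv 0$ has the compactness property on $I=(T_-,\infty)$ with $N(t)\le 1$ and $\limsup_{t\to\infty}N(t)=0$, and derive that $E(\vec u)=0$, which by Proposition~\ref{neg blow up} forces $\vec u\equiv 0$ (the energy is finite and well-defined by Theorem~\ref{t4.1}, which puts $\vec u(t)\in\dot H^1\times L^2$), a contradiction. The first step is to observe that Theorem~\ref{t4.1} is applicable and gives the quantitative bound $\|\vec u(t)\|_{\dot H^1\times L^2}\lesssim N(t)^{1/2}$ uniformly in $t\in I$. In particular, along a sequence $t_n\to\infty$ with $N(t_n)\to 0$ we get $\|\vec u(t_n)\|_{\dot H^1\times L^2}\to 0$.

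The key point is then to show that this forces the conserved energy to vanish. Since the energy is conserved (it is now a legitimate conserved quantity because $\vec u(t)$ lies in the energy space by Theorem~\ref{t4.1}), it suffices to show $E(\vec u)(t_n)\to 0$. Write
\EQ{
E(\vec u)(t_n)=\int_{\R^3}\Big[\tfrac12\big(|u_t(t_n)|^2+|\nabla u(t_n)|^2\big)\pm\tfrac14|u(t_n)|^4\Big]\,dx.
}
The quadratic part is exactly $\tfrac12\|\vec u(t_n)\|_{\dot H^1\times L^2}^2\lesssim N(t_n)\to 0$. For the quartic part, use the Sobolev embedding $\dot H^{3/4}(\R^3)\hookrightarrow L^4(\R^3)$ together with interpolation: $\|u(t_n)\|_{L^4}^4\lesssim \|u(t_n)\|_{\dot H^{3/4}}^4\lesssim \|u(t_n)\|_{\dot H^{1/2}}^2\|u(t_n)\|_{\dot H^1}^2$. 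The first factor is bounded by the (fixed) critical norm, and the second is $\lesssim N(t_n)\to 0$. Hence $E(\vec u)(t_n)\to 0$, so $E(\vec u)=0$ by conservation.

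The conclusion is immediate in the focusing case: Proposition~\ref{neg blow up} says any solution with $E(\vec u)\le 0$ on a maximal interval either is identically zero or blows up in finite time in both directions; since our $\vec u$ is global forward in time (indeed $T_+=+\infty$), it must be identically zero. In the defocusing case the energy is a sum of nonnegative terms, so $E(\vec u)=0$ forces $\nabla u\equiv 0$, $u_t\equiv 0$, and $u\equiv 0$ for every $t$, again giving $\vec u\equiv 0$. The main obstacle — and the reason this lemma is placed after Section~\ref{dd} — is precisely the need for the extra regularity from Theorem~\ref{t4.1}: without knowing $\vec u(t)\in\dot H^1\times L^2$ one cannot even define the energy, let alone use its conservation; once that bound is in hand, together with the scaling gain $N(t)^{1/2}$, the argument is a short interpolation plus an appeal to Proposition~\ref{neg blow up}.
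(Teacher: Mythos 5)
Your proposal is correct and follows essentially the same route as the paper: apply Theorem~\ref{t4.1} to get $\|\vec u(t)\|_{\dot H^1\times L^2}\lesssim N(t)^{1/2}$, control the quartic term via $\dot H^{3/4}\hookrightarrow L^4$ and interpolation between $\dot H^{1/2}$ and $\dot H^1$, conclude $E(\vec u)=0$ by conservation, and finish with positivity in the defocusing case and Proposition~\ref{neg blow up} in the focusing case. The only cosmetic difference is that you pass to a sequence $t_n$ with $N(t_n)\to 0$ while the paper phrases the same computation with limsups, and you spell out why the blow-up alternative of Proposition~\ref{neg blow up} is excluded (forward globality), which the paper leaves implicit.
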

\begin{proof} Since $\vec u(t)$ satisfies the conditions of Theorem~\ref{4.1} we see that 
\EQ{ \label{h1 0}
\limsup_{t \to \infty} \|\vec u(t)\|_{ \dot{H}^1 \times L^2}   \lesssim  \limsup_{t \to \infty} N(t)^{\frac{1}{2}} = 0
}
By Sobolev embedding and interpolation we also have 
\EQ{\label{l4 0}
\limsup_{t \to \infty}\| u(t) \|_{L^4} &\lesssim \limsup_{t \to \infty}\| u(t) \|_{ \dot{H}^{\frac{3}{4}}} \lesssim \limsup_{t \to \infty}\| u(t) \|_{ \dot{H}^{\frac{1}{2}}}^{\frac{1}{2}}\| u(t) \|_{ \dot{H}^1}^{\frac{1}{2}}\\& \lesssim \limsup_{t \to \infty}N(t)^{\frac{1}{4}} =0
}
Therefore the conserved energy $E(\vec u(t))$ is well-defined and~\eqref{h1 0} and~\eqref{l4 0} imply that we must have $E(\vec u(t))  = 0$. If $\vec u(t)$ solves the de-focusing equation then $E(\vec u(t))$ is given by~\eqref{def foc} and we can directly conclude that we must have $\vec u(t) \equiv 0$. If $\vec u(t)$ is a solution to the focusing equation then we use Proposition~\ref{neg blow up} to deduce that $\vec u(t) \equiv 0$. 
\end{proof}

\subsection{Additional regularity for a soliton-like critical element}
For the case of a soliton-like critical element, i.e., $N(t) \equiv 1$,  the rigidity argument in Section~\ref{sec rigid} will require that the trajectory  $\vec u(t)$ is pre-compact in $\dot{H}^{1} \times L^{2}(\R^{3})$ rather that just uniformly bounded in this norm, in time. This is not hard to do given our work in the previous sections. 

Let $\vec u(t)$ be as in Proposition~\ref{crit element} and assume that $N(t)  = 1$ for all $t \in [0, \infty)$. Then, without loss of generality, we can assume that $I_{max}(  \vec u) = \R$ and we have $N(t) \equiv 1$ for all $t \in \R$. Indeed, let $t_n \to \infty$ be any sequence. Since $\vec u(t)$ has the compactness property on $(T_-( \vec u), \infty)$ we can find a subsequence, still denoted by $t_n$ so that $\vec u(t_n)  \to  \vec u_{\infty}$ in $\dot{H}^{\frac{1}{2}} \times \dot{H}^{-\frac{1}{2}}$. Then, using the perturbation theory, one can readily check   that the solution $\vec u_{\infty}(t)$ with initial data $\vec u_{\infty}(0) =  \vec u_{\infty}$ is global-in-time, and has the compactness property on $\R$ with $N(t) =1$ for all $t \in \R$. 

We can now establish the following proposition. 
\begin{prop}\label{h2 bound} Let $\vec u(t)$ be the critical element and assume further that $\vec u(t)$ is soliton-like, i.e., $\vec u(t)$ is defined globally-in-time and $N(t) \equiv 1$. Then the trajectory 
\EQ{
K:= \{ \vec u(t) \mid t \in \R\}
}
is pre-compact in $(\dot{H}^{\frac{1}{2}} \times \dot H^{-\frac{1}{2}}) \cap (\dot H^1 \times L^2)(\R^3)$. 
\end{prop}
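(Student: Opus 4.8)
The proof has one serious ingredient — additional regularity for a soliton-like critical element — followed by a soft compactness argument.

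\emph{Step 1: extra regularity.} I would first show that there is a $\sigma_0>0$ with $\sup_{t\in\R}\|\vec u(t)\|_{\dot{H}^{1+\sigma_0}\times\dot{H}^{\sigma_0}(\R^3)}<\infty$; in fact one can reach a uniform $\dot{H}^2\times\dot{H}^1$ bound. This is obtained by iterating the double-Duhamel / frequency-envelope scheme of Section~\ref{dd}. The situation is strictly simpler here than in Theorem~\ref{t4.1} because $N(t)\equiv1$: every power of $N(t)$ that appears becomes a harmless fixed constant and one only needs uniform-in-$t$ estimates. Starting from the conclusion $\vec u(t)\in\dot{H}^1\times L^2$ of Theorem~\ref{t4.1} and rerunning the argument of Proposition~\ref{h23} — with the radial Sobolev embedding of Lemma~\ref{lem rad se} and the sharp Huygens principle used exactly as in~\eqref{4.63}--\eqref{4.76} — one gains a fixed fraction of a derivative; iterating finitely many times one passes the threshold $\dot{H}^{3/2}\times\dot{H}^{1/2}$, past which $u(t)\in L^\infty_x$ uniformly and the fractional Leibniz bound $\||\nabla|^s(u^3)\|_{L^2}\lesssim\|u\|_{L^\infty_x}^2\|u\|_{\dot{H}^s}$ lets the bootstrap close all the way up to $\dot{H}^2\times\dot{H}^1$.

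\emph{Step 2: pre-compactness.} Since $\dot{H}^{1/2}\times\dot{H}^{-1/2}$ and $\dot{H}^1\times L^2$ both embed continuously into $\mathcal{S}'(\R^3)\times\mathcal{S}'(\R^3)$, a set that is pre-compact in each of them is pre-compact in their intersection (given a sequence, extract a subsequence converging in the first space, then a further subsequence converging in the second; the two limits agree as distributions). Because $K$ is already pre-compact in $\dot{H}^{1/2}\times\dot{H}^{-1/2}$ by Proposition~\ref{crit element} (with $N(t)\equiv1$ there is no modulation), it suffices to show $K$ is pre-compact in $\dot{H}^1\times L^2$. Let $t_n$ be a sequence; after passing to a subsequence, $\vec u(t_n)\to\vec v$ in $\dot{H}^{1/2}\times\dot{H}^{-1/2}$, and by the Step 1 bound together with uniqueness of distributional limits we get $\vec v\in\dot{H}^{1+\sigma_0}\times\dot{H}^{\sigma_0}$. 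For every $R>0$, with $P_{\le R}$ and $P_{>R}$ the Littlewood--Paley truncations to frequencies $\le R$ and $>R$,
\EQ{
\|\vec u(t_n)-\vec v\|_{\dot{H}^1\times L^2} &\le \|P_{\le R}(\vec u(t_n)-\vec v)\|_{\dot{H}^1\times L^2}\\
&\quad + \|P_{>R}\vec u(t_n)\|_{\dot{H}^1\times L^2} + \|P_{>R}\vec v\|_{\dot{H}^1\times L^2}.
}
By Bernstein's inequalities $P_{\le R}$ maps $\dot{H}^{1/2}\times\dot{H}^{-1/2}$ boundedly into $\dot{H}^1\times L^2$, so for fixed $R$ the first term tends to $0$ as $n\to\infty$; and $\|P_{>R}f\|_{\dot{H}^1\times L^2}\lesssim R^{-\sigma_0}\|f\|_{\dot{H}^{1+\sigma_0}\times\dot{H}^{\sigma_0}}$, so the last two terms are $\lesssim R^{-\sigma_0}$ uniformly in $n$. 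Letting $n\to\infty$ and then $R\to\infty$ gives $\vec u(t_n)\to\vec v$ strongly in $\dot{H}^1\times L^2$, which is the desired pre-compactness; combined with pre-compactness in $\dot{H}^{1/2}\times\dot{H}^{-1/2}$, this yields pre-compactness in $(\dot{H}^{1/2}\times\dot{H}^{-1/2})\cap(\dot{H}^1\times L^2)$.

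\emph{Main obstacle.} The genuinely delicate step is the first fractional gain of regularity above $\dot{H}^1\times L^2$ in Step 1: for the cubic power the nonlinearity is exactly borderline at this regularity level, so the gain has to be squeezed out of the radial refined-Strichartz and frequency-envelope estimates of Section~\ref{dd} — this is precisely where the restriction $p\le3$ and the radial hypothesis enter. Once any positive $\sigma_0$ is in hand, both the remaining bootstrap to $\dot{H}^2\times\dot{H}^1$ and the Step 2 compactness argument are routine.
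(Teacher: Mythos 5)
Your Step 2 is sound, and it is essentially the paper's own soft argument in a different guise: the paper deduces pre-compactness in $\dot H^{1}\times L^{2}$ by interpolating $\|\cdot\|_{\dot H^{1}\times L^{2}}\lesssim\|\cdot\|_{\dot H^{1/2}\times\dot H^{-1/2}}^{2/3}\|\cdot\|_{\dot H^{2}\times\dot H^{1}}^{1/3}$ against the $\dot H^{1/2}\times\dot H^{-1/2}$ compactness, which does the same work as your Littlewood--Paley splitting, and you are right (as is noted in the remark following Claim~\ref{endpoint claim}) that any uniform $\dot H^{1+\sigma_0}\times\dot H^{\sigma_0}$ bound would suffice.

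The gap is in Step 1, precisely in the window between $\dot H^{1}$ and $\dot H^{3/2}$. You assert that rerunning Proposition~\ref{h23} ``exactly as in \eqref{4.63}--\eqref{4.76}'' gains a fixed fraction of a derivative and can be iterated past $3/2$, but those estimates do not extend verbatim above the level for which they were built: the exterior estimate \eqref{4.63} is proved only for $s_0\in(0,\tfrac12]$, its proof places no derivatives on $u$ at all (it runs through $\|(1-\chi)u^3\|_{\dot H^{-1/2+s_0}}\lesssim\|(1-\chi)u^3\|_{L^p}$ plus radial Sobolev and only the $\dot H^{1/2}$ norm of $u$), so it cannot produce information beyond the $\dot H^{1}$ level for $v$. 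To go higher one must put (fractional) derivatives on $(1-\chi)(x/c|t|)\,u^{3}$, control the commutators with the cutoff, and redo the kernel/Huygens bounds \eqref{4.76.1}--\eqref{4.78} with the new weights; whether each such pass really gains a fixed amount in the regime $1<s\le 3/2$, where $u$ is not yet pointwise bounded, is exactly the point at issue, and your proposal asserts it rather than proves it. Note also the near-circularity: your scheme obtains $u\in L^{\infty}_x$ only after passing $3/2$, whereas pointwise-type control of $u$ is what is needed to place derivatives on the nonlinearity and make those gains in the first place. The paper's proof bypasses the iteration entirely: from the uniform $\dot H^{1}\times L^{2}$ bound of Theorem~\ref{t4.1} and the radial endpoint Strichartz estimate it first proves Claim~\ref{endpoint claim}, the uniform local-in-time bound \eqref{endpoint} in $L^{2}_tL^{\infty}_x$; this immediately gives $\|\nabla(u^{3})\|_{L^{1}_tL^{2}_x}\lesssim 1$ on unit-length intervals as in \eqref{0 to de}, and then a single further run of the double-Duhamel argument at the $\dot H^{2}$ level---with \eqref{0 to de}, \eqref{de to inf} (via \eqref{6.3}) and \eqref{sharp h} playing the roles of \eqref{4.12}, \eqref{4.19} and \eqref{4.23}---yields the uniform $\dot H^{2}\times\dot H^{1}$ bound in one step. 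That local $L^{2}_tL^{\infty}_x$ estimate (or some substitute giving pointwise-type control already at the $\dot H^{1}$ level) is the missing idea in your Step 1.
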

\begin{proof} We prove that in fact we have a uniform-in-time bound on the $\dot H^2 \times \dot H^1$ norm of $\vec u(t)$. We only provide a sketch of this fact as the proof is nearly identical to the proof of Theorem~\ref{t4.1}. The pre-compactness of $\{\vec u(t) \mid t \in \R\}$ in $\dot{H}^1 \times L^2$ then follows from its  pre-compactness in $\dot H^{\frac{1}{2}}\times \dot H^{-\frac{1}{2}}$  and interpolation as we have 
\ant{
\| \vec u(t) \|_{\dot{H}^1 \times L^2 } \lesssim   \| \vec u(t)\|_{\dot H^{\frac{1}{2}}\times \dot H^{-\frac{1}{2}}}^{\frac{2}{3}} \| \vec u(t) \|_{ \dot H^2 \times \dot H^1}^{\frac{1}{3}}
}
First note that by Theorem~\ref{t4.1} we have 
\EQ{ \label{h1bound}
\| \vec u(t) \|_{\dot{H}^1 \times L^2 } \lesssim 1
}
\begin{claim} \label{endpoint claim} There exists a $\de>0$ so that for all $t_0 \in R$ and for $J:=(t_0- \de, t_0+ \de)$  we have 
\EQ{ \label{endpoint}
\| u\|_{L^2_tL^{\infty}_x(J \times \R^3)}   \lesssim 1
}
\end{claim}

\begin{rem} In~\eqref{endpoint} we make use of the endpoint $L^{2}_tL^{\infty}_x$ Strichartz estimate, which is valid in the radial setting, see~\cite{KlaMac93}. However, this use of the endpoint is for convenience only, as it will allow for an upgrade of the uniform bound in $\dot{H}^1 \times L^2$ directly to a uniform bound in $\dot{H}^2 \times \dot{H}^1$. This implies that the trajectory is pre-compact in $\dot{H}^1 \times L^2$ using the pre-compactness in $\dot{H}^{\frac{1}{2}} \times \dot{H}^{-\frac{1}{2}}$ and interpolating with the $\dot{H}^2 \times \dot H^1$ bound. As we are only interested in proving the compactness property in $\dot{H}^1 \times L^2$ it would also suffice to prove a uniform bound in $\dot{H}^{1+ \e} \times \dot{H}^{\e}$, and for this estimate we would not need the endpoint Strichartz estimate. 
\end{rem}
\begin{proof}[Proof of Claim~\ref{endpoint claim}]
First we note that it suffices to prove the claim for $t_0 =0$. We apply the endpoint Strichartz estimates, which are valid in the radial setting. Indeed, denote by $Z(J)$ the space $Z(J):=L^{\infty}_t (J;  \dot{H}^1 \times L^2) \cap L^2_t(J; L^{\infty}_x)$. Then we have 
\EQ{
\| u\|_{Z(J)} &\lesssim  \| \vec u(0)\|_{\dot{H}^1 \times L^2} + \| u^3\|_{L^1_t(J; L^2_x)}  
 \lesssim \| \vec u(0)\|_{\dot{H}^1 \times L^2} + \| u\|_{L^3_t(J; L^6_x)}^3\\
& \lesssim  \| \vec u(0)\|_{\dot{H}^1 \times L^2} +  \de \| u\|_{L^{\infty}_t(J; \dot{H}^1_x)}^3 \lesssim \| \vec u(0)\|_{\dot{H}^1 \times L^2} +  \de \| u\|_{Z(J)}
}
where we remark that we have used the Sobolev inequality and the length of $J$ in the third inequality above, and nothing else. In the last inequality we have used~\eqref{h1bound}. Choosing $\de = \de( \| \vec u(0)\|_{\dot{H}^1 \times L^2}) >0$ small enough completes the proof. Note that here it is important that the constant in~\eqref{h1bound} is uniform in $t_0 \in I$. 
\end{proof}
The proof of Proposition~\ref{h2 bound} now proceeds exactly as in the proof of Theorem~\ref{t4.1} except here we seek an $\dot{H}^2$ bound. We give a brief sketch. Let $\vec v(t)$ be defined as in~\eqref{4.7}, and $Q_M$ as in~\eqref{QM def}. We prove that 
 \EQ{
 \ang{ Q_M v(t_0), Q_M v(t_0)}_{\dot{H}^2} \lesssim 1
 } 
 for all $M \ge M_0$ with a constant that is uniform in $M$ and in $t_0 \in \R$. Extracting weak limits using Lemma~\ref{lem weak} as in the proof of Theorem~\ref{t4.1}, and we note that it will suffice to prove the following estimate for the ``double Duhamel" term: 
\EQ{
\abs{ \ang{ Q_M \bigg( \int_{T_1}^0  e^{i t  \sqrt{- \Delta}}  \na (u^3)(t) \, dt \bigg) , Q_M \bigg( \int^{T_2}_0  e^{i \tau  \sqrt{- \Delta}}  \na (u^3)(\tau) \, d\tau \bigg)}_{L^2}} \lesssim 1
}
where $T_1<0$ and $T_2>0$ and the constant above is uniform in such $T_1, T_2$. Note also that above we have set $t_0 = 0$ as again this case will be sufficient.   

By~\eqref{endpoint} we see that for $\de>0$ as in Claim~\ref{endpoint claim} we have 
\EQ{ \label{0 to de}
\left\| Q_M \bigg( \int^{\de}_0  e^{i \tau  \sqrt{- \Delta}}  \na (u^3)(\tau) \, d\tau  \bigg)\right\|_{L^2} &\lesssim  \int_0^{\de}  \| \na (u^3)\|_{L^2}\\
& \lesssim \| \na u\|_{L^{\infty}_tL^2} \| u\|_{L^{2}_t([0, \de); L^{\infty}_x)}^2 \lesssim 1
}
Next, by the radial Sobolev embedding, $\| |x|^{3/4} u \|_{L_{x}^{\infty}(\R^{3})} \lesssim \| u \|_{\dot{H}^{3/4}(\R^{3})}$, we have 
\begin{equation}\label{6.3}
\left\| (1 - \chi)(\frac{x}{c |t|}) \nabla u^{3}(t) \right\|_{L^2} \lesssim \frac{1}{c^{3/2} |t|^{3/2}} \| \nabla u \|_{L^{\infty}_t L^{2}(\R^{3})} \| u(t) \|_{\dot{H}^{3/4}}^{2} \lesssim  \abs{t}^{-\frac{3}{2}},
\end{equation}
where $\chi \in C^{\infty}_0(\R^3)$, radial, satisfies $\chi(x) =1$ for $\abs{x} \le 1$ and $\chi(x) = 0 $ for $\abs{x}  \ge 2$, and $c= 1/4$. Therefore we have 
\begin{multline} \label{de to inf}
\left\| Q_M \bigg( \int^{T_2}_\de  e^{i \tau  \sqrt{- \Delta}} (1- \chi(\frac{\cdot}{c \abs{\tau}})) \na (u^3)(\tau) \, d\tau  \bigg)\right\|_{L^2} \lesssim\\
\lesssim \int^{\infty}_\de \|(1- \chi(\frac{\cdot}{c \abs{\tau}})) \na (u^3)(\tau)\|_{L^2} \lesssim  \de^{-1/2}. 
\end{multline}
 Next, using the sharp Hyugens principle exactly as in the proof of~\eqref{4.23} the term 
 \EQ{ \label{sharp h}
&\ang{ Q_M \bigg(   e^{i t  \sqrt{- \Delta}}   \chi( \frac{\cdot}{ c \abs{t}})\na (u^3)(t) \, dt \bigg) , Q_M \bigg(   e^{i \tau  \sqrt{- \Delta}}   \chi(\frac{\cdot}{c \abs{\tau}})\na (u^3)(\tau) \, d\tau \bigg)}\\ 
 &=\ang{ Q_M \bigg(    \chi( \frac{\cdot}{ c \abs{t}})\na (u^3)(t) \, dt \bigg) , Q_M \bigg(   e^{i (\tau-t)  \sqrt{- \Delta}}   \chi(\frac{\cdot}{c \abs{\tau}})\na (u^3)(\tau) \, d\tau \bigg)} \\
 }
 is identically $=0$  for $t < -\de$ and $\tau> \de$.  With~\eqref{0 to de},~\eqref{de to inf}, and~\eqref{sharp h} playing the roles of~\eqref{4.12},~\eqref{4.19}, and~\eqref{4.23}, the proof now proceeds exactly as the proof of Theorem~\ref{t4.1}. We omit the details. 
\end{proof}

\section{Rigidity via a Virial Identity}\label{sec rigid}
In this section we complete the rigidity argument by proving that a soliton-like critical element, (i.e., $N(t) \equiv 1$) cannot exist. Indeed we prove the following proposition: 
\begin{prop} \label{rig prop}
Let $\vec u(t) \in ( \dot{H}^{\frac{1}{2}} \times \dot{H}^{-\frac{1}{2}} )\cap (\dot{H}^1 \times L^2)( \R^3)$ be a global-in-time solution to~\eqref{u eq} such that the trajectory 
\EQ{
K := \{ \vec u(t) \mid t \in \R\}  
}
is pre-compact in $( \dot{H}^{\frac{1}{2}} \times \dot{H}^{-\frac{1}{2}} )\cap (\dot{H}^1 \times L^2)( \R^3)$. Then $u(t) \equiv 0$. 
\end{prop}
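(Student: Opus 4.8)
The plan is to run the Kenig--Merle rigidity argument with a truncated virial identity. The structural input is that, by (the proof of) Proposition~\ref{h2 bound}, the trajectory $K$ is \emph{pre-compact} in $(\dot H^{\frac12}\times\dot H^{-\frac12})\cap(\dot H^1\times L^2)$ and the solution enjoys extra regularity, so that the virial quantities below are well defined and their error terms are uniformly small in $t$. First I would record the elementary dichotomy: either $u\equiv0$, or $\inf_{t\in\R}\|\vec u(t)\|_{\dot H^1\times L^2}=:c_0>0$. Indeed, if $\|\vec u(t_n)\|_{\dot H^1\times L^2}\to0$ along some sequence, then by pre-compactness in $\dot H^{\frac12}\times\dot H^{-\frac12}$ a subsequence converges there to some $\vec w$; comparing distributional limits forces $\vec w=0$, so $\|\vec u(t_n)\|_{\dot H^{\frac12}\times\dot H^{-\frac12}}\to0$, and then Proposition~\ref{small data} applied to the time-translated solution $t\mapsto\vec u(t+t_n)$ gives $\|u\|_{L^4_{t,x}(\R)}=\|u(\cdot+t_n)\|_{L^4_{t,x}(\R)}\lesssim\|\vec u(t_n)\|_{\dot H^{\frac12}\times\dot H^{-\frac12}}\to0$, whence $u\equiv0$. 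So from now on I assume $c_0>0$ and aim for a contradiction.

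Next I would set up the truncated virial functional (truncation is forced because $\int u\,u_t$ need not converge absolutely for $\dot H^1$ data). Fix a radial $\phi\in C_0^\infty(\R^3)$ with $\phi\equiv1$ on $\{|x|\le1\}$, $\supp\phi\subset\{|x|\le2\}$, put $\phi_R(x):=\phi(x/R)$, and set
\[
V_R(t):=\int_{\R^3}\phi_R(x)\Big(x\cdot\nabla u(t,x)+\tfrac34\,u(t,x)\Big)u_t(t,x)\,dx .
\]
Since $\|\phi_R u(t)\|_{L^2}\lesssim R\|u(t)\|_{L^6}\lesssim R$ and $\|\phi_R\,x\cdot\nabla u(t)\|_{L^2}\lesssim R\|\nabla u(t)\|_{L^2}\lesssim R$, one has the uniform bound $\sup_t|V_R(t)|\lesssim R$. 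Using $\nabla\cdot x=3$ together with $u_{tt}=\Delta u\mp u^3$, a direct integration by parts gives
\[
\frac{d}{dt}V_R(t)=-\frac34\int\phi_R\,u_t^2-\frac14\int\phi_R\,|\nabla u|^2+\mathrm{Err}_R(t),
\]
where every commutator/boundary term in $\mathrm{Err}_R$ is supported in the annulus $\{R\le|x|\le2R\}$. The point of the coefficient $\tfrac34=\tfrac{3}{p+1}$ with $p=3$ is that it makes the $u^4$-contributions cancel \emph{identically} — so the identity is insensitive to the focusing/defocusing sign — and the surviving quadratic form is negative definite precisely because $p=3<5$. The error is bounded by
\[
|\mathrm{Err}_R(t)|\lesssim\int_{|x|\ge R}\!\big(u_t^2+|\nabla u|^2+u^4\big)\,dx+\|u(t)\|_{L^6(|x|\ge R)}^2+\|u(t)\|_{L^6(|x|\ge R)}\|\nabla u(t)\|_{L^2},
\]
and by pre-compactness of $K$ together with the Sobolev embeddings $\dot H^1\hookrightarrow L^6$ and $\dot H^{\frac12}\hookrightarrow L^3$ this gives $\sup_t|\mathrm{Err}_R(t)|\le\omega(R)$ with $\omega(R)\to0$ as $R\to\infty$.

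To finish, estimate $-\tfrac34\int\phi_R u_t^2-\tfrac14\int\phi_R|\nabla u|^2\le-\tfrac14\|\vec u(t)\|_{\dot H^1\times L^2}^2+\tfrac14\omega(R)\le-\tfrac14 c_0^2+\tfrac14\omega(R)$, so that choosing $R$ large enough that $\omega(R)<\tfrac18 c_0^2$ forces $\frac{d}{dt}V_R(t)\le-\tfrac18 c_0^2$ for all $t$. Integrating from $0$ to $T$ then sends $V_R(T)\to-\infty$ as $T\to\infty$, contradicting $\sup_t|V_R(t)|\lesssim R<\infty$. Hence $u\equiv0$.

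I expect the only delicate points to be bookkeeping rather than conceptual. The first is justifying the virial computation for a solution only known a priori to lie in $\dot H^1\times L^2$: differentiating $V_R$ in $t$ and integrating by parts is legitimate here because the extra regularity from the proof of Proposition~\ref{h2 bound} (uniform $\dot H^2\times\dot H^1$ bounds) — or, alternatively, a routine mollification/approximation argument — places $u^3$ and $u_{tt}$ in the right spaces; this should be written out carefully. The second, and the place where pre-compactness of $K$ (rather than mere uniform boundedness) is essential, is checking that \emph{every} term in $\mathrm{Err}_R$ is small \emph{uniformly in $t\in\R$}: this is exactly the uniform-tails statement for the pre-compact set $K$, combined with the Sobolev inequalities above.
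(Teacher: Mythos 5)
Your argument is correct, but it takes a genuinely different route from the paper's. The paper uses the multiplier $\chi_R\,(u+ru_r)$ (i.e.\ coefficient $1$ on $u$), for which the virial identity produces exactly $-E(\vec u)$ plus annulus errors; it then \emph{averages in time}, sends $R=T\to\infty$ using the uniform-in-$t$ tail smallness from pre-compactness and the crude $O(1)$ bounds on the boundary terms, and concludes $E(\vec u)\le 0$. For the defocusing sign this kills $u$ immediately, while for the focusing sign the paper must invoke Proposition~\ref{neg blow up} (the Killip--Stovall--Visan negative-energy blow-up result) together with globality. You instead take the multiplier $x\cdot\nabla u+\tfrac34 u$, i.e.\ the coefficient $3/(p+1)$ with $p=3$, which cancels the $u^4$ term identically and leaves the sign-independent, negative-definite form $-\tfrac34\int u_t^2-\tfrac14\int|\nabla u|^2$; combined with your dichotomy (either $\|\vec u(t_n)\|_{\dot H^1\times L^2}\to0$ along a sequence, handled by pre-compactness plus the small-data theory of Proposition~\ref{small data}, or $\inf_t\|\vec u(t)\|_{\dot H^1\times L^2}>0$), you get strict monotonicity of $V_R$ at a \emph{fixed} large $R$, contradicting $\sup_t|V_R|\lesssim R$. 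What your version buys: no appeal to the conserved energy, no appeal to Proposition~\ref{neg blow up}, and a uniform treatment of both signs; it is essentially the quantitative counterpart of the paper's Remark~\ref{r:vir}, shifting the constant so the potential term disappears rather than assembling $-E$. What the paper's version buys: the multiplier with coefficient $1$ yields the energy itself, which is the form that extends verbatim through the whole range $1+\sqrt2<p\le3$ discussed in the remark, and the time-averaging removes the need for your lower-bound dichotomy. Two small points to tidy in a write-up: your justification of the integration by parts should not lean on ``the proof of Proposition~\ref{h2 bound}'', since the proposition as stated only assumes pre-compactness at the $\dot H^{1/2}\times\dot H^{-1/2}$ and $\dot H^1\times L^2$ levels -- the mollification/approximation route you mention is the right fallback (the paper is equally terse here); and the uniform smallness of the $L^4$ and $L^6$ tails should be stated as following from pre-compactness in $\dot H^{3/4}$ and $\dot H^1$ via Sobolev (interpolating the two components of the compactness hypothesis), exactly as the paper does for its $u^4$ tail.
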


The proposition will follow from a simple argument based on the following viral identity. In what follows we will fix a smooth radial cutoff function $\chi \in C^{\infty}_0( \R^3)$ so that $\chi(r)  \equiv 1$ for $0 \le r \le 1$, $\supp \chi \subset [0, 2]$, and $\abs{\chi'(r)} \le C$ for all $r > 0$. For each fixed $R>0$ we will denote by $\chi_R$ the rescaling 
\EQ{
\chi_R(r):= \chi( r/R)
}
\begin{lem}[virial identity] Let $\vec u(t) \in  \dot{H}^1 \times L^2(\R^3)$ be a solution to~\eqref{u eq}. Then for every $R>0$ we have 
 \begin{multline}\label{virial}
  \frac{d}{dt} \ang{ u_t \mid \chi_R(u + ru_r)} =    - E( \vec u)(t) + \int_0^{\infty}(1- \chi_R)\left(\frac{1}{2} u_t^2 + \frac{1}{2}u_r^2  \pm \frac{1}{4} u^4 \right) \, r^2 \, dr\\
  \quad -  \int_0^{\infty}  \left(\frac{1}{2} u_t^2 + \frac{1}{2}u_r^2  \pm \frac{1}{4} u^4 \right) \, r \chi'_R \, r^2 \, dr - \int_0^\infty u u_r \,  r \chi'_R \, r \, dr
   \end{multline}
where here the bracket $\ang{f \mid g}$ is the radial $L^2(\R^3)$ inner product, $$\ang{f \mid g}:= \int_0^{\infty}  f(r)g(r) \, r^2 \, dr.$$
\end{lem}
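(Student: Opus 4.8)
The plan is to derive the virial identity by a direct computation, differentiating the bracket $\ang{u_t \mid \chi_R(u+ru_r)}$ in time and substituting the equation of motion $u_{tt} = u_{rr} + \frac{2}{r}u_r \mp u^3$. First I would compute
\[
\frac{d}{dt}\ang{u_t \mid \chi_R(u+ru_r)} = \ang{u_{tt} \mid \chi_R(u+ru_r)} + \ang{u_t \mid \chi_R(u_t + r u_{rt})}.
\]
The second term is handled by noting $\ang{u_t \mid \chi_R(u_t + ru_{rt})} = \int_0^\infty \chi_R\big(u_t^2 + \tfrac{1}{2} r \partial_r(u_t^2)\big) r^2\,dr$, and integrating the $r\partial_r(u_t^2)$ piece by parts against $\chi_R r^3$; since $\partial_r(\chi_R r^3) = 3\chi_R r^2 + \chi_R' r^3$, this produces $\int_0^\infty \chi_R(u_t^2 - \tfrac{3}{2}u_t^2)r^2\,dr - \tfrac{1}{2}\int_0^\infty u_t^2 r\chi_R' r^2\,dr = -\tfrac{1}{2}\int_0^\infty \chi_R u_t^2 r^2\,dr - \tfrac{1}{2}\int_0^\infty u_t^2\, r\chi_R'\, r^2\,dr$.

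Next I would substitute $u_{tt} = u_{rr} + \tfrac{2}{r}u_r \mp u^3$ into $\ang{u_{tt} \mid \chi_R(u+ru_r)}$ and expand into the linear (wave) part and the nonlinear part. For the nonlinear part, $\mp\ang{u^3 \mid \chi_R(u+ru_r)} = \mp\int_0^\infty \chi_R\big(u^4 + \tfrac{1}{4}r\partial_r(u^4)\big)r^2\,dr$, and integrating by parts exactly as above gives $\mp\big(-\tfrac{1}{4}\int_0^\infty \chi_R u^4 r^2\,dr - \tfrac{1}{4}\int_0^\infty u^4\, r\chi_R'\, r^2\,dr\big)$. For the linear part, the standard radial identity $u_{rr} + \tfrac{2}{r}u_r = \tfrac{1}{r^2}\partial_r(r^2 u_r)$ lets me write $\ang{(u_{rr}+\tfrac{2}{r}u_r) \mid \chi_R(u+ru_r)} = \int_0^\infty \partial_r(r^2 u_r)\,\chi_R(u+ru_r)\,dr$; integrating by parts once and carefully collecting the terms where $\partial_r$ hits $r^2 u_r \cdot \chi_R$, $u$, and $ru_r$ produces $-\int_0^\infty \chi_R u_r^2 r^2\,dr + \tfrac{1}{2}\int_0^\infty \chi_R u_r^2 r^2\,dr - (\text{boundary-type }\chi_R'\text{ terms}) - \int_0^\infty u u_r\, r\chi_R'\, r\,dr$, where the last term is exactly the $-\int_0^\infty uu_r\, r\chi_R'\, r\,dr$ appearing in the statement. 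Assembling all pieces, the coefficients of $\chi_R u_t^2$, $\chi_R u_r^2$, $\chi_R u^4$ combine to $-\tfrac{1}{2}u_t^2 - \tfrac{1}{2}u_r^2 \mp \tfrac{1}{4}u^4$ integrated against $r^2\,dr$, which is $-E(\vec u)(t)$ plus the $(1-\chi_R)$ correction term (since $-\int_0^\infty(\tfrac{1}{2}u_t^2 + \tfrac{1}{2}u_r^2 \pm \tfrac{1}{4}u^4)r^2\,dr = -E(\vec u) $, and $-\chi_R(\cdots) = -(\cdots) + (1-\chi_R)(\cdots)$), while all the $\chi_R'$ terms assemble into the two remaining terms on the right-hand side of \eqref{virial}.

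The main obstacle, such as it is, is purely organizational: there are many integration-by-parts steps producing terms with the same structure but different numerical coefficients, and one must track the $\chi_R'$ (and in principle $\chi_R''$, though those cancel) terms carefully to see that they collapse into precisely $-\int_0^\infty(\tfrac{1}{2}u_t^2 + \tfrac{1}{2}u_r^2 \pm \tfrac{1}{4}u^4)r\chi_R' r^2\,dr - \int_0^\infty uu_r\, r\chi_R'\, r\,dr$ with no residue. One should also verify that all boundary terms at $r=0$ and $r=\infty$ vanish: at $r=0$ this follows from the $r^2$ and $r^3$ weights together with the regularity $\vec u(t)\in \dot H^1\times L^2$, and at $r=\infty$ from the compact support of $\chi_R$ (so in fact this is where compact support of the cutoff is essential). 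A standard density argument reduces the formal computation to smooth, spatially compactly supported data, for which all manipulations are rigorous, and then one passes to the limit using the $\dot H^1\times L^2$ bound. I would present the computation for such nice data and remark that the general case follows by approximation.
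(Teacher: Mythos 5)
Your strategy is exactly the paper's: the paper proves this lemma with the one-line remark that it follows from~\eqref{u eq} and integration by parts, and your proposal is a correct fleshing-out of that computation, including the vanishing of the boundary terms and the reduction to smooth compactly supported data by density.

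One concrete slip to fix, though. Your integrations by parts all follow the template $\int_0^\infty\chi_R\bigl(f+\alpha\, r\partial_r f\bigr)r^2\,dr=(1-3\alpha)\int_0^\infty\chi_R f\,r^2\,dr-\alpha\int_0^\infty f\,r\chi_R'\,r^2\,dr$. For $f=u_t^2$, $\alpha=\tfrac12$ this gives the coefficient $1-\tfrac32=-\tfrac12$ you wrote, but for $f=u^4$, $\alpha=\tfrac14$ it gives $1-\tfrac34=+\tfrac14$, not $-\tfrac14$: the cubic contribution is $\mp\bigl(\tfrac14\int_0^\infty\chi_R u^4 r^2\,dr-\tfrac14\int_0^\infty u^4\,r\chi_R'\,r^2\,dr\bigr)$, i.e.\ $\mp\tfrac14\int_0^\infty\chi_R u^4 r^2\,dr\pm\tfrac14\int_0^\infty u^4\,r\chi_R'\,r^2\,dr$. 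With the sign as you displayed it, the $\chi_R u^4$ piece carries the wrong sign to assemble with the $u_t^2$ and $u_r^2$ pieces into $-E(\vec u)$; your closing "assembling" sentence quotes the correct coefficient $\mp\tfrac14 u^4$, so the intermediate display is internally inconsistent and should be corrected. Relatedly, carrying the computation through shows the quartic piece of the $r\chi_R'$ error term appears as $\mp\tfrac14 u^4$, i.e.\ with the opposite sign to the $\pm\tfrac14 u^4$ displayed in~\eqref{virial}; this is immaterial for the rigidity argument, where that term is only ever estimated in absolute value over $R\le r\le 2R$, but it means you should not assert that the $\chi_R'$ terms collapse to the displayed form "with no residue" without noting the sign. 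Finally, a small point: no $\chi_R''$ terms ever arise, since $\chi_R$ is differentiated at most once in the whole computation, so there is nothing to cancel there.
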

\begin{proof}
The proof follows from the equation~\eqref{u eq}  and integration by parts. 
\end{proof}
\begin{rem} \label{r:vir}
In general for a semilinear equation of the form 
\ant{
u_{tt} - \Delta u = \pm  \abs{u}^{p-1} u
}
one has the following formal virial identity 
\begin{equation*}
\frac{d}{dt} \ang{ u_t \mid u + x  \cdot \na u}= - E(u) \pm ( \frac{p-3}{p + 1}) \| u \|_{L^{p + 1}}^{p + 1}.
\end{equation*}
Note that the right-hand-side can be bounded from above by a negative constant times the \emph{conserved} energy in the case $1+ \sqrt{2} <p \le 3$, yielding a monotone quantity. But in the case $3<p<5$ the right-hand-side cannot be controlled by the conserved energy in the case of the focusing equation. However, for the range $3 <p<5$ a different rigidity argument is available, based on the ``channels of energy method" developed by Duyckaerts, Kenig, and Merle,  \cite{DKM4, DKM5}. For an implementation of this strategy for the range $p \in (3, 5)$, see Shen, \cite{Shen}. 

We also note that the virial identities and the argument in this section also readily extend to the non-radial setting. 
\end{rem} 
The proof of Proposition~\ref{rig prop} will now following by applying the above lemma to our pre-compact trajectory $\vec u(t)$ in order to show that the energy must by non-positive. One concludes the proof by noting that a solution to the defocusing equation with non-positive energy must be identically zero. In the case of the focusing equation we recall Proposition~\ref{neg blow up} which says that a solution  with non-positive energy must either be identically zero or blow up in both time directions, and the latter is impossible under the hypothesis of Proposition~\ref{rig prop}. 
\begin{proof}[Proof of Proposition~\ref{rig prop}]
Fix $\eta>0$. We will show that for $\vec u(t)$ as in Proposition~\ref{rig prop} we have 
\EQ{
\E( \vec u) \le C\eta
}
for a fixed content $C$ which is independent of $\eta$. First, note that since $\{\vec u(t) \mid t \in \R\}$ is pre-compact in $( \dot{H}^{\frac{1}{2}} \times \dot{H}^{-\frac{1}{2}} )\cap (\dot{H}^1 \times L^2)( \R^3)$ we can find $R_0= R_0( \eta)$ so that for all $R \ge R_0$ and for all $t \in \R$ we have 
 \EQ{\label{H1 R}
 \int_R^{\infty} (u_t^2(t) + u_r^2(t)) \, r^2 \, dr\le  \eta.
}
Moreover, due to the embeddings $\dot{H}^{\frac{1}{2}} \cap \dot{H}^1 \hookrightarrow \dot{H}^{\frac{3}{4}} \hookrightarrow L^4$ we can choose $R_0(\eta)$ large enough so that we also have 
\EQ{
\int_R^{\infty} u^4(t) \, r^2 \, dr \le \eta 
}
for all $R \ge R_0$ and for all $t \in \R$. Finally, we note that for any $R>0$ and for any smooth radial function in $\dot{H}^1(\R^3)$ we have
\ant{
\int_R^{\infty} f^2(r) \, dr + R  f^2(R)  = - \int_R^{\infty} f_r(r) f(r) \, r \, dr,
}
which can be obtained by integrating by parts. This implies that 
\ant{
\int_R^{\infty} f^2(r) \, dr \le \int_R^{\infty} f_r^2(r) \, r^2 \, dr.
}
Therefore, for our pre-compact trajectory $\vec u(t)$ we can use~\eqref{H1 R} to obtain
\EQ{
\int_R^{\infty} u^2(t, r) \, 
dr  \le  \eta
}
for all $R \ge R_0(\eta)$ and for all $t \in \R$. Letting $R \ge R_0(\eta)$ we can apply these estimates to the last three terms on the right-hand side of~\eqref{virial}: 
\ant{
 &\abs{\int_0^{\infty}(1- \chi_R)\left(\frac{1}{2} u_t^2 + \frac{1}{2}u_r^2  \pm \frac{1}{4} u^4 \right) \, r^2 \, dr} \le C \eta \\
 &\abs{ \int_0^{\infty}  \left(\frac{1}{2} u_t^2 + \frac{1}{2}u_r^2  \pm \frac{1}{4} u^4 \right) \, r \chi'_R \, r^2 \, dr} \le C\int_R^{2R} \left(\frac{1}{2} u_t^2 + \frac{1}{2}u_r^2  + \frac{1}{4} u^4 \right) \, r^2 \, dr \le C \eta\\
 & \abs{\int_0^\infty u u_r \,  r \chi'_R \, r \, dr} \le \left( \int_R^{2R} u^2_r \, r^2 \, dr \right)^{\frac{1}{2}} \left( \int_R^{2R} u^2  \, dr \right)^{\frac{1}{2}}  \le C \eta.
 }
Inserting the above estimates into~\eqref{virial} and averaging in time from $0$ to $T$  and we obtain the estimate 
\EQ{
E(\vec u) &\le  C \eta +  \frac{1}{T} \abs{ \ang{ u_t(T) \mid \chi_R( u(T) + ru_r(T))}}\\
& \quad + \frac{1}{T}\abs{ \ang{ u_t(0) \mid \chi_R( u(0) + ru_r(0))}}.
}
Now, set $R=T$ above with $T$ large enough so that $T \gg R_0(\eta)$. We have 
\EQ{\label{E T}
E(\vec u) &\le C \eta +  C\frac{1}{T} \int_0^T \abs{u_t(T)} \abs{u(T)} \, r^2 \, dr + C \frac{1}{T} \int_0^T \abs{u_t(0)} \abs{u(0)} \, r^2 \, dr\\
& \quad + C\frac{1}{T} \int_0^T \abs{u_t(T)} \abs{u_r(T)} \, r^3 \, dr + C \frac{1}{T} \int_0^T \abs{u_t(0)} \abs{u_r(0)} \, r^3 \, dr.
}
We estimate the second and third terms on the right-hand-side of~\eqref{E T} by 
\ant{
 \frac{1}{T} \int_0^T \abs{u_t} \abs{u} \, r^2 \, dr  &\le \frac{1}{T}\left( \int_0^T u_t^2 \, r^2 \, dr\right)^{\frac{1}{2}} \left( \int_0^T \abs{u}^3 \, r^2 \, dr\right)^{\frac{1}{3}}  \left( \int_0^T  \, r^2 \, dr\right)^{\frac{1}{6}}\\
 &\le C \frac{1}{T^{\frac{1}{2}}} \| u_t\|_{L^2} \|u\|_{\dot{H}^{\frac{1}{2}}} \to 0 \mas T \to \infty
 }
where in the last line we have used that embedding $\dot{H}^{\frac{1}{2}} \hookrightarrow L^3$,  the fact that the critical element $\vec u(t)$ satisfies  $ \sup_{t \in \R} \|u(t)\|_{\dot H^{\frac{1}{2}}} \lesssim 1$, and   $\sup_{t \in \R} \| u_t \|_{L^2} \lesssim 1$. To estimate the fourth and fifth terms in~\eqref{E T} we note that for $T \gg R(\eta)$ we have 
\ant{
\frac{1}{T} \int_0^T \abs{u_t} \abs{u_r} \, r^3 \, dr  &\le \frac{1}{T} \int_0^{R(\eta)} \abs{u_t} \abs{u_r} \, r^3 \, dr  + \frac{1}{T} \int_{R(\eta)}^T \abs{u_t} \abs{u_r} \, r^3 \, dr \\
&\le  \frac{R(\eta)}{T} \|u_t\|_{L^2}\| u\|_{\dot{H}^1} + \left( \int_{R(\eta)}^T u_t^2 \, r^2 \, dr\right)^{\frac{1}{2}}\left( \int_{R(\eta)}^T u_r^2 \, r^2 \, dr\right)^{\frac{1}{2}}\\
&=  \eta +O(T^{-1}) \mas T \to \infty
}
Thus, letting  $T \to \infty$ in~\eqref{E T} we obtain 
\ant{
E( \vec u) \le C \eta,
}
as desired. Since this holds for all $\eta >0$ we can conclude that 
\EQ{ \label{E=0}
E(\vec u) \le 0.
}
In the case that $\vec u(t)$ is a solution to the de-focusing equation, we are done as we can conclude from~\eqref{E=0} that  $\vec u(t) \equiv 0$. In the case that $\vec u(t)$ is a solution to the focusing equation, we note that~\eqref{E=0} together with Proposition~\ref{neg blow up} imply that either $\vec u(t) \equiv 0$ or $\vec u(t)$ blows up in finite time in both time directions. However, the latter case is impossible as we have assumed that $\vec u(t)$ is global in time. This completes the proof of Proposition~\ref{rig prop}. 
\end{proof}

\section{Proof of Theorem~\ref{main}}

We provide a brief summary of the proof of Theorem~\ref{main}, which is now complete. We argue by contradiction. If Theorem~\ref{main} were false, we could, by Proposition~\ref{crit element},  find a critical element, i.e., a \emph{nonzero} solution $\vec u(t)$ to~\eqref{u eq} with the compactness property in $\dot{H}^{\frac{1}{2}} \times \dot H^{-\frac{1}{2}}$ on an open interval $I \ni0$ with scale $N(t)$. By the remarks following the statement of Proposition~\ref{crit element}, we can reduce to the case $I= (T_-, \infty)$ and $N(t) \le 1$ for $t \in [0, \infty)$. By Theorem~\ref{4.1} we then have 
\ant{
\| \vec u(t) \|_{\dot{H}^1 \times L^2} \lesssim N(t)^{\frac{1}{2}} \mfor t \in [0, \infty)
}
Then, since we are assuming $\vec u(t)$ is nonzero, by Section~\ref{cascade} we can conclude that $N(t) \equiv1$ for all $t \in [0, \infty)$. We can then ensure that $\vec u(t)$ is global-in-time with for all $t \in \R$ and the by Proposition~\ref{h2 bound} we know that $\vec u(t)$ has a pre-compact trajectory in $\dot{H}^{\frac{1}{2}} \times \dot H^{-\frac{1}{2} }\cap \dot{H}^1 \times L^2$. But then Proposition~\ref{rig prop} shows that $\vec u(t) \equiv 0$, which is a contradiction. 

\bibliographystyle{plain}
\bibliography{researchbib}

 \medskip

\centerline{\scshape Benjamin Dodson, Andrew Lawrie}
\smallskip
{\footnotesize
 \centerline{Department of Mathematics, The University of California, Berkeley}
\centerline{970 Evans Hall \#3840, Berkeley, CA 94720, U.S.A.}
\centerline{\email{ benjadod@berkeley.edu, alawrie@math.berkeley.edu}}
} 

\end{document}